\newtheorem{thm}{Theorem}[section]
\newtheorem{cor}[thm]{Corollary}
\newtheorem{lem}[thm]{Lemma}
\newtheorem{pro}[thm]{Proposition}
\newtheorem{obs}[thm]{Observation}
\newenvironment{ack}{\noindent{\bf Acknowledgments}}
\newcommand{\vol}{{\rm vol}}
\newcommand{\cs}{{\rm cs}}
\newcommand{\li}{{\rm Li}_2}
\newcommand{\imaginary}{{\rm Im}\,}
\newcommand{\olim}{\underset{N\rightarrow\infty}{\text{\rm o-lim}}}
\newcommand{\modulo}{~~({\rm mod}~\pi^2)}
\newcommand{\modulos}{~~({\rm mod}~4\pi^2)}
\newcommand{\modu}{~~({\rm mod}~2\pi i)}
\begin{document}

\title{Optimistic limits of the colored Jones polynomials
%\footnote{2000 Mathematics Subject Classification: Primary 57M27;Secondly 51M25, 58J28.}
}
\author{\sc Jinseok Cho and Jun Murakami}
\maketitle
\begin{abstract}
We show that the optimistic limits of the colored Jones polynomials of the hyperbolic knots
coincide with the optimistic limits of the Kashaev invariants modulo $4\pi^2$.
\end{abstract}

\section{Introduction}\label{ch1}
  \subsection{Preliminaries}
Kashaev conjectured the following relation in \cite{Kashaev97} :
  $$\vol(L)=2\pi \lim_{N\rightarrow\infty}\frac{\log|\langle L\rangle_N|}{N},$$
  where $L$ is a hyperbolic link, vol($L$) is the hyperbolic volume of $S^3-L$,
  and $\langle L\rangle_N$ is the $N$-th Kashaev invariant.
  After, a generalized conjecture was proposed in \cite{Murakami02} that
 $$i(\vol(L)+i\,\cs(L))\equiv 2\pi i\lim_{N\rightarrow\infty}\frac{\log\langle L\rangle_N}{N}\modulo,$$
 where cs($L$) is the Chern-Simons invariant of $S^3-L$ defined in \cite{Meyerhoff86}.

The calculation of the actual limit of the Kashaev invariant is very hard, and only several cases are known.
On the other hand, while proposing the conjecture, Kashaev used a formal approximation to predict
the actual limit. His formal approximation was formulated as {\it optimistic limit} by H. Murakami in \cite{Murakami00b}.
This method can be summarized in the following way.
First, we fix an expression of $\langle L\rangle_N$, then apply the following {\it formal substitution}
\begin{eqnarray}
(q)_k&\sim&\exp\left\{\frac{N}{2\pi i}\left(-\li(q^k)+\frac{\pi^2}{6}\right)\right\},\label{formalsubsti}\\
~(q^{-1})_k&\sim&\exp\left\{\frac{N}{2\pi i}\left(\li(q^{-k})-\frac{\pi^2}{6}\right)\right\},\nonumber\\
q^{kl}&\sim&\exp\left\{\frac{N}{2\pi i}\left(\log q^k \cdot\log q^l\right)\right\},\nonumber
\end{eqnarray}
to the expression, where $q=\exp(2\pi i/N)$, $\li(z)=-\int_0^z\frac{\log(1-t)}{t}dt$ for $z\in\mathbb{C}$,
$[k]$ is the residue of an integer $k$ modulo $N$, $(q)_k=\prod_{n=1}^{[k]}(1-q^n)$ and $(q)_0=1$.
Then by substituting each $q^k$ with a complex variable $z$, we obtain a potential function
$\exp\left\{\frac{N}{2\pi i}F(\ldots,z,\ldots)\right\}$.
Finally, let
$$F_0(\ldots,z,\ldots):=F-\sum_z \left(z\frac{\partial F}{\partial z}\right)\log z$$
and evaluate $F_0$ for an {\it appropriate solution} of the equations
$\left\{\exp\left(z\frac{\partial F}{\partial z}\right)=1\right\}$.
Then the resulting complex number is called {\it the optimistic limit}. 

%Although this number depends on the algebraic expression
%and the choice of proper values, by using proper method, we expect it is well-defined and becomes the actual limit 
%$\displaystyle 2\pi i\lim_{N\rightarrow\infty}\frac{\log\langle L\rangle_N}{N}$. 
%Therefore, we abuse the notation $\displaystyle 2\pi i\,\olim\frac{\log\langle L\rangle_N}{N}$ to denote the optimistic limit.
%(The exact definition will be done by introducing potential functions of knot diagrams below.)

For example, the optimistic limit of the Kashaev invariant of the $5_2$ knot was calculated
in \cite{Kashaev97} and \cite{Ohnuki05} as follows.
By the formal substitution,
$$\langle 5_2 \rangle_N=\sum_{k\leq l}\frac{(q)_l^2}{(q^{-1})_k}q^{-k(l+1)}
\sim\exp\left\{\frac{N}{2\pi i}\left(-2\li(q^l)-\li(\frac{1}{q^k})-\log q^l \log q^k+\frac{\pi^2}{2} \right)\right\}.$$
By substituting $z=q^l$ and $u=q^k$, we obtain
$$F(z,u)=-2\li(z)-\li(\frac{1}{u})-\log z \log u +\frac{\pi^2}{2},$$
and
$$F_0(z,u)=F(z,u)-\left(z\frac{\partial F}{\partial z}\right)\log z-\left(u\frac{\partial F}{\partial u}\right)\log u.$$
For the choice of a solution $(z_0,u_0)=(0.3376...-i\,0.5623...,~0.1226...+i\,0.7449...)$ of the equations
$\left\{\exp\left(z\frac{\partial F}{\partial z}\right)=1,~\exp\left(u\frac{\partial F}{\partial u}\right)=1\right\}$,
the optimistic limit becomes
$$%2\pi i\,\olim\frac{\log\langle 5_2\rangle_N}{N}=
F_0(z_0,u_0)=i\,(2.8281...-i\,3.0241...) 
\equiv i(\vol(5_2)+i\,\cs(5_2))\modulo.$$

As seen above, the optimistic limit depends on the expression and the choice of the solution,
so it is not well-defined. However, Yokota made a very useful way to determine 
the optimistic limit of a hyperbolic knot $K$ in \cite{YokotaPre} and \cite{Yokota10}
by defining a potential function $V(z_1,\ldots,z_g)$ of the knot diagram, 
which also comes from the formal substitution of certain expression of the Kashaev invariant $\langle K\rangle_N$
(the definition of $V(z_1,\ldots,z_g)$ will be given in Section \ref{ch31}).
As above, he also defined
$$V_0(z_1,\ldots,z_g):=V-\sum_{k=1}^g\left(z_k\frac{\partial V}{\partial z_k}\right)\log z_k$$
and $$\mathcal{H}_1:=\left\{\exp(z_k\frac{\partial V}{\partial z_k})=1~\vert ~k=1,\ldots,g\right\}.$$
After proving that $\mathcal{H}_1$ is the hyperbolicity equation of Yokota triangulation, 
he chose the geometric solution $\bold{z}^{(0)}=(z_1^{(0)},\ldots,z_g^{(0)})$ of $\mathcal{H}$
(Yokota triangulation will be discussed in Section \ref{ch21}. The hyperbolicity equation consists of edge relations and
the cusp conditions of a triangulation, and the geometric solution is the one which gives the hyperbolic structure of the triangulation.
Details are in Section \ref{ch4}).
Then he proved
\begin{equation}\label{Yokota}
V_0(\bold{z}^{(0)})\equiv i(\vol(K)+i\,\cs(K))\modulo
\end{equation}
in \cite{Yokota10}. Therefore, we denote
$$2\pi i\,\olim\frac{\log\langle K\rangle_N}{N}:=V_0(\bold{z}^{(0)})$$
and call it {\it the optimistic limit of the Kashaev invariant} $\langle K\rangle_N$.

To obtain (\ref{Yokota}), Yokota assumed several assumptions on the knot diagram 
and the existence of an essential solution of $\mathcal{H}_1$.
The assumptions on the diagram essentially mean to reduce redundant crossings of the diagram before finding the potential function $V$.
Exact statements are \textbf{Assumption 1.1--1.4.} and \textbf{Assumption 2.2.} in \cite{Yokota10}.
We remark that these assumptions are needed so that, after the collapsing process,
Yokota triangulation becomes a topological triangulation of the knot complement $S^3-K$
(see Section \ref{ch31} for details).

As mentioned before, the set of equations $\mathcal{H}_1$ becomes the hyperbolicity equation of Yokota triangulation.
Therefore, each solution ${\bold z}=(z_1,\ldots,z_g)$ of $\mathcal{H}_1$ determines the shape parameters of the ideal tetrahedra of the triangulation 
and the parameters are expressed by the ratios of $z_1,\ldots,z_g$ (details are in Section \ref{ch4}).
We call a solution $\bold z$ of $\mathcal{H}_1$ {\it essential} if no shape parameters are in $\{0,1,\infty\}$, which implies
no edges of the triangulation are homotopically nontrivial.
A well-known fact is that if the hyperbolicity equation has an essential solution, 
then there is a unique geometric solution ${\bold z}^{(0)}$ of $\mathcal{H}_1$ (for details, see Section 2.8 of \cite{Tillmann05}).
Therefore, to guarantee the existence of the geometric solution, Yokota assumed the existence of an essential solution.

On the other hand, it is proved in \cite{Murakami01a} that
$$J_L(N;\exp\frac{2\pi i}{N})=\langle L \rangle_N,$$
where $J_L(N;x)$ is the $N$-th colored Jones polynomial of the link $L$ with a complex variable $x$.
Therefore, it is natural to define the optimistic limit of the colored Jones polynomial
so that it gives the volume and the Chern-Simons invariant. Although it looks trivial,
due to the ambiguity of the optimistic limit, only few results are known.
It was numerically confirmed for few examples in \cite{Murakami02},
actually proved only for the volume part of two bridge links in \cite{Ohnuki05} and
for the Chern-Simons part of twist knots in \cite{Cho10a}.
In a nutshell, the purpose of this paper is to propose a general method to define the optimistic limit of the colored Jones polynomial
of a hyperbolic knot $K$ and to prove the following relation :
\begin{equation}\label{goal}
2\pi i\,\olim\frac{\log\langle K\rangle_N}{N}\equiv 2\pi i\,\olim\frac{\log J_K(N;\exp\frac{2\pi i}{N})}{N}\modulos.
\end{equation}

\subsection{Main result}

For a hyperbolic knot $K$, we define a potential function $W(w_1,\ldots, w_m)$ of a knot diagram in Section \ref{ch32},
which also comes from the formal substitution of certain expression of the colored Jones polynomial $J_L(N;\exp\frac{2\pi i}{N})$.
We define
$$W_0(w_1,\ldots,w_m):=W-\sum_{l=1}^m\left(w_l\frac{\partial W}{\partial w_l}\right)\log w_l,$$
and
$$\mathcal{H}_2:=\left\{\exp\left(w_l\frac{\partial W}{\partial w_l}\right)=1~\vert~ l=1,\ldots,m\right\}.$$
Also, we discuss Thurston triangulation of the knot complement $S^3-K$ in Section \ref{ch22}, which was introduced in \cite{Thurston99}.

\begin{pro}\label{prop11} 
For a hyperbolic knot $K$ with a fixed diagram, 
we assume the diagram satisfies \textbf{Assumption 1.1.--1.4.} and \textbf{Assumption 2.2.} in \cite{Yokota10}.
For the potential function $W(w_1,\ldots, w_m)$ of the diagram, $\mathcal{H}_2$ becomes the hyperbolicity equation of Thurston triangulation.
\end{pro}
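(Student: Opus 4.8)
The plan is to follow the template of Yokota's identification of $\mathcal{H}_1$ with the hyperbolicity equation of the Yokota triangulation, but to carry it out for the colored-Jones potential $W$ and the finer Thurston triangulation of Section \ref{ch22}. Once the explicit form of $W$ from Section \ref{ch32} and the combinatorial data of the Thurston triangulation are fixed, the proposition reduces to a direct computation: differentiate $W$ with respect to each $w_l$, exponentiate, and recognise the resulting equation $\exp(w_l\,\partial W/\partial w_l)=1$ as one of the gluing equations of the triangulation.

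First I would record the shape of $W$. Because $W$ is produced by the formal substitution (\ref{formalsubsti}) from an expression of $J_K(N;\exp\frac{2\pi i}{N})$, it is a sum of dilogarithm terms $\pm\li(\text{ratio of the }w)$ coming from the $(q)_k^{\pm1}$ factors, together with bilinear terms coming from the $q^{kl}$ factors. Using $w\frac{d}{dw}\li(w)=-\log(1-w)$, each dilogarithm term contributes to $w_l\,\partial W/\partial w_l$ a summand $\pm\log(1-\text{ratio})$, while each bilinear term contributes $\log$ of a monomial in the $w$'s. Hence $\exp(w_l\,\partial W/\partial w_l)$ is a product of factors of the form $(\text{ratio})$ and $(1-\text{ratio})$ raised to integer powers, and the central point is to read these ratios off as the shape parameters of the ideal tetrahedra.

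Next I would set up the dictionary between the variables and the geometry. In the Thurston decomposition each crossing contributes an ideal octahedron, subdivided into tetrahedra; to each tetrahedron I would attach a shape parameter expressed as a cross-ratio $(w_a-w_b)(w_c-w_d)/((w_a-w_d)(w_c-w_b))$ determined by the regions of the diagram meeting at that crossing. With this assignment the edge relation at an edge $e$ states that the product of the shape parameters around $e$ is $1$ with log-sum $2\pi i$, and the cusp condition states that the meridian and longitude holonomies are trivial. I would then verify, crossing by crossing and region by region, that the factors occurring in $\exp(w_l\,\partial W/\partial w_l)$ reassemble into exactly one such edge product or one such cusp holonomy, so that $\mathcal{H}_2$ coincides term for term with the full list of edge relations together with the cusp condition, up to the standard single dependency among the edge equations.

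The hard part will be the bookkeeping of this correspondence: one must check that the signs coming from $(q)_k$ versus $(q^{-1})_k$, the signs of the crossings, and the branches of the logarithms all conspire so that each derivative equation is a gluing equation modulo $2\pi i$, with neither spurious nor missing relations. A convenient way to control this is to exploit that the Yokota triangulation is obtained from the Thurston triangulation by a collapsing process, and to match the present computation against Yokota's already-established identification of $\mathcal{H}_1$: the extra variables of $W$ relative to $V$ should correspond precisely to the tetrahedra that the collapse removes. Checking that this subdivision is compatible with the formal substitution, and that the cusp condition is preserved rather than absorbed by the collapse, is the delicate step that separates the colored-Jones case from the Kashaev case.
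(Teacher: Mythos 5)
Your high-level strategy (differentiate $W$, exponentiate, and recognize the factors of $\exp(w_l\,\partial W/\partial w_l)$ as shape parameters) is indeed how the paper proceeds, but two of your key choices would break the argument. First, your dictionary is wrong for this potential function: in this paper the shape parameters of the Thurston tetrahedra are \emph{ratios} (and products of ratios) of the region variables --- e.g.\ $\frac{w_k}{w_j}$ on ${\rm D}_n{\rm E}_n$, $\left(\frac{w_l}{w_m}\right)^{-1}$ on ${\rm B}_n{\rm E}_n$, and $\left(\frac{w_kw_m}{w_jw_l}\right)^{-1}$ on the central tetrahedron ${\rm A}_n{\rm B}_n{\rm C}_n{\rm D}_n$ --- not cross-ratios $\frac{(w_a-w_b)(w_c-w_d)}{(w_a-w_d)(w_c-w_b)}$. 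Since $W$ is a sum of dilogarithms of quotients of the $w$'s, the factors appearing in $\exp(w_l\,\partial W/\partial w_l)$ are quotients $w_i/w_j$ and $1-(w_i/w_j)$; differences $w_a-w_b$ never occur, so with your cross-ratio assignment the crossing-by-crossing matching you propose cannot be carried out. Second, your claim that $\mathcal{H}_2$ coincides ``term for term with the full list of edge relations together with the cusp condition, up to the standard single dependency'' is impossible by counting: $\mathcal{H}_2$ has one equation per region variable, while the triangulation has many more edges. The actual mechanism is that the relations for all non-horizontal edges and the diagonal edges ${\rm A}_n{\rm C}_n$, ${\rm B}_n{\rm D}_n$ (the sets $\mathcal{B}\cup\mathcal{C}$) hold \emph{identically} in the $w$-parametrization --- this is the paper's Lemma \ref{lem41}, proved by cusp-diagram and annulus arguments --- that $\mathcal{H}_2$ supplies the relations of the horizontal edges except one, that the last one follows because the product of all edge relations equals $1$, and that one meridian cusp condition is automatic from the assignment rule. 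Your proposal contains no mechanism for these automatically-satisfied relations, and they are the real technical content of the proposition (including the delicate collapsed cases at the endpoints of $I$ and $J$).

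Your fallback --- controlling the bookkeeping by matching against Yokota's identification of $\mathcal{H}_1$ --- also rests on a misreading of the geometry and risks circularity. The Yokota triangulation is not obtained from the Thurston triangulation by collapsing; both arise from the \emph{same} collapsing applied to two different subdivisions of the octahedra (four versus five tetrahedra), and they are related by 4-5 and 3-2 moves (Observation \ref{obs}). Moreover, there is no direct substitution between the $z$-variables of $V$ and the $w$-variables of $W$; the correspondence goes through shape parameters, is only defined for essential solutions, and is established in Lemma \ref{lem12}, which in the paper comes \emph{after} (and presupposes) Proposition \ref{prop11}. So reducing the proposition to Yokota's result does not avoid the direct verification; it presumes the very correspondence that the proposition is needed to justify.
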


Proof of Proposition \ref{prop11} will be given in Section \ref{ch4}.

Each solution ${\bold w}=(w_1,\ldots,w_m)$ of $\mathcal{H}_2$ determines the shape parameters of the ideal tetrahedra of Thurston triangulation 
and the parameters are expressed by the ratios of $w_1,\ldots,w_m$ (details are in Section \ref{ch4}).
We call a solution $\bold w$ of $\mathcal{H}_2$ {\it essential} if no shape parameters are in $\{0,1,\infty\}$.
Comparing Yokota triangulation and Thurston triangulation, we obtain the following Lemma.

\begin{lem}\label{lem12} 
  For a hyperbolic knot $K$ with a fixed diagram and the assumptions of Proposition \ref{prop11},
 an essential solution $\bold{z}=(z_1,\ldots,z_g)$ of $\mathcal{H}_1$ determines 
  the unique solution $\bold{w}=(w_1,\ldots,w_m)$ of $\mathcal{H}_2$, and vice versa. 
  Furthermore, if the determined solution $\bold{w}$ is essential,
  then $\bold{w}$ also induces $\bold{z}$, and vice versa.
\end{lem}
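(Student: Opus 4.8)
The plan is to build an explicit dictionary between the two sets of variables from the geometric relation between the two triangulations, and then to check that this dictionary carries the edge relations and cusp conditions of one triangulation to those of the other. By Yokota's theorem that $\mathcal{H}_1$ is the hyperbolicity equation of Yokota triangulation and by Proposition \ref{prop11}, both $\mathcal{H}_1$ and $\mathcal{H}_2$ are hyperbolicity equations of triangulations of the \emph{same} manifold $S^3-K$. Hence a solution of either system assigns shape parameters to the ideal tetrahedra, and an essential solution assigns shapes avoiding $\{0,1,\infty\}$, that is, an honest (possibly incomplete) hyperbolic structure. The real content of the lemma is that these two structures determine one another.

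First I would recall from Section \ref{ch4} the formulas writing each shape parameter of Yokota triangulation as a ratio of the $z_i$, and each shape parameter of Thurston triangulation as a ratio of the $w_l$. Using the combinatorial comparison of the two triangulations in Sections \ref{ch21}--\ref{ch22} and \ref{ch4}, I would match each ideal tetrahedron of Thurston triangulation with the corresponding piece of Yokota triangulation; equating the two expressions for the shared shape parameters then yields explicit formulas expressing each $w_l$ as a monomial in $z_1,\ldots,z_g$, and conversely each $z_k$ as a monomial in $w_1,\ldots,w_m$. Substituting a given essential $\bold{z}$ into these formulas produces a candidate $\bold{w}$, and the explicitness of the formulas makes this determination unique.

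The core step is to verify that the candidate $\bold{w}$ actually satisfies $\mathcal{H}_2$. Here I would rewrite each quantity $\exp\left(w_l\frac{\partial W}{\partial w_l}\right)$ in terms of the edge relations and cusp conditions of Thurston triangulation and show that each such equation follows from the corresponding relations of Yokota triangulation. Equivalently, the shapes transferred from $\bold{z}$ already satisfy all gluing equations of Thurston triangulation, since they arise from a single hyperbolic structure; therefore every equation of $\mathcal{H}_2$ holds automatically. The reverse direction, starting from an essential $\bold{w}$ and producing a $\bold{z}$ solving $\mathcal{H}_1$, is entirely symmetric.

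The main obstacle, and the reason for the careful wording of the final sentence, is the asymmetry caused by the collapsing that relates the two triangulations: an essential $\bold{z}$ can yield a $\bold{w}$ in which some tetrahedron of Thurston triangulation is flat, so that a shape parameter lands in $\{0,1,\infty\}$ and $\bold{w}$ solves $\mathcal{H}_2$ without being essential. I would therefore establish the equivalence ``$\bold{w}$ induces $\bold{z}$'' only under the added hypothesis that $\bold{w}$ is itself essential, checking in that case that applying the reverse dictionary to $\bold{w}$ recovers exactly the shape parameters, hence the value, of $\bold{z}$, by uniqueness of the shapes that each triangulation assigns to a given hyperbolic structure. Controlling these possibly flat tetrahedra and confirming that the dictionary stays well defined on the essential locus in both directions is the step demanding the most care.
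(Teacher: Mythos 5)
Your overall strategy---transfer the shape parameters through the geometric correspondence between the two triangulations, then recover the region variables and check the gluing equations---is the same as the paper's, but the concrete dictionary you build it on does not exist, and that is a genuine gap. You claim that matching tetrahedra and ``equating the two expressions for the shared shape parameters'' yields each $w_l$ as a \emph{monomial} in $z_1,\ldots,z_g$ and conversely. However, for a non-collapsed octahedron the two subdivisions consist of entirely different tetrahedra (four around the axis ${\rm E}_n{\rm F}_n$ for Yokota, five for Thurston), so there are no shared tetrahedra and no shared shape parameters to equate. The correct relation, which the paper derives from the 4-5 move (and from the 3-2 move in the collapsed case), is the cross-ratio identity (\ref{eq20}) (resp.\ (\ref{eq30})): writing $t'=\frac{1}{1-t}$ and $t''=1-\frac{1}{t}$, one has $u_1=t_1't_4''$, $u_2=t_1't_2''$, $u_3=t_3't_2''$, $u_4=t_3't_4''$, $u_5=(t_1't_2''t_3't_4'')^{-1}$, and inversely $t_1=u_1''u_2''u_5'$, etc. Concretely, at a positive crossing $\frac{w_m}{w_j}=u_1=\left(1-\frac{z_d}{z_a}\right)\big/\left(1-\frac{z_b}{z_a}\right)$, a rational function of the $z$'s, not a monomial. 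Consequently the verification step you describe cannot be carried out as stated.

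This is not a cosmetic slip, because the non-monomial form of the transfer is exactly what makes the lemma and its sequel nontrivial. It is why an essential $\bold{z}$ may determine a non-essential $\bold{w}$: with essential $t_i$ the transferred parameters can never be $0$ or $\infty$, but $u_1=1$ occurs precisely when $t_1t_4=1$, which essentiality of $\bold{z}$ does not forbid---so the asymmetry you correctly flag in your last paragraph has a different origin than your dictionary suggests. It is also what feeds the dilogarithm identities of Lemma \ref{lem5} used to prove Theorem \ref{thm}. Finally, even with the correct relations, the transferred shapes only determine the \emph{ratios} of $w$-variables of adjacent regions; to obtain the unique solution $\bold{w}$ of $\mathcal{H}_2$ one still needs the propagation/normalization argument at the end of Section \ref{ch4} (starting from the normalized regions and using the hyperbolicity equations for well-definedness), which your appeal to ``the explicitness of the formulas'' does not supply. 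Your remaining observations---that the transferred shapes satisfy the gluing equations because they come from a single geometric realization of the octahedra, and that essentiality of $\bold{w}$ is needed for the reverse direction---are correct and agree with the paper.
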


Proof of Lemma \ref{lem12} will be given in Section \ref{ch5}. 
Although there is a possibility that an essential solution $\bold{z}$ of $\mathcal{H}_1$
determines a non-essential solution $\bold{w}$ of $\mathcal{H}_2$, we expect this not to happen in almost all cases
(this is discussed in Appendix \ref{app2}).
In this paper, we only consider the case when the determined solution $\bold{w}$ is essential.

%\begin{proof} Note that the shape parameters of the tetrahedra of the 
%Consider (Case 1) and (Case 2) in Observation \ref{obs}. 
%The latter induces the same tetrahedra, so it determines the same shape parameters.
%At (Case 1), if no horizontal edges are collapsed, 
%\end{proof}

\begin{thm}\label{thm}
For a hyperbolic knot $K$ with a fixed diagram, assume the assumptions of Proposition \ref{prop11}.
Let $V(z_1,\ldots,z_g)$ and $W(w_1,\ldots,w_m)$ be the potential functions of the knot diagram.
Also assume the hyperbolicity equation $\mathcal{H}_1$ has an essential solution $\bold{z}=(z_1,\ldots,z_g)$
and let $\bold{z}^{(0)}=(z_1^{(0)},\ldots,z_g^{(0)})$ be the geometric solution of $\mathcal{H}_1$.
From Lemma \ref{lem12}, let $\bold{w}=(w_1,\ldots,w_g)$ 
and $\bold{w}^{(0)}=(w_1^{(0)},\ldots,w_m^{(0)})$ be the corresponding solutions of $\mathcal{H}_2$ 
determined by $\bold{z}$ and by $\bold{z}^{(0)}$, respectively.
We also assume $\bold{w}$ and $\bold{w}^{(0)}$ are essential. 
Then
  \begin{enumerate}
    \item 
      $V_0(\bold{z})\equiv W_0(\bold{w})\modulos,$
    \item $\bold{w}^{(0)}$ is the geometric solution of $\mathcal{H}_2$ and
        \begin{equation*}
           W_0(\bold{w}^{(0)})\equiv i(\vol(K)+i\,\cs(K))\modulo.
         \end{equation*}
  \end{enumerate}
\end{thm}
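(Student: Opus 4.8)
The plan is to prove part (1) directly, by comparing the two potential functions through the variable correspondence furnished by Lemma \ref{lem12}, and then to deduce part (2) by combining part (1) with Yokota's relation (\ref{Yokota}) together with an identification of geometric solutions. For part (1), I would begin from the explicit dictionary between $\bold{z}$ and $\bold{w}$ supplied by the proof of Lemma \ref{lem12}, which arises from matching the shape parameters of Yokota triangulation and Thurston triangulation. Since both triangulate $S^3-K$, one is obtained from the other by a sequence of Pachner moves together with collapses of degenerate tetrahedra, and the essential point is to track how the $\li$-terms constituting $W$ transform into those constituting $V$. The governing identity is the five-term relation for $\li$, which realizes the $2$--$3$ move at the level of potential functions, supplemented by the two-term inversion and reflection relations that absorb the collapsed tetrahedra. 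After substituting the dictionary into $W_0(\bold{w})$ and applying these relations term by term, the result should reduce to $V_0(\bold{z})$ up to controlled constants.

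The delicate issue is the modulus $4\pi^2$. On a solution of $\mathcal{H}_1$ one has $z_k\frac{\partial V}{\partial z_k}\in 2\pi i\,\mathbb{Z}$, and likewise $w_l\frac{\partial W}{\partial w_l}\in 2\pi i\,\mathbb{Z}$ on a solution of $\mathcal{H}_2$; hence each Legendre-type term $\left(z_k\frac{\partial V}{\partial z_k}\right)\log z_k$ is sensitive to the branch of $\log z_k$ only through integer multiples of $(2\pi i)^2=-4\pi^2$. This is exactly the source of the stated ambiguity, and I would verify that once the five-term and two-term relations have been applied, the difference $V_0(\bold{z})-W_0(\bold{w})$ collapses precisely onto this $4\pi^2$-lattice, with no residual $\pi^2$ or $2\pi^2$ term surviving.

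Granting part (1), part (2) is short. Applying part (1) to the geometric solution gives $V_0(\bold{z}^{(0)})\equiv W_0(\bold{w}^{(0)})\modulos$; since $4\pi^2$ is a multiple of $\pi^2$, reducing this congruence and invoking (\ref{Yokota}) yields $W_0(\bold{w}^{(0)})\equiv i(\vol(K)+i\,\cs(K))\modulo$. To see that $\bold{w}^{(0)}$ is the geometric solution of $\mathcal{H}_2$, recall that $\bold{z}^{(0)}$ realizes the complete hyperbolic structure on Yokota triangulation; the shape parameters it assigns to Thurston triangulation through the correspondence of Lemma \ref{lem12} therefore realize the same complete structure, which by uniqueness of the discrete faithful representation (Mostow rigidity) must coincide with the geometric solution of $\mathcal{H}_2$.

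The main obstacle is clearly part (1), and specifically the bookkeeping needed to pin the ambiguity down to exactly $4\pi^2$. The geometric fact that both $V_0$ and $W_0$ compute the complex volume of $S^3-K$ is essentially forced by the triangulation comparison, but controlling the lattice of $2\pi i$-shifts in the logarithms—so that the five-term relation and the collapse of degenerate tetrahedra contribute only multiples of $4\pi^2$—is where the real work lies.
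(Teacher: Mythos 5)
Your overall architecture matches the paper's (part (1) via the shape-parameter dictionary of Lemma \ref{lem12}, part (2) by combining part (1) with Yokota's relation (\ref{Yokota}) and the geometric correspondence), and your part (2) is essentially the paper's argument. But part (1), which you correctly identify as the real content, has a genuine gap: the reduction of $W_0(\bold{w})$ to $V_0(\bold{z})$ is \emph{not} a local, term-by-term matter, and no application of the five-term relation ``term by term'' can make it one. First, the two triangulations are related by 4--5 moves and 3--2 moves on octahedra (Observation \ref{obs}), not by 2--3 Pachner moves, so the relevant analytic input is not the pentagon identity but the octahedron identities the paper proves as Lemma \ref{lem5} (equations (\ref{eq22})--(\ref{eq29})), whose proof goes through the Bloch--Wigner function for the imaginary part, then analyticity plus evaluation at an explicit point to fix the real constants. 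The precise logarithmic correction terms in those identities are exactly what your ``controlled constants'' would have to be, and they are not constants: they are bilinear expressions in $\log z$'s and $\log w$'s.

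Second, and more fundamentally, even with Lemma \ref{lem5} in hand, the difference of the two potentials at a single crossing $n$ does not vanish modulo $4\pi^2$: the paper computes a nonzero ``remaining term'' such as
\begin{equation*}
Z_n=-(\log w_j-\log w_m)\log z_a-(\log w_k-\log w_j)\log z_b+(\log w_k-\log w_l)\log z_c+(\log w_l-\log w_m)\log z_d,
\end{equation*}
with analogous expressions in all the collapsed and endpoint cases. The congruence $V_0(\bold{z})\equiv W_0(\bold{w})\modulos$ holds only because $\sum_n Z_n=0$ \emph{globally}: each contributing side $z_a$ is shared by exactly two crossings, once outgoing and once incoming, contributing coefficients $\mp(\log w_j-\log w_m)$ to $\log z_a$ that cancel in the sum over the diagram. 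Your proposal contains no mechanism of this kind; asserting that after local identities ``the difference collapses precisely onto the $4\pi^2$-lattice'' skips precisely this step. (Your observation that $z_k\frac{\partial V}{\partial z_k}\in 2\pi i\,\mathbb{Z}$ on solutions of $\mathcal{H}_1$ is correct and is indeed why branch changes cost only multiples of $4\pi^2$, in the spirit of Lemma \ref{lem1}; but it controls the ambiguity of each side separately, not the equality of the two sides.) To repair the argument you would need to (i) state and prove the octahedron-level identities adapted to the 4--5 and 3--2 moves with their exact log corrections, (ii) organize $V_0-W_0$ as a sum of per-crossing remaining terms, and (iii) prove the global cancellation over the diagram.
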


The proof is in Section \ref{ch5}. 
We denote 
$$2\pi i\,\olim\frac{\log J_K(N;\exp\frac{2\pi i}{N})}{N}:=W_0(\bold{w}^{(0)})$$
and call it {\it the optimistic limit of the colored Jones polynomial} $J_K(N;\exp\frac{2\pi i}{N})$.
With this definition, Theorem \ref{thm} implies (\ref{goal}). Also, we obtain the colored Jones polynomial version
of Corollary 1.4 of \cite{Cho10b} as follows.

\begin{cor}\label{cor}
For a hyperbolic knot $K$ with a fixed diagram, assume the assumptions of Proposition \ref{prop11}.
Let $\bold{w}$ be an essential solution of $\mathcal{H}_2$, $\bold{w}^{(0)}$ be the geometric solution of $\mathcal{H}_2$,
and $\rho_{\bold{w}}:\pi_1(S^3-K)\rightarrow{\rm PSL}(2,\mathbb{C})$
be the parabolic representation induced by $\bold{w}$. Also, assume the corresponding solutions
$\bold{z}$ and $\bold{z}^{(0)}$ of $\mathcal{H}_1$, determined by $\bold{w}$ and by $\bold{w}^{(0)}$, respectively, 
from Lemma \ref{lem12} are essential.
Then
$$W_0(\bold{w})\equiv i\,({\rm vol}(\rho_{\bold{w}})+i\,{\rm cs}(\rho_{\bold{w}}))\text{ \rm  (mod } \pi^2),$$
where ${\rm vol}(\rho_{\bold{w}})+i\,{\rm cs}(\rho_{\bold{w}})$ is the complex volume of $\rho_{\bold{w}}$
defined in \cite{Zickert09}. Furthermore, the following inequality holds: 
\begin{equation}\label{representation}
  {\rm Im}\, W_0(\bold{w})\leq{\rm Im}\, W_0(\bold{w}^{(0)})={\rm vol}(K).
\end{equation}
The equality in (\ref{representation}) holds if and only if $\bold{w}=\bold{w}^{(0)}$.
\end{cor}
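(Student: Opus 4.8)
The plan is to deduce the statement from its Kashaev-invariant counterpart, Corollary 1.4 of \cite{Cho10b}, by transporting the data through Theorem \ref{thm}(1) and the solution correspondence of Lemma \ref{lem12}. Let $\bold{z}$ be the essential solution of $\mathcal{H}_1$ corresponding to $\bold{w}$ under Lemma \ref{lem12}. By Theorem \ref{thm}(1),
$$W_0(\bold{w})\equiv V_0(\bold{z})\modulos,$$
hence in particular $W_0(\bold{w})\equiv V_0(\bold{z})\modulo$. On the Yokota side, Corollary 1.4 of \cite{Cho10b} gives
$$V_0(\bold{z})\equiv i\,({\rm vol}(\rho_{\bold{z}})+i\,{\rm cs}(\rho_{\bold{z}}))\modulo,$$
where $\rho_{\bold{z}}$ is the parabolic representation induced by $\bold{z}$ on Yokota triangulation. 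Thus the congruence in the statement will follow once we identify $\rho_{\bold{z}}$ with $\rho_{\bold{w}}$ as representations of $\pi_1(S^3-K)$.

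The identification of these two representations is the crux of the argument, and I would extract it from the proof of Lemma \ref{lem12} in Section \ref{ch5}. Both Yokota and Thurston triangulations are ideal triangulations of the same cusped manifold $S^3-K$; by Proposition \ref{prop11} and its Yokota analogue, $\bold{w}$ and $\bold{z}$ are solutions of the hyperbolicity equations of the respective triangulations, so each determines a developing map and a holonomy representation. Since the correspondence of Lemma \ref{lem12} is constructed so that the shape parameters of the two triangulations match up geometrically, the two developing maps agree and therefore define the same holonomy up to conjugation; that is, $\rho_{\bold{z}}$ and $\rho_{\bold{w}}$ are conjugate. Consequently ${\rm vol}(\rho_{\bold{z}})={\rm vol}(\rho_{\bold{w}})$ and ${\rm cs}(\rho_{\bold{z}})={\rm cs}(\rho_{\bold{w}})$, and combining with the two congruences above yields $W_0(\bold{w})\equiv i\,({\rm vol}(\rho_{\bold{w}})+i\,{\rm cs}(\rho_{\bold{w}}))\modulo$. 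I expect this geometric matching to be the main obstacle, since it requires tracking the geometric meaning of the correspondence of Lemma \ref{lem12} rather than merely its algebraic form.

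For the inequality (\ref{representation}), note that the modulus $\pi^2$ is real, so taking imaginary parts of the congruence is unambiguous and gives ${\rm Im}\,W_0(\bold{w})={\rm vol}(\rho_{\bold{w}})$; applying the same reasoning to $\bold{w}^{(0)}$ together with Theorem \ref{thm}(2) gives ${\rm Im}\,W_0(\bold{w}^{(0)})={\rm vol}(K)$. The bound ${\rm vol}(\rho_{\bold{w}})\le{\rm vol}(K)$, with equality precisely when $\rho_{\bold{w}}$ is the discrete faithful representation, is the volume-rigidity statement already contained in Corollary 1.4 of \cite{Cho10b} on the Yokota side; transporting it through ${\rm Im}\,W_0(\bold{w})={\rm Im}\,V_0(\bold{z})$ and ${\rm Im}\,W_0(\bold{w}^{(0)})={\rm Im}\,V_0(\bold{z}^{(0)})$ gives
$${\rm Im}\,W_0(\bold{w})={\rm Im}\,V_0(\bold{z})\le{\rm Im}\,V_0(\bold{z}^{(0)})={\rm Im}\,W_0(\bold{w}^{(0)})={\rm vol}(K).$$
Finally, equality holds if and only if $\bold{z}=\bold{z}^{(0)}$, which by the bijection of Lemma \ref{lem12} is equivalent to $\bold{w}=\bold{w}^{(0)}$, establishing the equality condition and completing the proof.
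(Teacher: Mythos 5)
Your proposal is correct and follows essentially the same route as the paper's proof: reduce to the Yokota/Kashaev side via Theorem \ref{thm} and the correspondence of Lemma \ref{lem12}, identify $\rho_{\bold{w}}$ with $\rho_{\bold{z}}$ through the geometric coincidence of the two triangulations' shapes, and then combine the Kashaev-side congruence with volume rigidity to get the inequality and its equality condition. The only difference is sourcing: you quote Corollary 1.4 of \cite{Cho10b} wholesale, while the paper re-derives that congruence by observing that Yokota's argument via Zickert's formula applies to any parabolic representation induced by an essential $\bold{z}$, and cites \cite{Francaviglia04} for the rigidity statement.
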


\begin{proof} 
It is a well-known fact that the hyperbolic volume is the maximal volume
of all possible $\rm PSL(2,\mathbb{C})$ representations and that the maximum happens if and only if
the representation is discrete and faithful (for the proof and details, see \cite{Francaviglia04}).

From the proof of Lemma \ref{lem12}, if $\bold{w}$ and $\bold{z}$ are essential, 
then the shapes of each (collapsed) octahedra in Figure \ref{pic2} and Figure \ref{pic9} of Yokota and Thurston triangulations coincide.
Therefore, these triangulations form the same geometric shape, 
and the parabolic representation $\rho_{\bold{w}}$ coincides with $\rho_{\bold{z}}$ up to conjugate,
where $\rho_{\bold{w}}$ and $\rho_{\bold{z}}$ are the parabolic representations induced by $\bold{w}$ and by $\bold{z}$, respectively.
This also implies that $\bold{z^{(0)}}$ is the geometric solution of $\mathcal{H}_1$.

Yokota proved
$$V_0(\bold{z^{(0)}})\equiv i\,({\rm vol}(K)+i\,{\rm cs}(K))\text{ \rm  (mod } \pi^2)$$
in \cite{Yokota10} using Zickert's formula of \cite{Zickert09}, but the formula also holds for
any parabolic representation $\rho_{\bold{z}}$ induced by $\bold{z}$.
Therefore, Yokota's proof also implies
$$V_0(\bold{z})\equiv i\,({\rm vol}(\rho_{\bold{z}})+i\,{\rm cs}(\rho_{\bold{z}}))\text{ \rm  (mod }\pi^2).$$

Among the essential solutions $\bold{z}$ of $\mathcal{H}_1$,
only the geometric solution $\bold{z^{(0)}}$ induces the discrete faithful representation.
Therefore, applying Theorem \ref{thm}, we complete the proof.

\end{proof}

This paper consists of the following contents.
In Section \ref{ch2}, we describe Yokota triangulation and Thurston triangulation, 
which correspond to the Kashaev invariant and the colored Jones polynomial, respectively.
We show that these two triangulations are related by finite steps of 3-2 moves and 4-5 moves on some crossings.
In Section \ref{ch3}, the potential functions $V$ and $W$ are defined.
In Section \ref{ch4}, we explain the geometries of $V$ and $W$, and prove Proposition \ref{prop11}. 
In Section \ref{ch5}, we introduce several dilogarithm identities 
and complete the proofs of Lemma \ref{lem12} and Theorem \ref{thm} using these identities.
In Appendix \ref{app1}, we show the potential function $W$ defined in Section \ref{ch3} can be obtained 
by the formal substitution of the colored Jones polynomial. Finally, in Appendix \ref{app2},
we investigate the necessary and sufficient condition for 
an essential solution of $\mathcal{H}_1$ (respectively, $\mathcal{H}_2$) to induce 
the inessential solution of $\mathcal{H}_2$ (respectively, $\mathcal{H}_1$).

\section{Two ideal triangulations of the knot complement}\label{ch2}

In this section, we explain two ideal triangulations of the knot complement.
One is Yokota triangulation corresponding to the Kashaev invariant in \cite{Yokota10}
and the other is Thurston triangulation corresponding to the colored Jones polynomial in \cite{Thurston99}.
A good reference of this section is \cite{Murakami01b}, which contains wonderful pictures.

\subsection{Yokota triangulation}\label{ch21}
\begin{figure}[h]
\centering
  \subfigure[Knot]
  {\includegraphics[scale=0.4]{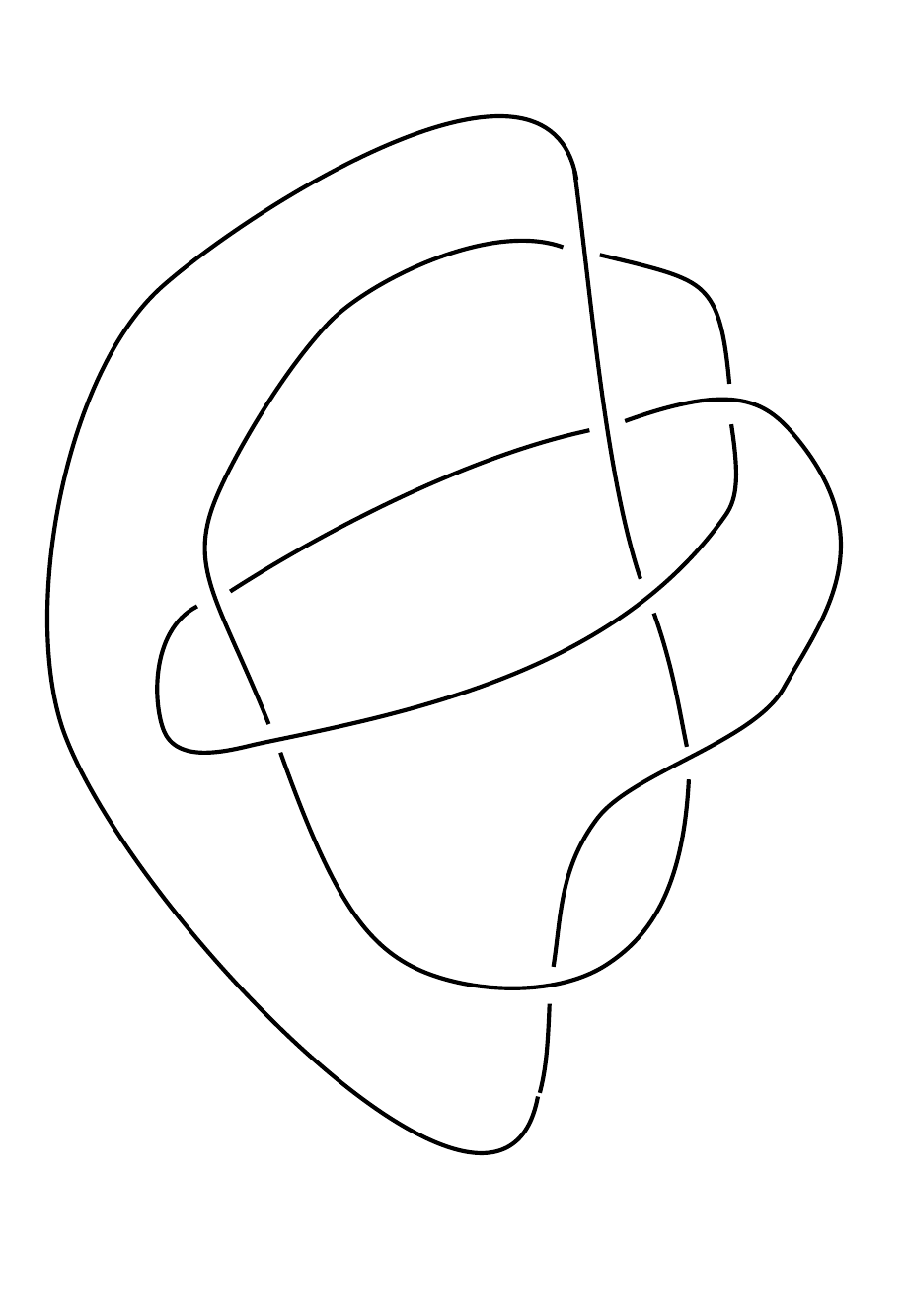}}
  \subfigure[(1,1)-tangle]
  {\includegraphics[scale=0.5]{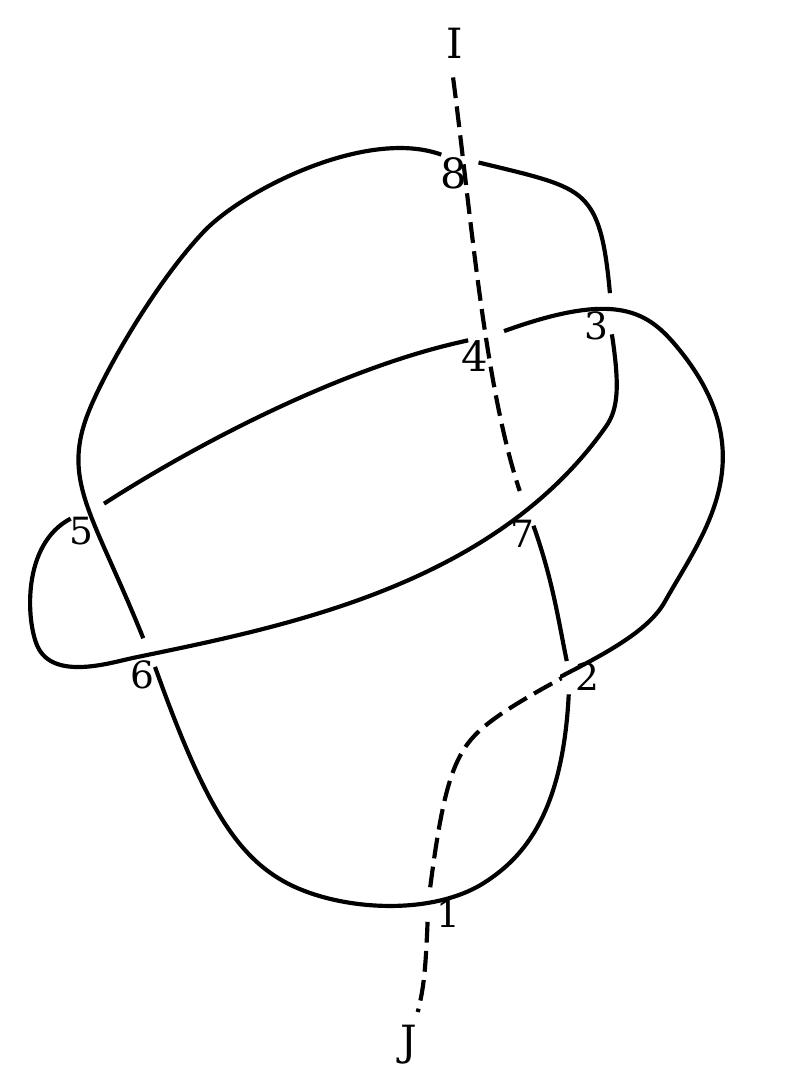}}
%  \subfigure[$G$ is obtained by removing $I\cup J$]
%  {\includegraphics[scale=0.5]{03graph}}
% \subfigure[$G$ with tetrahedra]
%  {\includegraphics[scale=0.5]{04tetrahedra}}
  \caption{Example}\label{pic1}
\end{figure}
Consider a hyperbolic knot $K$ and its diagram $D$ (see Figure \ref{pic1}(a)).
We define {\it sides} of $D$ as arcs connecting two adjacent crossing points.
For example, Figure \ref{pic1}(a) has 16 sides.

Now split a side of $D$ open so as to make a (1,1)-tangle diagram
and label crossings with integers (see Figure \ref{pic1}(b)).
Yokota assumed several conditions on this (1,1)-tangle diagram
(for the exact statement, see \textbf{Assumption 1.1.--1.4.} and \textbf{Assumption 2.2.} in \cite{Yokota10}).
The assumptions roughly mean that we remove all the crossing points that can be reduced trivially.
Also, let the two open sides be $I$ and $J$ and consider the orientation from $J$ to $I$. 
Assume $I$ and $J$ are in an over-bridge and in an under-bridge, respectively 
(Over-bridge is a union of sides, following the orientation of the knot diagram,
from one over-crossing point to the next under-crossing point.
Under-bridge is the one from one under-crossing point to the next over-crossing point.
The boundary endpoints of $I$ and $J$ are considered over-crossing point and under-crossing point, respectively.
For example, in Figure \ref{pic1}(b), if we follow the diagram from the below to the top, 
the first under-bridge containing $J$ ends at the crossing 2,
and the first over-bridge starts at the crossing 2 and ends at the crossing 4. 
In total, it has 5 over-bridges and 5 under-bridges. Note that if we change the orientation, the numbers of
over-bridges and under-bridges change).

Now extend $I$ and $J$ so that, when following the orientation of the knot diagram, 
non-boundary endpoints of $I$ and $J$ become the last under-crossing point
and the first over-crossing point, respectively, as in Figure \ref{pic1}(b).
Then we assume the two non-boundary endpoints
of $I$ and $J$ do not coincide, because, if they coincide, then we cut other side open and make a different tangle diagram.
Yokota proved in \cite{Yokota10} that we can always make two non-boundary endpoints different by cutting certain side open because,
if not, then the diagram should be that of a link or the trefoil knot 
(for details, see \textbf{Assumption 1.3.} and the discussion that follows in \cite{Yokota10}).

\begin{figure}[h]
\centering
  \subfigure[Octahedron on the crossing $n$]
  {\includegraphics[scale=0.6]{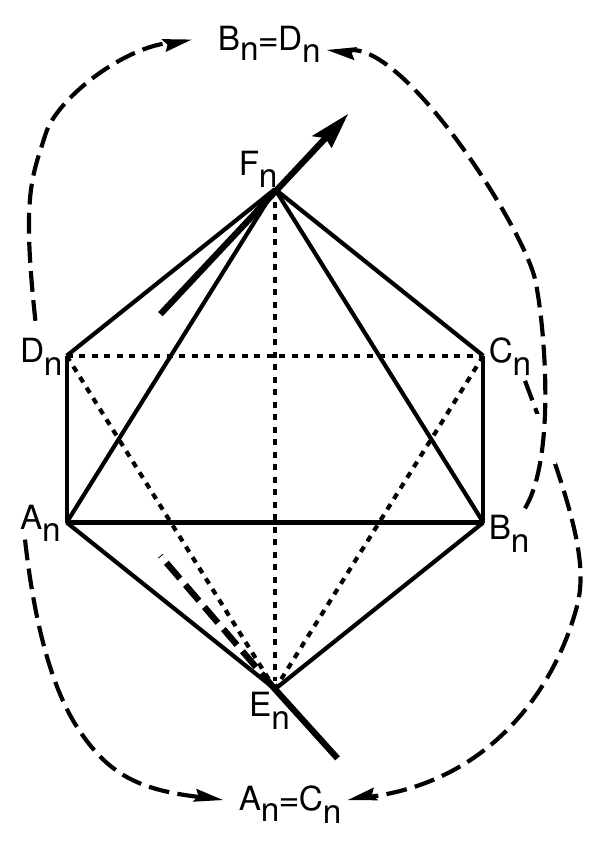}}
  \subfigure[Octahedra on crossings]
  {\includegraphics[scale=0.5]{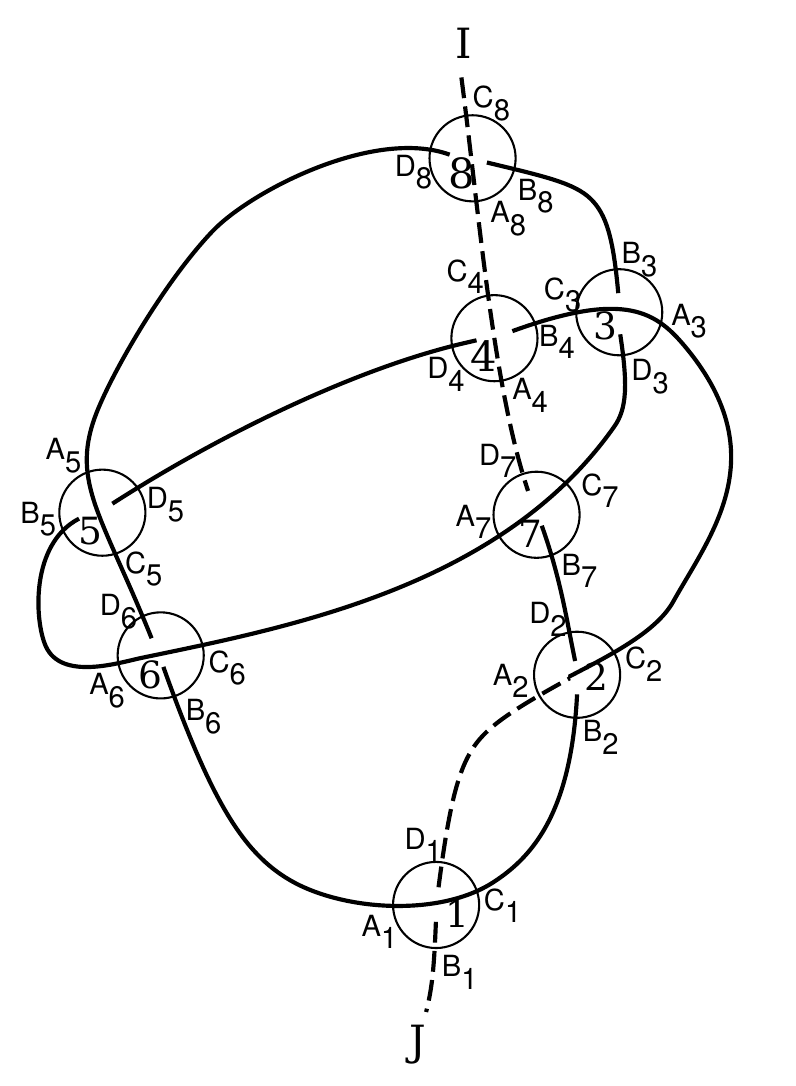}}
  \caption{Example(continued)}\label{pic2}
\end{figure}

To obtain an ideal triangulation of the knot complement, we 
%orient the (1,1)-tangle from $J$ to $I$ and 
place an ideal octahedron
${\rm A}_n{\rm B}_n{\rm C}_n{\rm D}_n{\rm E}_n{\rm F}_n$ on each crossing $n$ as in Figure \ref{pic2}(a).
We call the edges ${\rm A}_n{\rm B}_n$, ${\rm B}_n{\rm C}_n$, ${\rm C}_n{\rm D}_n$ and ${\rm D}_n{\rm A}_n$
of the octahedron {\it horizontal edges}.
Figure \ref{pic2}(b) shows the positions of ${\rm A}_n$, ${\rm B}_n$, ${\rm C}_n$, ${\rm D}_n$
and the horizontal edges. We twist the octahedron by identifying the edges
${\rm A}_n{\rm E}_n$ to ${\rm C}_n{\rm E}_n$ and ${\rm B}_n{\rm F}_n$ to ${\rm D}_n{\rm F}_n$
as in Figure \ref{pic2}(a)
(the actual shape of the resulting diagram appears in \cite{Murakami01b}).
Then we glue the faces of the twisted octahedron following the knot diagram.
For example, in Figure \ref{pic2}(b),
we glue $\triangle{\rm A}_1{\rm E}_1{\rm D}_1\cup\triangle{\rm C}_1{\rm E}_1{\rm D}_1$ to
$\triangle{\rm A}_2{\rm F}_2{\rm D}_2\cup\triangle{\rm A}_2{\rm F}_2{\rm B}_2$,
$\triangle{\rm C}_2{\rm F}_2{\rm D}_2\cup\triangle{\rm C}_2{\rm F}_2{\rm B}_2$ to
$\triangle{\rm A}_3{\rm F}_3{\rm D}_3\cup\triangle{\rm A}_3{\rm F}_3{\rm B}_3$,
$\triangle{\rm C}_3{\rm F}_3{\rm D}_3\cup\triangle{\rm C}_3{\rm F}_3{\rm B}_3$ to
$\triangle{\rm A}_4{\rm E}_4{\rm B}_4\cup\triangle{\rm C}_4{\rm E}_4{\rm B}_4$,
$\triangle{\rm A}_4{\rm E}_4{\rm D}_4\cup\triangle{\rm C}_4{\rm E}_4{\rm D}_4$ to
$\triangle{\rm C}_5{\rm E}_5{\rm D}_5\cup\triangle{\rm A}_5{\rm E}_5{\rm D}_5$, and so on.
Finally, we glue $\triangle{\rm D}_8{\rm F}_8{\rm C}_8\cup\triangle{\rm B}_8{\rm F}_8{\rm C}_8$ to
$\triangle{\rm A}_1{\rm E}_1{\rm B}_1\cup\triangle{\rm C}_1{\rm E}_1{\rm B}_1$.
Note that, by gluing likewise, all ${\rm A}_n$ and ${\rm C}_n$ are identified to one point, all ${\rm B}_n$ and ${\rm D}_n$
are identified to another point, and all ${\rm E}_n$ and ${\rm F}_n$ are identified to yet another point.
Let these points be $-\infty$, $\infty$ and $\ell$, respectively.
Then the regular neighborhoods of $-\infty$ and $\infty$ become 3-balls, whereas that of $\ell$
becomes the tubular neighborhood of the knot $K$.

We split each octahedron ${\rm A}_n{\rm B}_n{\rm C}_n{\rm D}_n{\rm E}_n{\rm F}_n$
into four tetrahedra, ${\rm A}_n{\rm B}_n{\rm E}_n{\rm F}_n$,
${\rm B}_n{\rm C}_n{\rm E}_n{\rm F}_n$, ${\rm C}_n{\rm D}_n{\rm E}_n{\rm F}_n$ and
${\rm D}_n{\rm A}_n{\rm E}_n{\rm F}_n$.
Then we collapse faces that lie on the split sides. For example, in Figure \ref{pic2}(b), we collapse the faces
$\triangle{\rm A}_1{\rm E}_1{\rm B}_1\cup\triangle{\rm C}_1{\rm E}_1{\rm B}_1$ and
$\triangle{\rm D}_8{\rm F}_8{\rm C}_8\cup\triangle{\rm B}_8{\rm F}_8{\rm C}_8$ to different points.
Note that this face collapsing makes some edges on these faces into points.
Actually, the non-horizontal edges ${\rm A}_2{\rm F}_2$, ${\rm B}_4{\rm F}_4$, ${\rm D}_4{\rm F}_4$,
${\rm D}_7{\rm E}_7$, and the horizontal edges ${\rm B}_2{\rm C}_2$, ${\rm A}_3{\rm B}_3$,
${\rm A}_5{\rm B}_5$, ${\rm A}_6{\rm B}_6$ in Figure \ref{pic2}(b) are collapsed to points
because of the face collapsing. This makes the tetrahedra ${\rm A}_1{\rm B}_1{\rm E}_1{\rm F}_1$,
${\rm B}_1{\rm C}_1{\rm E}_1{\rm F}_1$, ${\rm C}_1{\rm D}_1{\rm E}_1{\rm F}_1$,
${\rm D}_1{\rm A}_1{\rm E}_1{\rm F}_1$, ${\rm A}_2{\rm B}_2{\rm E}_2{\rm F}_2$,
${\rm B}_2{\rm C}_2{\rm E}_2{\rm F}_2$, ${\rm D}_2{\rm A}_2{\rm E}_2{\rm F}_2$,
${\rm A}_3{\rm B}_3{\rm E}_3{\rm F}_3$, ${\rm A}_4{\rm B}_4{\rm E}_4{\rm F}_4$, 
${\rm B}_4{\rm C}_4{\rm E}_4{\rm F}_4$, ${\rm C}_4{\rm D}_4{\rm E}_4{\rm F}_4$,
${\rm D}_4{\rm A}_4{\rm E}_4{\rm F}_4$, ${\rm A}_5{\rm B}_5{\rm E}_5{\rm F}_5$,
${\rm A}_6{\rm B}_6{\rm E}_6{\rm F}_6$, ${\rm C}_7{\rm D}_7{\rm E}_7{\rm F}_7$,
${\rm D}_7{\rm A}_7{\rm E}_7{\rm F}_7$, ${\rm A}_8{\rm B}_8{\rm E}_8{\rm F}_8$,
${\rm B}_8{\rm C}_8{\rm E}_8{\rm F}_8$, ${\rm C}_8{\rm D}_8{\rm E}_8{\rm F}_8$ and
${\rm D}_8{\rm A}_8{\rm E}_8{\rm F}_8$
be collapsed to points or edges.

\begin{figure}[h]
\centering
  \includegraphics[scale=0.5]{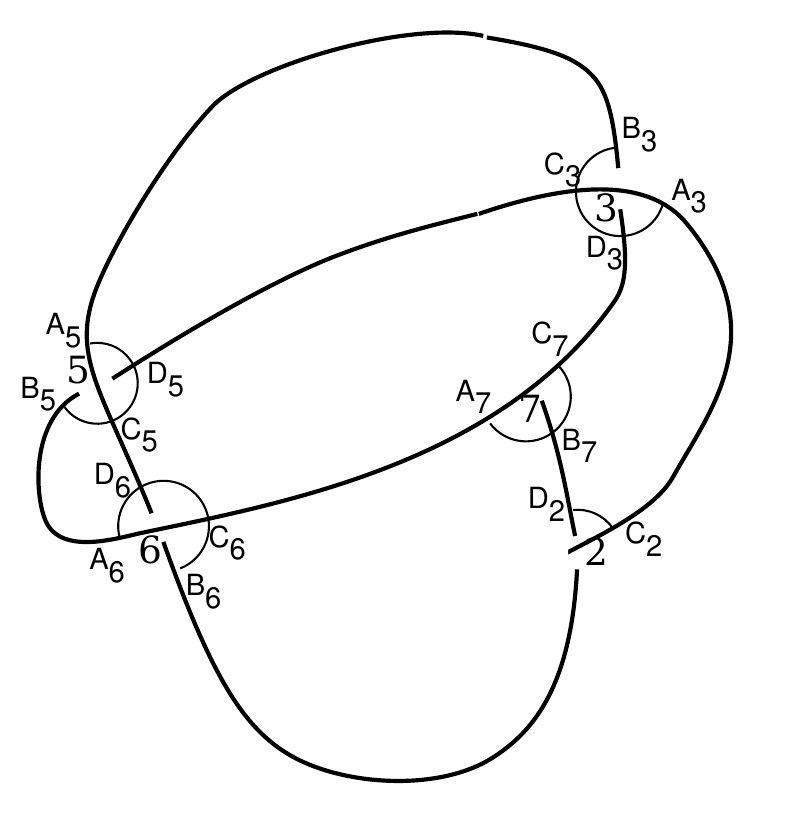}
  \caption{G with survived tetrahedra}\label{pic3}
\end{figure}

The surviving tetrahedra after the collapsing can be depicted as follows.
First, remove $I$ and $J$ on the tangle diagram and denote the result as $G$
(see Figure \ref{pic3}). Note that, by removing $I\cup J$,
some vertices are removed, two vertices become trivalent and some sides are glued together.
In Figure \ref{pic3}, vertices 1, 4, 8 are removed, 2, 7 become trivalent and $G$ has 9 sides
(we consider the sides at the trivalent vertices are not glued together).
Now we remove the horizontal edges on the removed vertices, the horizontal edges that are adjacent to $I\cup J$
and the horizontal edges in the unbounded region
(see Figure \ref{pic3} for the result).
The surviving horizontal edges mean the surviving ideal tetrahedra after the collapsing.
In the example, 12 tetrahedra survive.

The collapsing identifies the points $\infty$, $-\infty$, and $\ell$ to each other
and connects the regular neighborhoods of them.
Collapsing certain edges of a tetrahedron may change the topological type of $\ell$,
but Yokota excluded such cases by \textbf{Assumption 1.1.--1.3.} on the shape of the knot diagram.
(\textbf{Assumption 1.1.--1.2.} roughly means the diagram has no redundant crossings and 
\textbf{Assumption 1.3.} means the two non-boundary endpoints of $I$ and $J$ do not coincide.)
Therefore, the result of the collapsing makes the neighborhood of $\infty=-\infty=\ell$
to be the tubular neighborhood of the knot,
and we obtain the ideal triangulation of the knot complement
(see \cite{Yokota10} for a complete discussion).

\subsection{Thurston triangulation}\label{ch22}

Thurston triangulation, introduced in \cite{Thurston99}, uses the same octahedra and the same collapsing process,
so it also induces an ideal triangulation of the knot complement.
However, it uses a different subdivision of each octahedra. In Figure \ref{pic2}(a), Yokota triangulation subdivides
each octahedron into four tetrahedra. However, Thurston triangulation subdivides it into five tetrahedra,
${\rm A}_n{\rm B}_n{\rm D}_n{\rm F}_n$, ${\rm B}_n{\rm C}_n{\rm D}_n{\rm F}_n$,
${\rm A}_n{\rm B}_n{\rm C}_n{\rm D}_n$, ${\rm A}_n{\rm B}_n{\rm C}_n{\rm E}_n$ and
${\rm A}_n{\rm C}_n{\rm D}_n{\rm E}_n$
(see the right-hand side of Figure \ref{pic4}(a) for the shape of the subdivision).
In this subdivision, if we apply the collapsing process, then some pair of tetrahedra shares the same four vertices
(see the first case of (Case 2) in the proof of Observation \ref{obs} for an example).
For the convenience of discussion, when this happens, we remove these two tetrahedra and
call the result Thurston triangulation.

To see the relation between these two triangulations,
we define {\it 4-5 move} of an octahedron and {\it 3-2 move} of a hexahedron
as in Figure \ref{pic4}.

\begin{figure}[h]
\centering
  \subfigure[4-5 move]
  {\includegraphics[scale=0.4]{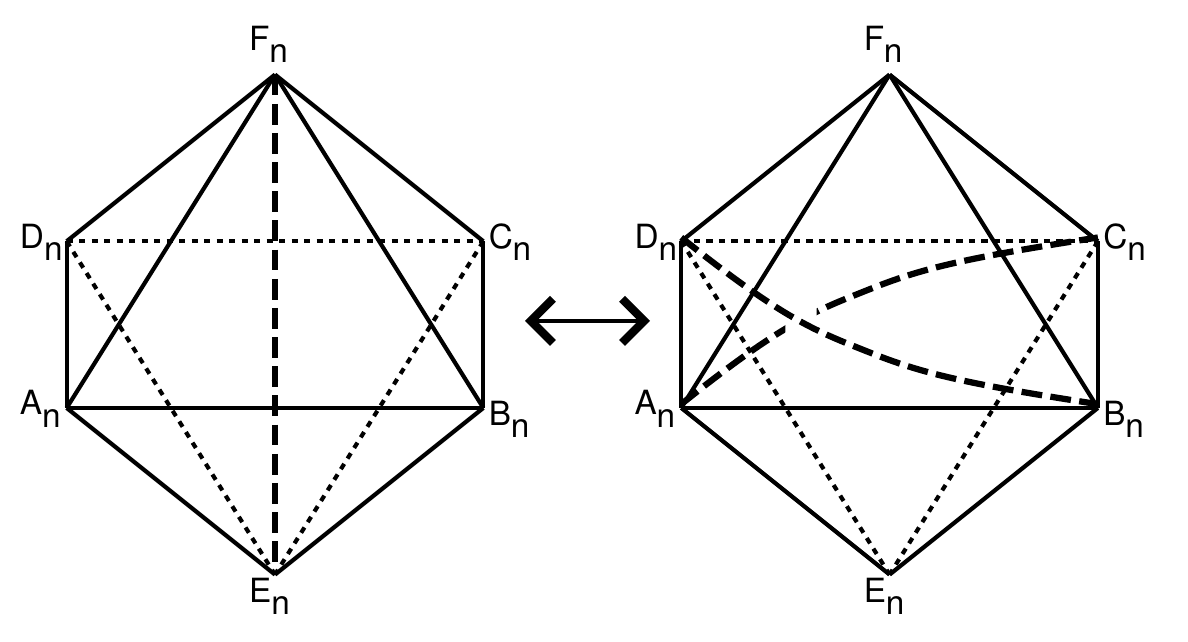}}
  \subfigure[3-2 move]
  {\includegraphics[scale=0.4]{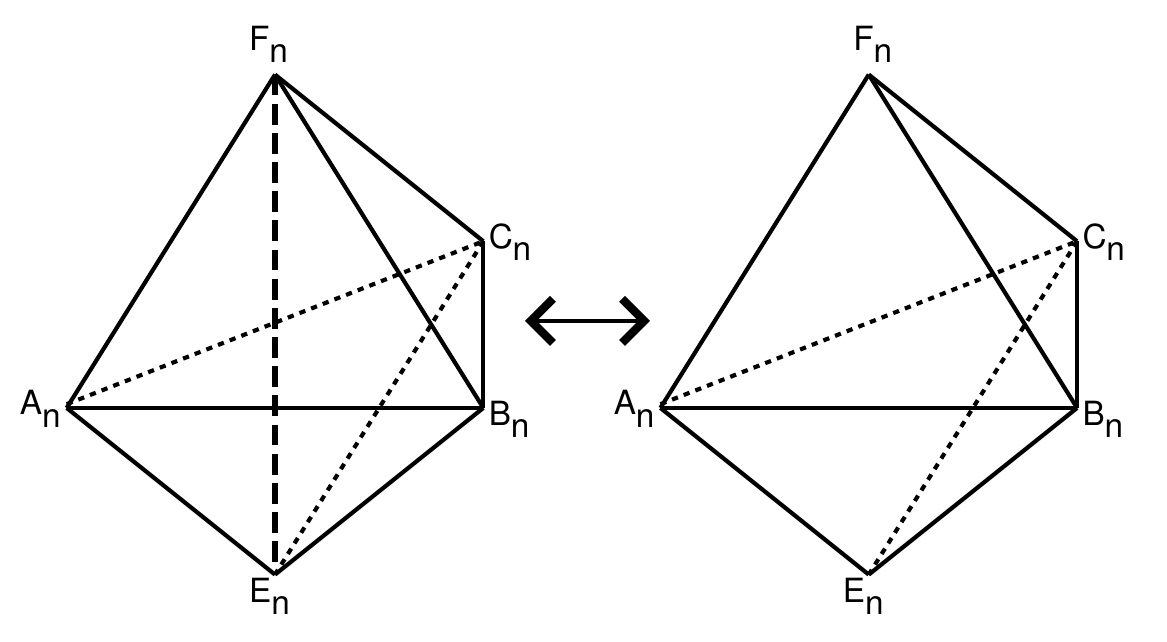}}
  \caption{4-5 and 3-2 moves}\label{pic4}
\end{figure}

Before the collapsing process, two triangulations are related by only 4-5 moves on each crossings.
However, the following observation shows they are actually related by 4-5 moves and also by 3-2 moves
on some crossings after the collapsing.

\begin{obs}\label{obs}
For a hyperbolic knot $K$ with a fixed diagram, if the diagram satisfies 
\textbf{Assumption 1.1.--1.4.} and \textbf{Assumption 2.2.} in \cite{Yokota10},
then Yokota triangulation and Thurston triangulation are related by 3-2 moves and 4-5 moves on some crossings.\end{obs}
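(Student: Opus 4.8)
The plan is to reduce the global comparison to a local, crossing-by-crossing analysis. Since Yokota triangulation and Thurston triangulation are assembled from the same octahedra placed on the crossings, undergo the same face-gluings, and are subjected to the same collapsing process, they differ only in how each individual octahedron ${\rm A}_n{\rm B}_n{\rm C}_n{\rm D}_n{\rm E}_n{\rm F}_n$ is cut into tetrahedra. Hence it suffices to compare, for each crossing $n$, the surviving Yokota tetrahedra with the surviving Thurston tetrahedra and to exhibit a sequence of 3-2 and 4-5 moves relating them; these local sequences then assemble to a global one, because the two triangulations coincide away from the octahedron being modified.

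First I would treat the uncollapsed model. Before the collapsing, the four Yokota tetrahedra sharing the axis ${\rm E}_n{\rm F}_n$ and the five Thurston tetrahedra are, by construction (Figure \ref{pic4}(a)), precisely the two sides of a single 4-5 move. Consequently, on every crossing whose octahedron is left intact by the collapsing---call this Case 1---the two triangulations are related locally by one 4-5 move, and nothing further is needed.

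The content is in Case 2, the crossings where the collapsing described in Section \ref{ch21} sends some horizontal or non-horizontal edges of the octahedron to points and thereby degenerates some tetrahedra. I would organize Case 2 by which edges collapse, and in each subcase track the identifications among ${\rm A}_n,\ldots,{\rm F}_n$. The crucial phenomenon, already flagged in the definition of Thurston triangulation in Section \ref{ch22}, is that such a collapse forces a pair of the five Thurston tetrahedra to share the same four vertices; by our convention this pair is deleted, leaving three Thurston tetrahedra. I would then check that these three tetrahedra together with the surviving Yokota tetrahedra on the same crossing form the two triangulations of a single triangular bipyramid (a hexahedron), so that they are related by exactly one 3-2 move in the sense of Figure \ref{pic4}(b). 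Combining Case 1 and Case 2 over all crossings yields the desired sequence of 3-2 and 4-5 moves.

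The main obstacle is the bookkeeping in Case 2: one must enumerate all collapsing patterns allowed under \textbf{Assumption 1.1.--1.4.} and \textbf{Assumption 2.2.}, identify in each which Thurston pair degenerates and which Yokota tetrahedra survive, and verify that the surviving pieces really do close up into a triangular bipyramid with the correctly identified vertices rather than into some degenerate or inadmissible configuration. Matching the collapsed axis ${\rm E}_n{\rm F}_n$ and the identified vertices against the vertex pattern demanded by the 3-2 move is where the verification is most delicate, and it is here that Yokota's assumptions on the diagram are essential to rule out the bad configurations.
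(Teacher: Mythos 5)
Your global-to-local reduction and your treatment of intact octahedra (one 4-5 move per uncollapsed crossing) agree with the paper, but the mechanism you propose for the 3-2 moves is wrong, and this is a genuine gap, not bookkeeping. The pair-deletion convention in the definition of Thurston triangulation is triggered only when a \emph{non-horizontal} edge such as ${\rm D}_n{\rm E}_n$ is collapsed, which happens only at the trivalent vertices of $G$ (the endpoints of $I$ and $J$). In that situation the collapse also degenerates the tetrahedron ${\rm A}_n{\rm C}_n{\rm D}_n{\rm E}_n$, so after deleting the pair ${\rm A}_n{\rm B}_n{\rm C}_n{\rm D}_n$, ${\rm A}_n{\rm B}_n{\rm C}_n{\rm E}_n$ you are left with \emph{two} Thurston tetrahedra, not three; and since ${\rm D}_n={\rm E}_n$, these two, ${\rm A}_n{\rm B}_n{\rm D}_n{\rm F}_n$ and ${\rm B}_n{\rm C}_n{\rm D}_n{\rm F}_n$, literally coincide with the two surviving Yokota tetrahedra ${\rm A}_n{\rm B}_n{\rm E}_n{\rm F}_n$ and ${\rm B}_n{\rm C}_n{\rm E}_n{\rm F}_n$. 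So at exactly the crossings where your "crucial phenomenon" occurs, no move is needed at all: the deletion convention is what makes the two triangulations equal there. Conversely, the crossings that genuinely require a 3-2 move are the non-trivalent ones at which one \emph{horizontal} edge, say ${\rm A}_n{\rm B}_n$, is collapsed: there the Thurston survivors are ${\rm B}_n{\rm C}_n{\rm D}_n{\rm F}_n$ and ${\rm A}_n{\rm C}_n{\rm D}_n{\rm E}_n$, no two tetrahedra share the same four vertices (so no pair is deleted), and the hexahedron of Figure \ref{pic4}(b) is filled by \emph{three} Yokota tetrahedra on one side and \emph{two} Thurston tetrahedra on the other. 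Your uniform claim "collapse $\Rightarrow$ pair deleted $\Rightarrow$ three Thurston tetrahedra $\Rightarrow$ 3-2 move against the surviving Yokota tetrahedra" is therefore incorrect in both regimes, and even the counts $3$ and $2$ sit on the wrong sides.

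Separately, the step you defer as "the main obstacle" is precisely where the hypotheses on the diagram do their work, and you give no argument for it: one must show that at a non-trivalent crossing at most one horizontal edge can be collapsed. The paper's argument is that two collapsed horizontal edges of one octahedron would both lie in unbounded regions, forcing the $(1,1)$-tangle to have one of the two shapes in Figure \ref{pic5}; the shape of Figure \ref{pic5}(a) is excluded because closing it up would give $K=K_1\# K_2$, contradicting primeness of the hyperbolic knot $K$, and that of Figure \ref{pic5}(b) is excluded by \textbf{Assumption 1.1.} of \cite{Yokota10}, since such a crossing could be removed. Without this enumeration your case analysis has no justified list of cases, and with it the case analysis essentially \emph{is} the proof --- so once the 3-2 mechanism above is corrected, what remains of your proposal is the part you have not carried out.
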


\begin{proof}
First, for a non-trivalent vertex $n$ of $G$, we show only one horizontal edge in Figure \ref{pic2}(a)
can be collapsed. If any of two horizontal edges are collapsed,
then the (1,1)-tangle diagram should be Figure \ref{pic5}(a) or Figure \ref{pic5}(b)
for some tangle diagrams $K_1$ or $K_2$ because the collapsed edges should lie in the unbounded regions.
However, Figure \ref{pic5}(a) is excluded because, if we close up the open side, then $K=K_1\# K_2$ and $K$ cannot be prime.
We can also exclude Figure \ref{pic5}(b) because it violates \textbf{Assumption 1.1.} in \cite{Yokota10}. Actually, in the later case,
we can reduce the number of crossings as in Figure \ref{pic5}(b).

\begin{figure}[h]
\centering
  \subfigure[]
  {\includegraphics[scale=0.5]{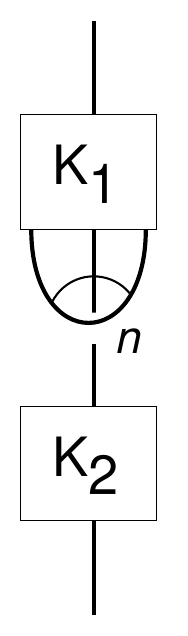}}\hspace{1cm}
  \subfigure[]
  {\includegraphics[scale=0.55]{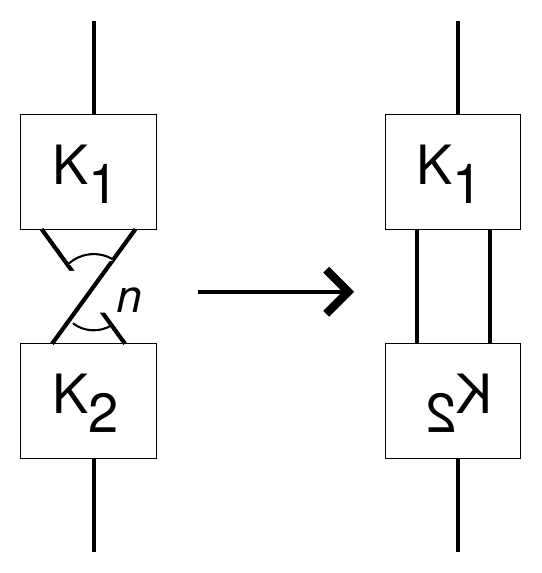}}
  \caption{When two horizontal edges are collapsed}\label{pic5}
\end{figure}

Because of this and Yokota's Assumptions,
all possible cases of collapsing edges in Figure \ref{pic2}(a) are as follows :

(Case 1)
  if $n$ is a non-trivalent vertex of $G$, then none or one of the horizontal edges is collapsed.

(Case 2)
  if $n$ is a trivalent vertex of $G$, then
  \begin{enumerate}
    \item ${\rm D}_n{\rm E}_n$ is collapsed and none or one of ${\rm A}_n{\rm B}_n$, ${\rm B}_n{\rm C}_n$ is collapsed,
    \item ${\rm B}_n{\rm E}_n$ is collapsed and none or one of ${\rm C}_n{\rm D}_n$, ${\rm D}_n{\rm A}_n$ is collapsed,
    \item ${\rm A}_n{\rm F}_n$ is collapsed and none or one of ${\rm B}_n{\rm C}_n$, ${\rm C}_n{\rm D}_n$ is collapsed.
  \end{enumerate}

(Case 1) is trivial, so we consider the first case of (Case 2).

If ${\rm D}_n{\rm E}_n$ and ${\rm A}_n{\rm B}_n$ are collapsed, then the survived tetrahedron is
${\rm B}_n{\rm C}_n{\rm E}_n{\rm F}_n$ in Yokota triangulation, and
${\rm B}_n{\rm C}_n{\rm D}_n{\rm F}_n$ in Thurston triangulation. They coincide because
${\rm D}_n={\rm E}_n$ by the collapsing of ${\rm D}_n{\rm E}_n$.

If ${\rm D}_n{\rm E}_n$ is collpased and no others are, then the survived tetrahedra are
${\rm A}_n{\rm B}_n{\rm E}_n{\rm F}_n$ and ${\rm B}_n{\rm C}_n{\rm E}_n{\rm F}_n$ in Yokota triangulation,
and ${\rm A}_n{\rm B}_n{\rm D}_n{\rm F}_n$, ${\rm B}_n{\rm C}_n{\rm D}_n{\rm F}_n$,
${\rm A}_n{\rm B}_n{\rm C}_n{\rm D}_n$ and ${\rm A}_n{\rm B}_n{\rm C}_n{\rm E}_n$ in Thurston triangulation.
However, in Thurston triangulation,
two tetrahedra ${\rm A}_n{\rm B}_n{\rm C}_n{\rm D}_n$ and ${\rm A}_n{\rm B}_n{\rm C}_n{\rm E}_n$
cancel each other because they share the same vertices ${\rm A}_n$, ${\rm B}_n$ ,${\rm C}_n$ and ${\rm D}_n={\rm E}_n$. 
The others coincide with the tetrahedra in Yokota triangulation
because ${\rm D}_n={\rm E}_n$.

Other cases of (Case 2) are the same as the first case, so the proof is completed.

\end{proof}

\section{Potential functions}\label{ch3}
\subsection{The case of Kashaev invariant}\label{ch31}

In the case of Kashaev invariant, Yokota's potential function $V(z_1,\ldots,z_g)$ is defined by the following way.

For the graph $G$, we define {\it contributing sides} as sides of $G$ which are not on the unbounded regions.
For example, there are 5 contributing sides and 4 non-contributing sides in Figure \ref{pic6}.
We assign complex variables $z_1,\ldots,z_g$ to contributing sides and real number 1 to non-contributing sides.
Then we label each ideal tetrahedra with $IT_1,IT_2,\ldots,IT_s$ and assign $t_l$ ($l=1,\ldots,s$)
to the horizontal edge of $IT_l$ as the shape parameter.
We define $t_l$ as the counterclockwise ratio of the complex variables $z_1,\ldots,z_g$.

\begin{figure}[h]
\centering
  \includegraphics[scale=0.5]{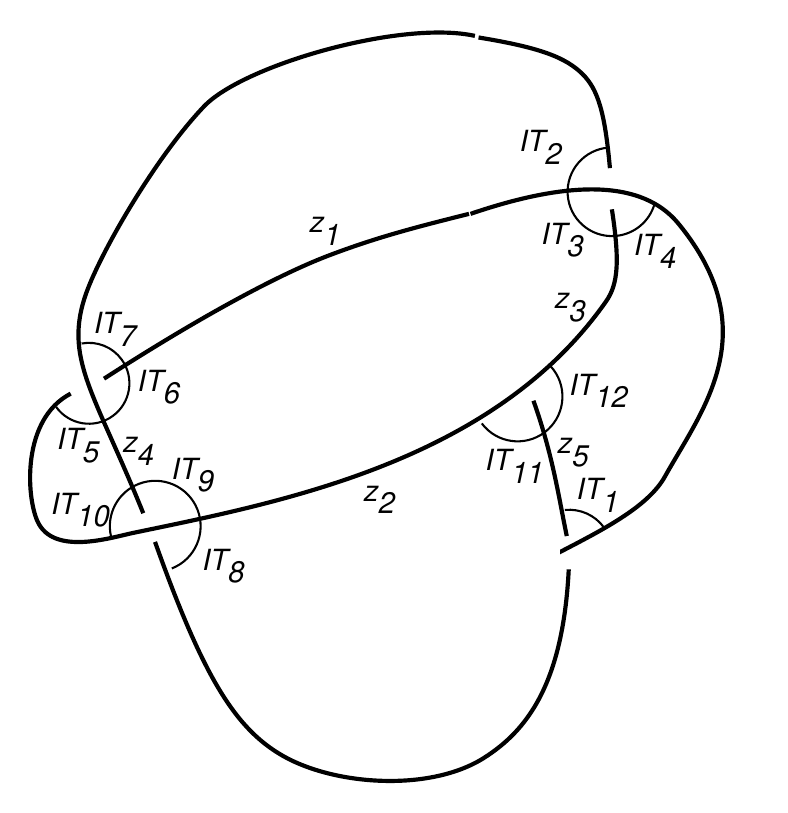}
  \caption{G with contributing sides}\label{pic6}
\end{figure}

For example, in Figure \ref{pic6},
\begin{eqnarray*}
t_1=\frac{z_5}{1},~t_{2}=\frac{z_1}{1},~ t_{3}=\frac{z_3}{z_1}, ~t_{4}=\frac{1}{z_3},~t_5=\frac{z_4}{1},~ t_6=\frac{z_1}{z_4},\\
t_7=\frac{1}{z_1},~t_8=\frac{z_2}{1},~ t_9=\frac{z_4}{z_2}, ~t_{10}=\frac{1}{z_4},~ t_{11}=\frac{z_5}{z_2},~ t_{12}=\frac{z_3}{z_5}.
\end{eqnarray*}

For each tetrahedron $IT_l$, we assign dilogarithm
function as in Figure \ref{pic7}. Then we define $V(z_1,\ldots,z_g)$ by
the summation of all these dilogarithm functions.
We also define the sign $\sigma_l$ of $IT_l$ by
$$\sigma_l=\left\{\begin{array}{ll}
  1&\text{ if }~IT_l\text{ lies as in Figure \ref{pic7}(a)},\\
  -1&\text{ if }~IT_l\text{ lies as in Figure \ref{pic7}(b)}.
  \end{array}\right.$$
Then $V(z_1,\ldots,z_g)$ is expressed by
\begin{equation*}
V(z_1,\ldots,z_g)=\sum_{l=1}^g \sigma_l\left({\rm Li}_2(t_l^{\sigma_l})-\frac{\pi^2}{6}\right).
\end{equation*}
For example, in Figure \ref{pic6},
$$\sigma_1=\sigma_3=\sigma_6=\sigma_9=\sigma_{11}=1,
~\sigma_2=\sigma_4=\sigma_5=\sigma_7=\sigma_8=\sigma_{10}=\sigma_{12}=-1,$$
and
\begin{eqnarray*}
V(z_1,\ldots,z_5)=
{\rm Li}_2(z_5)-{\rm Li}_2(\frac{1}{z_1})+{\rm Li}_2(\frac{z_3}{z_1})
-{\rm Li}_2(z_3)-{\rm Li}_2(\frac{1}{z_4})+{\rm Li}_2(\frac{z_1}{z_4})\\
-{\rm Li}_2(z_1)-{\rm Li}_2(\frac{1}{z_2})+{\rm Li}_2(\frac{z_4}{z_2})
-{\rm Li}_2(z_4)+{\rm Li}_2(\frac{z_5}{z_2})-{\rm Li}_2(\frac{z_5}{z_3})+\frac{\pi^2}{3}.
\end{eqnarray*}

\begin{figure}
\centering
  \subfigure[Positive corner]
  { {\setlength{\unitlength}{0.4cm}
  \begin{picture}(15,6)\thicklines
    \put(6,5){\line(-1,-1){4}}
    \put(6,1){\line(-1,1){1.8}}
    \put(3.8,3.2){\line(-1,1){1.8}}
    \put(4,3){\arc(-0.6,-0.6){90}}
    \put(3.5,1){$IT_l$}
    \put(7,3){$\displaystyle\longrightarrow ~~{\rm Li}_2(t_l)-\frac{\pi^2}{6}$}
  \end{picture}}}
  \subfigure[Negative corner]
  { {\setlength{\unitlength}{0.4cm}
  \begin{picture}(15,6)\thicklines
    \put(2,5){\line(1,-1){4}}
    \put(2,1){\line(1,1){1.8}}
    \put(4.2,3.2){\line(1,1){1.8}}
    \put(4,3){\arc(-0.6,-0.6){90}}
    \put(3.5,1){$IT_l$}
    \put(7,3){$\displaystyle\longrightarrow ~~\frac{\pi^2}{6}-{\rm Li}_2(\frac{1}{t_l})$}
  \end{picture}}}
  \caption{Assignning dilogarithm functions to each tetrahedra}\label{pic7}
\end{figure}
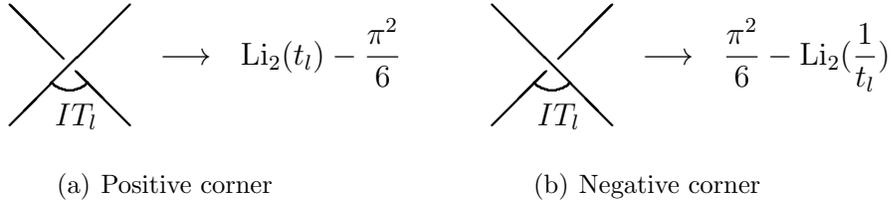
It is shown in \cite{YokotaPre} that $V(z_1,\ldots,z_g)$ can be obtained by the formal
substitution of the Kashaev invariant.\footnote{
We remark that the Kashaev invariant of a knot $K$ defined in \cite{YokotaPre} 
is the one of the mirror image $\overline{K}$ defined in \cite{Murakami01a}. 
This paper follows the definition of \cite{YokotaPre}.}

\subsection{The case of colored Jones polynomial}\label{ch32}

For each region of $G$, we choose one bounded region and assign 1 to it.
Then we assign variables $w_1,\ldots,w_m$ to
the remaining bounded regions, and 0 to the unbounded region (see Figure \ref{reg}).

\begin{figure}[h]
\centering
  \includegraphics[scale=0.5]{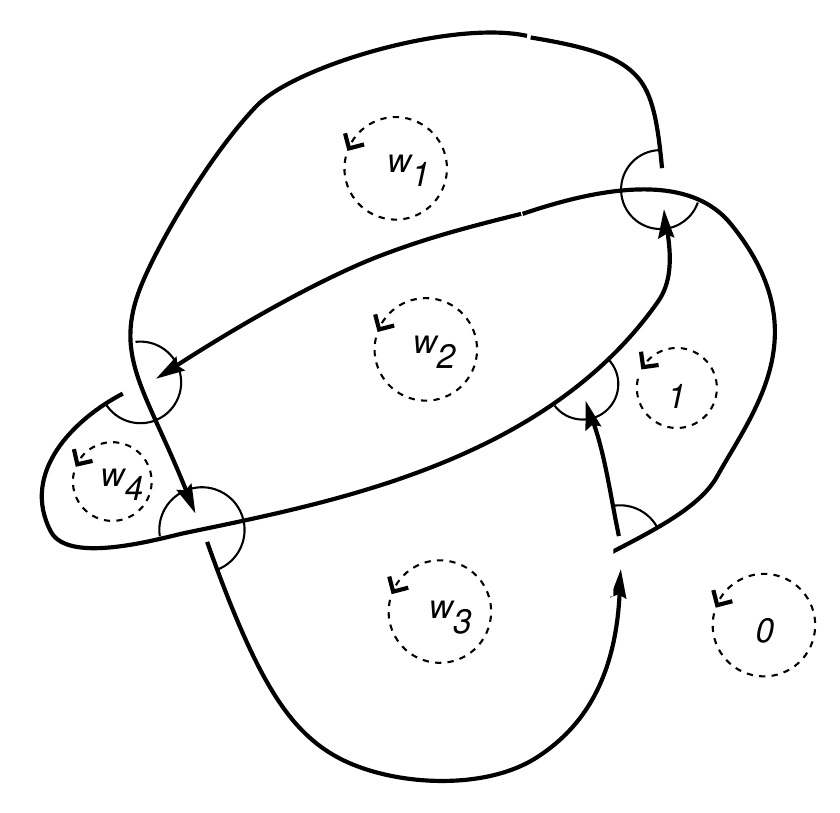}
  \caption{Assigning variables to each region}\label{reg}
\end{figure}

For each vertex of $G$, we assign the following functions according to the type of the vertex
and the horizontal edges.
For positive crossings :

%Positive crossing
  
  {\setlength{\unitlength}{0.4cm}
  \begin{picture}(18,4.5)\thicklines
    \put(5,4){\vector(-1,-1){4}}
    \put(1,4){\line(1,-1){1.8}}
    \put(3.2,1.8){\vector(1,-1){1.8}}
    \put(2.5,0){$w_j$}
    \put(4.5,1.5){$w_k$}
    \put(2.5,3.5){$w_l$}
    \put(0,1.5){$w_m$}
    \put(3,2){\arc(-0.6,0.6){270}}
    \put(6.5,3){: $P_1(w_j,w_k,w_l,w_m)=-\li(\frac{w_l}{w_m})-\li(\frac{w_l}{w_k})+\li(\frac{w_jw_l}{w_kw_m})$}
    \put(16,1){$+\li(\frac{w_m}{w_j})+\li(\frac{w_k}{w_j})-\frac{\pi^2}{6}+\log\frac{w_m}{w_j}\log\frac{w_k}{w_j},$}
  \end{picture}}

  {\setlength{\unitlength}{0.4cm}
  \begin{picture}(18,4.5)\thicklines
    \put(5,4){\vector(-1,-1){4}}
    \put(1,4){\line(1,-1){1.8}}
    \put(3.2,1.8){\vector(1,-1){1.8}}
    \put(2.5,0){$w_j$}
    \put(4.5,1.5){$w_k$}
    \put(2.5,3.5){$w_l$}
    \put(0,1.5){$w_m$}
    \put(3,2){\arc(-0.6,-0.6){270}}
    \put(6.5,3){: $P_2(w_j,w_k,w_l,w_m)=\li(\frac{w_m}{w_l})-\li(\frac{w_l}{w_k})-\li(\frac{w_kw_m}{w_jw_l})$}
    \put(16,1){$+\li(\frac{w_m}{w_j})-\li(\frac{w_j}{w_k})+\frac{\pi^2}{6}-\log\frac{w_k}{w_l}\log\frac{w_k}{w_j},$}
  \end{picture}}

  {\setlength{\unitlength}{0.4cm}
  \begin{picture}(18,4.5)\thicklines
    \put(5,4){\vector(-1,-1){4}}
    \put(1,4){\line(1,-1){1.8}}
    \put(3.2,1.8){\vector(1,-1){1.8}}
    \put(2.5,0){$w_j$}
    \put(4.5,1.5){$w_k$}
    \put(2.5,3.5){$w_l$}
    \put(0,1.5){$w_m$}
    \put(3,2){\arc(0.6,-0.6){270}}
    \put(6.5,3){: $P_3(w_j,w_k,w_l,w_m)=\li(\frac{w_m}{w_l})+\li(\frac{w_k}{w_l})+\li(\frac{w_jw_l}{w_kw_m})$}
    \put(16,1){$-\li(\frac{w_j}{w_m})-\li(\frac{w_j}{w_k})-\frac{\pi^2}{6}+\log\frac{w_m}{w_l}\log\frac{w_k}{w_l},$}
  \end{picture}}

  {\setlength{\unitlength}{0.4cm}
  \begin{picture}(18,4.5)\thicklines
    \put(5,4){\vector(-1,-1){4}}
    \put(1,4){\line(1,-1){1.8}}
    \put(3.2,1.8){\vector(1,-1){1.8}}
    \put(2.5,0){$w_j$}
    \put(4.5,1.5){$w_k$}
    \put(2.5,3.5){$w_l$}
    \put(0,1.5){$w_m$}
    \put(3,2){\arc(0.6,0.6){270}}
    \put(6.5,3){: $P_4(w_j,w_k,w_l,w_m)=-\li(\frac{w_l}{w_m})+\li(\frac{w_k}{w_l})-\li(\frac{w_kw_m}{w_jw_l})$}
    \put(16,1){$-\li(\frac{w_j}{w_m})+\li(\frac{w_k}{w_j})+\frac{\pi^2}{6}-\log\frac{w_m}{w_l}\log\frac{w_m}{w_j}.$}
  \end{picture}}

\vspace{0.5cm}

For negative crossings :
%Negative crossing

  {\setlength{\unitlength}{0.4cm}
  \begin{picture}(18,4.5)\thicklines
    \put(1,4){\vector(1,-1){4}}
   \put(5,4){\line(-1,-1){1.8}}
   \put(2.8,1.8){\vector(-1,-1){1.8}}
    \put(2.5,0){$w_j$}
    \put(4.5,1.5){$w_k$}
    \put(2.5,3.5){$w_l$}
    \put(0,1.5){$w_m$}
    \put(3,2){\arc(-0.6,0.6){270}}
    \put(6.5,3){: $N_1(w_j,w_k,w_l,w_m)=\li(\frac{w_l}{w_m})+\li(\frac{w_l}{w_k})-\li(\frac{w_jw_l}{w_kw_m})$}
    \put(16,1){$-\li(\frac{w_m}{w_j})-\li(\frac{w_k}{w_j})+\frac{\pi^2}{6}-\log\frac{w_j}{w_m}\log\frac{w_j}{w_k},$}

  \end{picture}}

  {\setlength{\unitlength}{0.4cm}
  \begin{picture}(18,4.5)\thicklines
    \put(1,4){\vector(1,-1){4}}
   \put(5,4){\line(-1,-1){1.8}}
   \put(2.8,1.8){\vector(-1,-1){1.8}}
    \put(2.5,0){$w_j$}
    \put(4.5,1.5){$w_k$}
    \put(2.5,3.5){$w_l$}
    \put(0,1.5){$w_m$}
    \put(3,2){\arc(-0.6,-0.6){270}}
    \put(6.5,3){: $N_2(w_j,w_k,w_l,w_m)=-\li(\frac{w_m}{w_l})+\li(\frac{w_l}{w_k})+\li(\frac{w_k w_m}{w_j w_l})$}
    \put(16,1){$-\li(\frac{w_m}{w_j})+\li(\frac{w_j}{w_k})-\frac{\pi^2}{6}+\log\frac{w_l}{w_k}\log\frac{w_j}{w_k},$}
  \end{picture}}

  {\setlength{\unitlength}{0.4cm}
  \begin{picture}(18,4.5)\thicklines
    \put(1,4){\vector(1,-1){4}}
   \put(5,4){\line(-1,-1){1.8}}
   \put(2.8,1.8){\vector(-1,-1){1.8}}
    \put(2.5,0){$w_j$}
    \put(4.5,1.5){$w_k$}
    \put(2.5,3.5){$w_l$}
    \put(0,1.5){$w_m$}
    \put(3,2){\arc(0.6,-0.6){270}}
    \put(6.5,3){: $N_3(w_j,w_k,w_l,w_m)=-\li(\frac{w_m}{w_l})-\li(\frac{w_k}{w_l})-\li(\frac{w_j w_l}{w_k w_m})$}
    \put(16,1){$+\li(\frac{w_j}{w_m})+\li(\frac{w_j}{w_k})+\frac{\pi^2}{6}-\log\frac{w_l}{w_m}\log\frac{w_l}{w_k},$}
  \end{picture}}

  {\setlength{\unitlength}{0.4cm}
  \begin{picture}(18,4.5)\thicklines
    \put(1,4){\vector(1,-1){4}}
   \put(5,4){\line(-1,-1){1.8}}
   \put(2.8,1.8){\vector(-1,-1){1.8}}
    \put(2.5,0){$w_j$}
    \put(4.5,1.5){$w_k$}
    \put(2.5,3.5){$w_l$}
    \put(0,1.5){$w_m$}
    \put(3,2){\arc(0.6,0.6){270}}
    \put(6.5,3){: $N_4(w_j,w_k,w_l,w_m)=\li(\frac{w_l}{w_m})-\li(\frac{w_k}{w_l})+\li(\frac{w_k w_m}{w_j w_l})$}
    \put(16,1){$+\li(\frac{w_j}{w_m})-\li(\frac{w_k}{w_j})-\frac{\pi^2}{6}+\log\frac{w_l}{w_m}\log\frac{w_j}{w_m}.$}
  \end{picture}}
 
\vspace{0.5cm}

If no horizontal edge is collapsed at the positive nor the negative crossing, 
we assign any of $P_1,\ldots,P_4$ or $N_1,\ldots,N_4$ to the crossing, respectively.
In Lemma \ref{lem1}, we will show this choice does not have any effect on the optimistic limit of the colored Jones polynomial.

For the endpoints of $I$ and $J$, we use the same formula disregarding whether certain horizontal edge is collapsed or not.
For the endpoint of $I$ :

%Trivalent
  {\setlength{\unitlength}{0.4cm}
  \begin{picture}(18,4.5)\thicklines
    \put(5,4){\vector(-1,-1){4}}
    \dashline{0.5}(3.2,1.8)(5,0)
    \put(4.8,0.2){\vector(1,-1){0.2}}
    \put(1,4){\line(1,-1){1.8}}
    \put(2.5,0){$w_j$}
%    \put(4,2){$w_k$}
    \put(2.5,3.5){$w_l$}
    \put(0.5,2){$w_m$}
    \put(7,2.5){: $P_1(w_j,w_j,w_l,w_m)=P_2(w_j,w_j,w_l,w_m)=\li(\frac{w_m}{w_j})-\li(\frac{w_l}{w_j}),$}
  \end{picture}}

  {\setlength{\unitlength}{0.4cm}
  \begin{picture}(18,4.5)\thicklines
    \put(1,4){\vector(1,-1){4}}
    \dashline{0.5}(1,0)(2.8,1.8)
    \put(1.2,0.2){\vector(-1,-1){0.2}}
    \put(5,4){\line(-1,-1){1.8}}
    \put(2.5,0){$w_j$}
    \put(4,2){$w_k$}
    \put(2.5,3.5){$w_l$}
%    \put(0.5,2){$w_m$}
    \put(7,2.5){: $N_1(w_j,w_k,w_l,w_j)=N_4(w_j,w_k,w_l,w_j)=-\li(\frac{w_k}{w_j})+\li(\frac{w_l}{w_j}).$}
  \end{picture}}

\vspace{0.5cm}

For the endpoint of $J$ :

  {\setlength{\unitlength}{0.4cm}
  \begin{picture}(18,4.5)\thicklines
   \put(3.2,1.8){\vector(1,-1){2}}
   \put(3,2){\vector(-1,-1){2}}
   \put(1,4){\line(1,-1){1.8}}
   \dashline{0.5}(3,2)(5,4)
    \put(2.5,0){$w_j$}
    \put(4,2){$w_k$}
%    \put(2.5,3.5){$w_l$}
    \put(0.5,2){$w_m$}
    \put(7,2.5){: $P_2(w_j,w_k,w_k,w_m)=P_3(w_j,w_k,w_k,w_m)=\li(\frac{w_m}{w_k})-\li(\frac{w_j}{w_k}),$}
  \end{picture}}

  {\setlength{\unitlength}{0.4cm}
  \begin{picture}(18,4.5)\thicklines
   \put(2.8,1.8){\vector(-1,-1){1.8}}
   \put(3.2,2.2){\line(1,1){1.8}}
   \put(3,2){\vector(1,-1){2}}
   \dashline{0.5}(1,4)(3,2)
    \put(2.5,0){$w_j$}
    \put(4,2){$w_k$}
    \put(2.5,3.5){$w_l$}
%    \put(0.5,2){$w_m$}
    \put(7,2.5){: $N_3(w_j,w_k,w_l,w_l)=N_4(w_j,w_k,w_l,w_l)=-\li(\frac{w_k}{w_l})+\li(\frac{w_j}{w_l}).$}
  \end{picture}}
\vspace{5mm}

In Appendix, we show that the assigned functions above are, in fact, obtained by the formal substitution of
certain forms of the R-matrix of the colored Jones polynomial.

Now we define the potential function $W(w_1,\ldots,w_{m})$ of the knot diagram by the summation of
all functions assigned to the vertices of $G$.
For example, the potential function $W(w_1,\ldots,w_4)$ of Figure \ref{reg} is
\begin{eqnarray}
&&W(w_1,\ldots,w_4)=-\li(\frac{1}{w_3})+
\left\{\li(\frac{1}{w_2})+\li(\frac{w_1}{w_2})-\frac{\pi^2}{6}+\log\frac{1}{w_2}\log\frac{w_1}{w_2}\right\}\label{W}\\
&&~~~+\left\{\li(\frac{w_1}{w_2})+\li(\frac{w_4}{w_2})-\frac{\pi^2}{6}+\log\frac{w_1}{w_2}\log\frac{w_4}{w_2}\right\}\nonumber\\
&&~~~+\left\{\li(\frac{w_4}{w_2})+\li(\frac{w_3}{w_2})-\frac{\pi^2}{6}+\log\frac{w_4}{w_2}\log\frac{w_3}{w_2}\right\}\nonumber
+\left\{\li(\frac{1}{w_2})-\li(\frac{w_3}{w_2})\right\}.
\end{eqnarray}

We end this section with the invariance of the optimistic limit
under the choice of the four different forms of the potential functions of a crossing.

\begin{lem}\label{lem1}
For the functions $P_1,\ldots,P_4,N_1,\ldots,N_4$ defined above, let
$$P_{f0}:=P_f-\sum_{a=j,k,l,m}\left(w_a\frac{\partial P_f}{\partial w_a}\right)\log w_a,~
N_{f0}:=N_f-\sum_{a=j,k,l,m}\left(w_a\frac{\partial N_f}{\partial w_a}\right)\log w_a.$$
Then $$P_{10}\equiv P_{20}\equiv P_{30}\equiv P_{40},
~ N_{10}\equiv N_{20}\equiv N_{30}\equiv N_{40}\modulos,$$
and for $a=j,k,l,m$,
\begin{eqnarray*}
\exp\left(w_a\frac{\partial P_1}{\partial w_a}\right)=\exp\left(w_a\frac{\partial P_2}{\partial w_a}\right)
=\exp\left(w_a\frac{\partial P_3}{\partial w_a}\right)=\exp\left(w_a\frac{\partial P_4}{\partial w_a}\right),\\
\exp\left(w_a\frac{\partial N_1}{\partial w_a}\right)=\exp\left(w_a\frac{\partial N_2}{\partial w_a}\right)
=\exp\left(w_a\frac{\partial N_3}{\partial w_a}\right)=\exp\left(w_a\frac{\partial N_4}{\partial w_a}\right).
\end{eqnarray*}
\end{lem}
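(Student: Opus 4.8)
The plan is to reduce everything to the four positive--crossing functions and then to a single elementary computation about the operation $F\mapsto F_0$. First I would observe that the negative--crossing functions are just the negatives of the positive ones: a direct inspection of the eight formulas shows $N_f=-P_f$ for $f=1,2,3,4$, the point being that the dilogarithm terms are negated while the two $\log$--$\log$ correction terms agree, since reversing both factors leaves the product unchanged. Because the operation $F\mapsto F_0$ is linear in $F$, this gives $N_{f0}=-P_{f0}$ and $\exp\left(w_a\frac{\partial N_f}{\partial w_a}\right)=\exp\left(-\,w_a\frac{\partial P_f}{\partial w_a}\right)$, so both assertions for the $N_f$ follow at once from the corresponding assertions for the $P_f$. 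It therefore suffices to treat $P_1,\dots,P_4$.

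Second, I would exploit the reciprocal--pair structure of the dilogarithm arguments. The arguments occurring across $P_1,\dots,P_4$ fall into five reciprocal pairs, namely $\{w_l/w_m,\,w_m/w_l\}$, $\{w_l/w_k,\,w_k/w_l\}$, $\{w_jw_l/(w_kw_m),\,w_kw_m/(w_jw_l)\}$, $\{w_m/w_j,\,w_j/w_m\}$ and $\{w_k/w_j,\,w_j/w_k\}$, and each $P_f$ uses exactly one representative from each pair. Hence in any difference $P_f-P_{f'}$ every dilogarithm either cancels outright or is paired with its inverse in the combination $\li(z)+\li(1/z)$, so that no five--term relation is needed. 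Applying the inversion relation
$$\li(z)+\li(1/z)=-\frac{\pi^2}{6}-\frac12\log^2(-z)$$
eliminates all dilogarithms and rewrites $P_f-P_{f'}$ as a polynomial of degree at most two in $\log w_j,\log w_k,\log w_l,\log w_m$. A direct expansion then shows that the purely quadratic part of this polynomial vanishes identically, so that, modulo the branch ambiguities of $\li$ and $\log$, each difference collapses to an affine function
$$P_f-P_{f'}\equiv\sum_{a=j,k,l,m}p_a\,\log w_a+e,$$
where the slopes satisfy $p_a\in 2\pi i\,\mathbb{Z}$ and the constant satisfies $e\equiv 0\modulos$.

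The two conclusions now drop out of an elementary observation. For an affine function $G=\sum_a p_a\log w_a+e$ one has $w_a\frac{\partial G}{\partial w_a}=p_a$, whence
$$G_0=G-\sum_a\left(w_a\frac{\partial G}{\partial w_a}\right)\log w_a=e,\qquad \exp\left(w_a\frac{\partial G}{\partial w_a}\right)=\exp(p_a).$$
Taking $G=P_f-P_{f'}$ and using linearity to write $P_{f0}-P_{f'0}=(P_f-P_{f'})_0$, the first identity gives $P_{f0}-P_{f'0}=e\equiv 0\modulos$, and the second gives $\exp\left(w_a\frac{\partial P_f}{\partial w_a}\right)=\exp\left(w_a\frac{\partial P_{f'}}{\partial w_a}\right)$, since $p_a\in 2\pi i\,\mathbb{Z}$ forces $\exp(p_a)=1$.

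The genuinely delicate step is the branch bookkeeping hidden in the second paragraph. In a formal computation that ignores multivaluedness, every $p_a$ and $e$ come out equal to zero, so the real content of the lemma is that the corrections produced by the monodromy of $\li$ and by the choice $\log(-z)=\log z\pm i\pi$ accumulate only to \emph{even} multiples of $i\pi$ in each slope $p_a$ and to multiples of $4\pi^2$ in $e$. Establishing these parities, rather than merely $p_a\in i\pi\,\mathbb{Z}$ and $e\in\pi^2\,\mathbb{Z}$ (which would only yield the weaker congruence modulo $\pi^2$ and would not force $\exp(p_a)=1$), is the main obstacle; it is precisely here that the specific sign conventions of $P_1,\dots,P_4$ — one representative per reciprocal pair, together with the matching $\log$--$\log$ terms — must be tracked with care.
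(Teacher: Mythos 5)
Your strategy coincides with the paper's own: reduce everything to differences of the $P_f$, eliminate all dilogarithms with the inversion identity $\li(z)+\li(\frac{1}{z})=-\frac{\pi^2}{6}-\frac{1}{2}\log^2(-z)$, and check that what survives is an admissible correction $\sum_a 2n_a\pi i\log w_a+4n\pi^2$. Two of your organizing observations are nice refinements of what the paper writes: the explicit reduction $N_f\approx-P_f$ (the paper appeals only to ``the symmetry of the equations''), and the fact that each $P_f$ contains exactly one representative of each of the five reciprocal pairs, so that \emph{every} difference $P_f-P_{f'}$ splits into outright cancellations and inversion pairs (the paper computes only $P_1-P_2$ and asserts the rest follow by the same method). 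Note, though, that $N_f=-P_f$ is only a formal identity: with the principal branch, $\log\frac{w_j}{w_m}\log\frac{w_j}{w_k}$ and $\log\frac{w_m}{w_j}\log\frac{w_k}{w_j}$ can differ by terms of the form $2\pi i\log\frac{w_a}{w_b}+4n\pi^2$, so what you actually have is equivalence up to exactly the corrections under discussion.

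The gap is that your second paragraph asserts --- and your final paragraph then concedes is unestablished --- precisely the statement that constitutes the entire content of the paper's proof. After the inversion identity, $P_1-P_2$ becomes
$$-\frac{\pi^2}{2}+\frac{1}{2}\log^2(-\frac{w_l}{w_m})-\frac{1}{2}\log^2(-\frac{w_kw_m}{w_jw_l})
-\frac{1}{2}\log^2(-\frac{w_k}{w_j})+\left(\log\frac{w_m}{w_j}+\log\frac{w_k}{w_l}\right)\log\frac{w_k}{w_j},$$
and the vanishing of its quadratic part is \emph{not} an identical polynomial cancellation: it needs the branch relation $\log\frac{w_k}{w_j}-\log(-\frac{w_kw_m}{w_jw_l})=\log(-\frac{w_l}{w_m})+2n\pi i$ among the three arguments, the absorption of the odd multiples of $\pi i$ produced by $\log(-z)=\log z+(2n\pm 1)\pi i$ into the cross term (turning $\log\frac{w_kw_m}{w_jw_l}\log\frac{w_k}{w_j}+\pi i\log\frac{w_k}{w_j}$ into $\log(-\frac{w_kw_m}{w_jw_l})\log\frac{w_k}{w_j}$ up to admissible terms), the discard of terms $2n\pi i\log\frac{w_a}{w_b}$, and a final completion of squares --- all with integer branch constants tracked so that the leftover slopes are \emph{even} multiples of $\pi i$ and the constant is a multiple of $4\pi^2$. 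That computation is the paper's proof; your proposal postpones it. Incidentally, your side claim that ``in a formal computation every $p_a$ and $e$ come out equal to zero'' is not right as stated: with the uniform convention $\log(-z)=\log z+i\pi$ applied to the representatives displayed above, one finds $P_1-P_2=2\pi i\left(\log w_j+\log w_l-\log w_k-\log w_m\right)$, which is admissible but nonzero, and the answer shifts by $4\pi i\log(\cdot)$ if the other representative of a pair is fed into $\log^2(-\,\cdot\,)$ --- a small illustration that even the formal step already requires the bookkeeping you deferred. (For the exponential equalities alone the delicacy disappears, since $\exp\left(w_a\frac{\partial P_f}{\partial w_a}\right)$ is a rational expression in the $w$'s that can be compared directly for $f=1,\ldots,4$; it is the congruence of the $P_{f0}$ modulo $4\pi^2$ that genuinely needs the parity argument.) So the proposal is a correct outline along the paper's own lines, but it does not prove the lemma until that verification is actually carried out for $P_1-P_2$ and then propagated through your pair and symmetry reductions.
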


\begin{proof} For a given complex-valued function $F(w_j,w_k,w_l,w_m)$, let
\begin{equation}\label{eq4}
\widehat{F}(w_j,w_k,w_l,w_m):=F+\sum_{a=j,k,l,m}2n_a\pi i \log w_a+4n\pi^2
\end{equation}
for some integer constants $n_j,n_k,n_l,n_m,n$.
Then by a direct calculation,
$$\widehat{F}_0\equiv F_0\modulos$$
and
$$\exp\left(w_a\frac{\partial F}{\partial w_a}\right)=\exp\left(w_a\frac{\partial \widehat{F}}{\partial w_a}\right).$$
These show $F$ and $\widehat{F}$ define the same optimistic limit, so we define an equivalence relation $\approx$ by
$F\approx \widehat{F}$ for $F$ and $\widehat{F}$ satisfying (\ref{eq4}).

For
\begin{eqnarray*}
P_1=-\li(\frac{w_l}{w_m})-\li(\frac{w_l}{w_k})+\li(\frac{w_jw_l}{w_kw_m})+\li(\frac{w_m}{w_j})+\li(\frac{w_k}{w_j})-\frac{\pi^2}{6}+\log\frac{w_m}{w_j}\log\frac{w_k}{w_j},\\
P_2=\li(\frac{w_m}{w_l})-\li(\frac{w_l}{w_k})-\li(\frac{w_k w_m}{w_j w_l})+\li(\frac{w_m}{w_j})-\li(\frac{w_j}{w_k})+\frac{\pi^2}{6}-\log\frac{w_k}{w_l}\log\frac{w_k}{w_j},
\end{eqnarray*}
using the well-known identity $\li(z)+\li(\frac{1}{z})\approx-\frac{\pi^2}{6}-\frac{1}{2}\log^2(-z)$
for $z\in\mathbb{C}$ in \cite{Lewin1}, we obtain
\begin{eqnarray*}
\lefteqn{P_1-P_2=-\li(\frac{w_l}{w_m})-\li(\frac{w_m}{w_l})+\li(\frac{w_jw_l}{w_kw_m})+\li(\frac{w_k w_m}{w_j w_l})}\\
&&~~+\li(\frac{w_k}{w_j})+\li(\frac{w_j}{w_k})-\frac{\pi^2}{3}
+\left(\log\frac{w_m}{w_j}+\log\frac{w_k}{w_l}\right)\log\frac{w_k}{w_j}\\
&&\approx-\frac{\pi^2}{2}+\frac{1}{2}\log^2(-\frac{w_l}{w_m})-\frac{1}{2}\log^2(-\frac{w_kw_m}{w_jw_l})
-\frac{1}{2}\log^2(-\frac{w_k}{w_j})+\left(\log\frac{w_m}{w_j}+\log\frac{w_k}{w_l}\right)\log\frac{w_k}{w_j}.
\end{eqnarray*}
For any integer $n$, some integers $n_1,\ldots,n_4$ and indices $a,b\in\{i,j,k,l\}$, we have
\begin{eqnarray*}
\lefteqn{2n\pi i\log\frac{w_a}{w_b}=2n\pi i\left(\log w_a-\log w_b+2n_1\pi i\right)\approx 0,}\\
\lefteqn{\frac{1}{2}\log^2(-\frac{w_k}{w_j})=\frac{1}{2}\left\{\log\frac{w_k}{w_j}+(2n_2-1)\pi i\right\}^2}\\
&&=\frac{1}{2}\log^2\frac{w_k}{w_j}+(2n_2-1)\pi i\log\frac{w_k}{w_j}-2n_2(n_2-1)\pi^2-\frac{\pi^2}{2}\\
&&\approx\frac{1}{2}\log^2\frac{w_k}{w_j}-\pi i\log\frac{w_k}{w_j}-\frac{\pi^2}{2}
\end{eqnarray*}
and
\begin{eqnarray*}
\lefteqn{\frac{1}{2}\left\{\log\frac{w_k}{w_j}-\log(-\frac{w_kw_m}{w_jw_l})\right\}^2
=\frac{1}{2}\left\{\log(-\frac{w_l}{w_m})+2n_3\pi i\right\}^2}\\
&&=\frac{1}{2}\log^2(-\frac{w_l}{w_m})+2n_3\pi i\left\{\log\frac{w_l}{w_m}+(2n_4+1)\pi i\right\}-2n_3^2\pi^2\\
&&\approx\frac{1}{2}\log^2(-\frac{w_l}{w_m})-2n_3(n_3+1)\pi^2\approx\frac{1}{2}\log^2(-\frac{w_l}{w_m}).
\end{eqnarray*}
Therefore, we obtain
\begin{eqnarray*}
\lefteqn{P_1-P_2}\\
&&\approx\frac{1}{2}\log^2(-\frac{w_l}{w_m})-\frac{1}{2}\log^2(-\frac{w_kw_m}{w_jw_l})
-\frac{1}{2}\log^2\frac{w_k}{w_j}+\pi i\log\frac{w_k}{w_j}
+\log\frac{w_kw_m}{w_jw_l}\log\frac{w_k}{w_j}\\
&&\approx\frac{1}{2}\log^2(-\frac{w_l}{w_m})-\frac{1}{2}\log^2(-\frac{w_kw_m}{w_jw_l})
-\frac{1}{2}\log^2\frac{w_k}{w_j}+\log(-\frac{w_kw_m}{w_jw_l})\log\frac{w_k}{w_j}\\
&&=\frac{1}{2}\log^2(-\frac{w_l}{w_m})
-\frac{1}{2}\left\{\log\frac{w_k}{w_j}-\log(-\frac{w_kw_m}{w_jw_l})\right\}^2
\approx\frac{1}{2}\log^2(-\frac{w_l}{w_m})-\frac{1}{2}\log^2(-\frac{w_l}{w_m})=0.
\end{eqnarray*}
Other equalities $P_2\approx P_3\approx P_4$ and $N_1\approx N_2\approx N_3\approx N_4$ can be obtained
by the same method or by the symmetry of the equations.
\end{proof}

\section{Geometric structures of the triangulations}\label{ch4}

For Yokota triangulation and Thurston triangulation, we assign complex variables to each tetrahedra
and solve certain equations. Then one of the solutions gives the complete hyperbolic structure of the knot complement.
We describe these procedures in this section.

First, consider the positive and negative crossings in Figure \ref{label},
where $z_a, z_b, z_c, z_d$ are the variables assigned to the sides of $G$ 
and $w_j,w_k,w_l,w_m$ are the variables assigned to the regions of $G$.
Note that $z_a, z_b, z_c, z_d$ and $w_j,w_k,w_l,w_m$ are used for defining the potential functions $V(z_1,\ldots,z_g)$
and $W(w_1,\ldots,w_m)$, respectively.

\begin{figure}[h]
\centering
  \setlength{\unitlength}{0.4cm}
  \begin{picture}(8,5.5)\thicklines
    \put(6,5){\vector(-1,-1){4}}
    \put(2,5){\line(1,-1){1.8}}
    \put(4.2,2.8){\vector(1,-1){1.8}}
    \put(3.5,1){$w_j$}
    \put(6,2.5){$w_k$}
    \put(3.5,4.5){$w_l$}
    \put(1,2.5){$w_m$}
    \put(1,5.3){$z_d$}
    \put(6,5.3){$z_c$}
    \put(1,0.2){$z_a$}
    \put(6,0.2){$z_b$}
  \end{picture}
  \begin{picture}(8,5.5)\thicklines
    \put(2,5){\vector(1,-1){4}}
   \put(6,5){\line(-1,-1){1.8}}
   \put(3.8,2.8){\vector(-1,-1){1.8}}
    \put(3.5,1){$w_j$}
    \put(6,2.5){$w_k$}
    \put(3.5,4.5){$w_l$}
    \put(1,2.5){$w_m$}
    \put(1,5.3){$z_d$}
    \put(6,5.3){$z_c$}
    \put(1,0.2){$z_a$}
    \put(6,0.2){$z_b$}
  \end{picture}\caption{Assignment of variables}\label{label}
\end{figure}
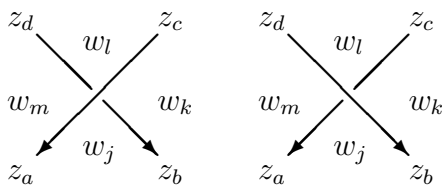

Then consider Figure \ref{pic9}.
We assign $\frac{z_b}{z_a}, \frac{z_c}{z_b}, \frac{z_d}{z_c}$, $\frac{z_a}{z_d}$ to the horizontal edges
${\rm C}_n{\rm D}_n$, ${\rm D}_n{\rm A}_n$, ${\rm A}_n{\rm B}_n$, ${\rm B}_n{\rm C}_n$, respectively.
This assignment determines the shape parameters of the tetrahedra of Yokota triangulation.
Also, for the positive crossing, we assign $\left(\frac{w_j}{w_m}\right)^{-1}$, $\frac{w_k}{w_j}$,
$\frac{w_k}{w_l}$, $\left(\frac{w_l}{w_m}\right)^{-1}$ to
${\rm C}_n{\rm F}_n$, ${\rm D}_n{\rm E}_n$,
${\rm A}_n{\rm F}_n$, ${\rm B}_n{\rm E}_n$, respectively, and
assign $\left(\frac{w_kw_m}{w_jw_l}\right)^{-1}$ to ${\rm B}_n{\rm D}_n$ and ${\rm A}_n{\rm C}_n$
for the parameter of the tetrahedron ${\rm A}_n{\rm B}_n{\rm C}_n{\rm D}_n$.
For the negative crossing, we assign
$\frac{w_j}{w_m}$, $\left(\frac{w_k}{w_j}\right)^{-1}$,
$\left(\frac{w_k}{w_l}\right)^{-1}$, $\frac{w_l}{w_m}$ to
${\rm B}_n{\rm E}_n$, ${\rm C}_n{\rm F}_n$, ${\rm D}_n{\rm E}_n$, ${\rm A}_n{\rm F}_n$,
respectively, and
assign $\left(\frac{w_jw_l}{w_kw_m}\right)^{-1}$ to ${\rm B}_n{\rm D}_n$ and ${\rm A}_n{\rm C}_n$
for the parameter of the tetrahedron ${\rm A}_n{\rm B}_n{\rm C}_n{\rm D}_n$. These assignments 
determine the shape parameters of the tetrahedra of Thurston triangulation.

\begin{figure}[h]
\centering
  \subfigure[Positive crossing]
  {\includegraphics[scale=0.6]{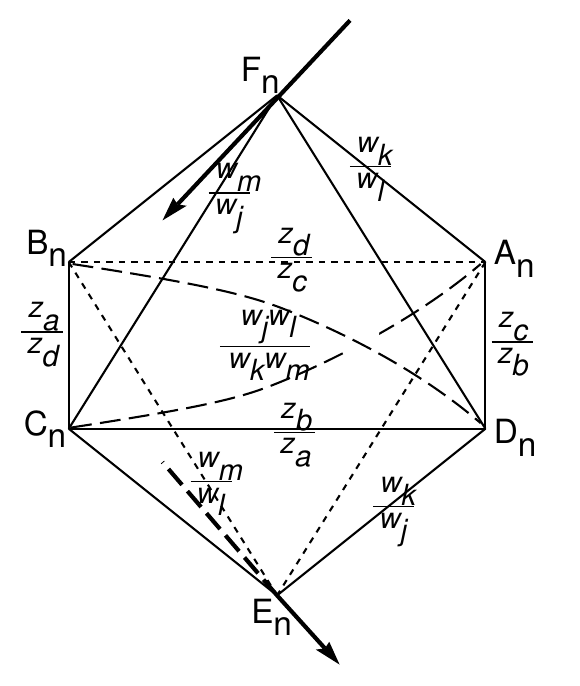}}\hspace{0.5cm}
  \subfigure[Negative crossing]
  {\includegraphics[scale=0.6]{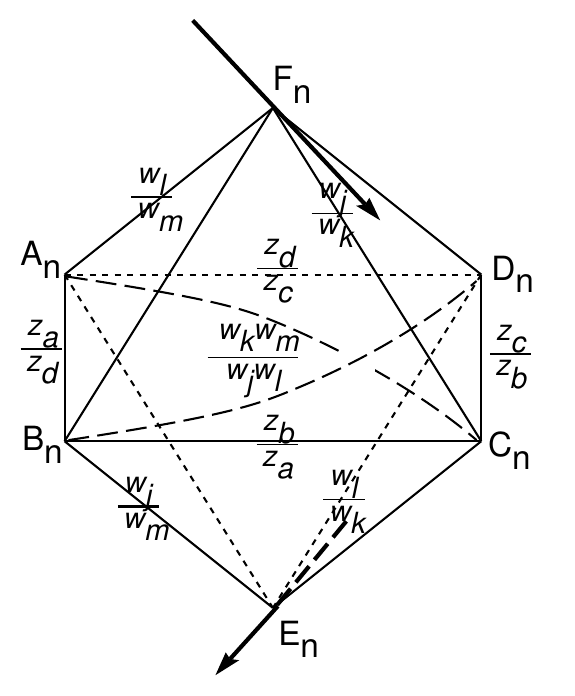}}
  \caption{Assignment of shape parameters}\label{pic9}
\end{figure}

We do not assign any shape parameters to the collapsed edges.
Also, in the case of Thurston triangulation, we do not assign any shape parameters to the edges that contain
the endpoints of the collapsed edges. For example, if ${\rm C}_n{\rm D}_n$ is collapsed,
then we do not assign any shape parameters to ${\rm C}_n{\rm F}_n$,
${\rm D}_n{\rm E}_n$ nor ${\rm B}_n{\rm D}_n$.
Also, if ${\rm D}_n{\rm E}_n$ is collapsed in Figure \ref{pic9}(a),
then we do not assign any shape parameters to ${\rm B}_n{\rm D}_n$, ${\rm B}_n{\rm E}_n$,
${\rm C}_n{\rm D}_n$ nor ${\rm D}_n{\rm A}_n$.\footnote{
The edges ${\rm C}_n{\rm D}_n$ and ${\rm D}_n{\rm A}_n$ are horizontal edges,
but are identified to non-horizontal edges. When this happens, we do not assign shape parameters to these edges.}

Yokota and Thurston triangulations are ideal triangulations, so by assigning shape parameters,
we can determine all the shapes of the hyperbolic ideal tetrahedra of the triangulations.
Note that if we assign a shape parameter $u\in\mathbb{C}-\{0,1\}$ to an edge of an ideal tetrahedron,
then other edges are also parametrized by $u, u':=\frac{1}{1-u}$ and $u'':=1-\frac{1}{u}$
as in Figure \ref{pic10}.

\begin{figure}[h]
\begin{center}
  {\setlength{\unitlength}{0.4cm}
  \begin{picture}(12,10)\thicklines
   \put(1,1){\line(1,0){8}}
   \put(1,1){\line(1,2){4}}
   \put(5,9){\line(1,-2){4}}
   \put(9,1){\line(1,2){2}}
   \put(5,9){\line(3,-2){6}}
   \dashline{0.5}(1,1)(11,5)
   \put(0,0){A}
   \put(9,0){B}
   \put(11,5){C}
   \put(4.5,9.5){D}
   \put(5,0){$u$}
   \put(8,7){$u$}
   \put(2.2,5){$u'$}
   \put(10.5,3){$u'$}
   \put(5,3){$u''$}
   \put(7,5){$u''$}
  \end{picture}}
  \caption{Parametrization of a hyperbolic ideal tetrahedron with shape parameter $u$}\label{pic10}
\end{center}
\end{figure}
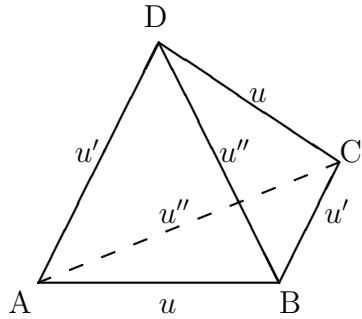

So as to get the hyperbolic structure,
these shape parameters should satisfy the {\it edge relations} and the {\it cusp conditions}. The edge relations mean
the product of all shape parameters assigned to each edge should be 1, and the cusp conditions mean
the holonomies induced by the longitude and the meridian should be translations on the cusp.
These two conditions can be expressed by a set of equations of the shape parameters, and
we call this set of equations {\it hyperbolicity equations}
(for details, see Chapter 4 of \cite{Thurston}).
We call a solution $(z_1,\ldots,z_g)$ of the hyperbolicity equations
of Yokota triangulation {\it essential}
if none of the shape parameters of the tetrahedra are one of $0,1,\infty$.
We also define an {\it essential solution} $(w_1,\ldots,w_m)$ of Thurston triangulation in the same way.
It is a well-known fact that if the hyperbolicity equations have an essential solution, 
then they have the unique solution which gives the hyperbolic structure to the triangulation\footnote{
Strictly speaking, we have unique values of shape parameters.
However, these values uniquely determine the solutions $(z_1^{(0)},\ldots,z_g^{(0)})$ and
$(w_1^{(0)},\ldots,w_m^{(0)})$. This was explained in \cite{Yokota10} for Yokota triangulation,
which will be at the end of this section for Thurston triangulation.} 
(for details, see Section 2.8 of \cite{Tillmann05}).
We call this unique solution {\it the geometric solution}, 
and denote the geometric solution of Yokota triangulation by $\bold{z}^{(0)}=(z_1^{(0)},\ldots,z_g^{(0)})$
and that of Thurston triangulation by $\bold{w}^{(0)}=(w_1^{(0)},\ldots,w_m^{(0)})$
We remark that, in Theorem \ref{thm},
we assumed the existence of the geometric solutions $\bold{z}^{(0)}$ and $\bold{w}^{(0)}$.

Yokota proved in \cite{Yokota10} that, for the potential function $V$ defined in Section \ref{ch31},\\
$\mathcal{H}_1=\left\{\exp\left(z_k\frac{\partial V}{\partial z_k}\right)=1~\vert~ k=1,\ldots,g\right\}$ becomes
the hyperbolicity equations of Yokota triangulation.
In other words, each element of $\mathcal{H}_1$
becomes an edge relation or a cusp condition
for all $k=1,\ldots,g$, and all other equations are trivially induced from the elements of $\mathcal{H}_1$.

Proposition \ref{prop11} shows the same holds for the potential function $W$ defined in Section \ref{ch32} and
$\mathcal{H}_2=\left\{\exp\left(w_l\frac{\partial W}{\partial w_l}\right)=1~\vert~ l=1,\ldots,m\right\}$.
We prove this in this section.

Let $\mathcal{A}$ be the set of non-collapsed horizontal edges of Thurston triangulation of $S^3-K$.
Let $\mathcal{B}$ be the set of non-collapsed non-horizontal edges
${\rm A}_n{\rm E}_n$, ${\rm B}_n{\rm E}_n$, ${\rm C}_n{\rm E}_n$, ${\rm D}_n{\rm E}_n$,
${\rm A}_n{\rm F}_n$, ${\rm B}_n{\rm F}_n$, ${\rm C}_n{\rm F}_n$, ${\rm D}_n{\rm F}_n$
in Figure \ref{pic9}, which are not in $\mathcal{A}$.\footnote{
Collapsing may identify some horizontal edges to non-horizontal edges.
In this case, we put these identified edges in $\mathcal{A}$.}
Finally, let $\mathcal{C}$ be the set of edges
${\rm A}_n{\rm C}_n$, ${\rm B}_n{\rm D}_n$ in Figure \ref{pic9}, which are not in $\mathcal{A\cup B}$.

For example, in Figure \ref{pic3},
$\mathcal{A}=\left\{\right.{\rm A}_7{\rm B}_7={\rm B}_6{\rm C}_6={\rm D}_2{\rm A}_2={\rm D}_2{\rm F}_2
={\rm A}_2{\rm B}_2={\rm B}_2{\rm F}_2={\rm C}_2{\rm F}_2={\rm A}_3{\rm F}_3={\rm B}_3{\rm F}_3={\rm D}_3{\rm F}_3
={\rm D}_5{\rm E}_5$, ${\rm D}_6{\rm A}_6={\rm B}_5{\rm C}_5$,
${\rm C}_6{\rm D}_6={\rm C}_5{\rm D}_5={\rm C}_3{\rm D}_3={\rm D}_7{\rm A}_7={\rm A}_7{\rm E}_7
={\rm C}_7{\rm D}_7={\rm C}_7{\rm E}_7={\rm A}_2{\rm E}_2={\rm C}_2{\rm E}_2={\rm B}_2{\rm E}_2
={\rm A}_6{\rm E}_6={\rm B}_6{\rm E}_6={\rm C}_6{\rm E}_6={\rm C}_5{\rm F}_5$,
${\rm D}_5{\rm A}_5={\rm B}_3{\rm C}_3$,
$\left.{\rm C}_2{\rm D}_2={\rm B}_7{\rm C}_7={\rm D}_3{\rm A}_3\right\}$,
$\mathcal{B}=\left\{\right.{\rm D}_3{\rm E}_3={\rm B}_7{\rm F}_7={\rm D}_7{\rm F}_7
={\rm A}_6{\rm F}_6={\rm B}_6{\rm F}_6={\rm D}_6{\rm F}_6
={\rm B}_5{\rm E}_5={\rm C}_5{\rm E}_5={\rm A}_5{\rm E}_5={\rm C}_3{\rm F}_3$,
${\rm A}_7{\rm F}_7={\rm C}_6{\rm F}_6$,
${\rm D}_6{\rm E}_6={\rm B}_5{\rm F}_5={\rm D}_5{\rm F}_5={\rm A}_5{\rm F}_5
={\rm A}_3{\rm E}_3={\rm B}_3{\rm E}_3={\rm C}_3{\rm E}_3={\rm C}_7{\rm F}_7$,
${\rm B}_7{\rm E}_7={\rm D}_2{\rm E}_2\left.\right\}$
and $\mathcal{C}=\emptyset$.

\begin{lem}\label{lem41}
For a hyperbolic knot $K$ with a fixed diagram, we assume the assumptions of Proposition \ref{prop11}.
Then the edges in $\mathcal{B}\cup\mathcal{C}$ satisfy the edge relations trivially
by the assigning rule of the shape parameters.
\end{lem}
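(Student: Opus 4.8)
The plan is to check, for each edge $e\in\mathcal{B}\cup\mathcal{C}$, that the product of the shape parameters assigned to the corners of the ideal tetrahedra of Thurston triangulation surrounding $e$ is identically $1$ as a function of $w_1,\ldots,w_m$, so that the corresponding edge relation imposes no condition and is satisfied before any equation of $\mathcal{H}_2$ is used. What makes this feasible is that, by the rule in Figure \ref{pic9}, every parameter attached to a non-horizontal edge or to one of ${\rm A}_n{\rm C}_n,{\rm B}_n{\rm D}_n$ is a Laurent monomial in the region variables $w_j,w_k,w_l,w_m$ around the relevant crossing. I would also use throughout that opposite edges of an ideal tetrahedron carry the same parameter and that the three parameters $u,u',u''$ of one tetrahedron satisfy $uu'u''=-1$.

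First I would dispose of the diagonal edges in $\mathcal{C}$, that is ${\rm A}_n{\rm C}_n$ and ${\rm B}_n{\rm D}_n$. Each is interior to the octahedron over the single crossing $n$ and, in the five-tetrahedron subdivision, is bounded by exactly three tetrahedra. For instance, at a positive crossing ${\rm B}_n{\rm D}_n$ lies in ${\rm A}_n{\rm B}_n{\rm D}_n{\rm F}_n$, ${\rm B}_n{\rm C}_n{\rm D}_n{\rm F}_n$ and ${\rm A}_n{\rm B}_n{\rm C}_n{\rm D}_n$; since ${\rm B}_n{\rm D}_n$ is opposite to ${\rm A}_n{\rm F}_n$ in the first and to ${\rm C}_n{\rm F}_n$ in the second, the three parameters at this edge are $\frac{w_k}{w_l}$, $\frac{w_m}{w_j}$ and $\frac{w_jw_l}{w_kw_m}$, whose product
\[
\frac{w_k}{w_l}\cdot\frac{w_m}{w_j}\cdot\frac{w_jw_l}{w_kw_m}=1
\]
is immediate. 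The computation for ${\rm A}_n{\rm C}_n$, and the analogous ones at a negative crossing, are identical up to relabelling, so the case $\mathcal{C}$ follows purely from the assignment.

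Next come the non-horizontal edges in $\mathcal{B}$. Such an edge runs from $\infty$ or $-\infty$ to $\ell$ and is in general shared by tetrahedra lying in two neighbouring octahedra that are glued together along the knot diagram. I would track the full cycle of tetrahedra around each such edge, distinguishing the \emph{twisted} edges ${\rm A}_n{\rm E}_n={\rm C}_n{\rm E}_n$ and ${\rm B}_n{\rm F}_n={\rm D}_n{\rm F}_n$, which already meet two tetrahedra inside a single octahedron, from the remaining edges ${\rm A}_n{\rm F}_n,{\rm C}_n{\rm F}_n,{\rm B}_n{\rm E}_n,{\rm D}_n{\rm E}_n$, each of which meets only one tetrahedron of its own octahedron. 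In every case the decisive fact is that two adjacent crossings share the region variables sitting in the faces that are glued, so that the monomial contributed from one side is cancelled by the reciprocal monomial from the other; using $uu'u''=-1$ to rewrite the parameters at ${\rm B}_n{\rm F}_n$ and ${\rm D}_n{\rm F}_n$ (and the other twisted pairs) in terms of the single assigned parameter, the product around the edge telescopes to $1$.

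I expect the main obstacle to be the bookkeeping of this telescoping: correctly reading off, from the face gluings following the orientation of the diagram, which tetrahedra and which corners actually surround a given edge of $\mathcal{B}$, and matching the region variables across each gluing. A secondary difficulty is the behaviour near collapsed edges, where, as noted after Figure \ref{pic9}, some edges of $\mathcal{B}$ are suppressed or a horizontal edge becomes identified with a non-horizontal one and is thereby moved into $\mathcal{A}$; these degenerate configurations, controlled by \textbf{Assumption 1.1.--1.4.}, would have to be enumerated exactly as in (Case 1) and (Case 2) of Observation \ref{obs} so as to be sure that no genuine relation is hidden among the edges of $\mathcal{B}\cup\mathcal{C}$.
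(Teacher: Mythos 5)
Your treatment of $\mathcal{C}$ is correct and complete: the three parameters meeting the diagonal ${\rm B}_n{\rm D}_n$ (resp.\ ${\rm A}_n{\rm C}_n$) are indeed $\frac{w_k}{w_l}$, $\frac{w_m}{w_j}$ and $\frac{w_jw_l}{w_kw_m}$ via opposite-edge equality, and their product is identically $1$; this is exactly the computation the paper leaves implicit. The problem is the $\mathcal{B}$ case, which is where the lemma actually lives, and there your proposal is a plan rather than a proof. You state that ``the monomial contributed from one side is cancelled by the reciprocal monomial from the other'' so that ``the product around the edge telescopes to $1$,'' and then you defer the verification as bookkeeping. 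But that cancellation mechanism is only literally true in the configurations of Figure \ref{pic11}(a),(b), where the strand passes the two adjacent crossings the same way and the edge in question carries exactly the two parameters $w_a/w_b$ and $w_b/w_a$ (and where, as the paper notes, the two incident tetrahedra in fact cancel as a pair).

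In the mixed configuration of Figure \ref{pic11}(c),(d) — strand over at $n_1$, under at $n_2$ — no reciprocal pairing occurs and no telescoping to $1$ is available piecewise: the portion of the link of ${\rm A}_{n_1}{\rm F}_{n_1}$ coming from the segment between $n_1$ and $n_2$ is an annulus in the cusp diagram whose corner-product at the relevant vertex is $\frac{w_a}{w_b}\bigl(\frac{w_a}{w_b}\bigr)'\bigl(\frac{w_a}{w_b}\bigr)''=-1$, not $1$. The edge relation closes up only because the link of such an edge decomposes into exactly two of these annuli — the one between $n_1$ and $n_2$ and the ``previous'' one between $n_2$ and the next over-crossing $n_3$ — so the total product is $(-1)(-1)=1$. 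Making this work requires three things your proposal does not supply: (i) the existence of $n_3$ (if it failed, the edge would lie in $\mathcal{A}$, contradicting the hypothesis); (ii) the fact that any type-(a) crossings lying between $n_2$ and $n_3$ contribute mutually cancelling triangle pairs, so they do not disturb the annulus; and (iii) the degenerate situation where $w_a$ or $w_b$ is the unbounded region, in which the annuli collapse to edges and one must pass to the neighbouring annuli. Identifying this two-annulus structure, with its $(-1)\times(-1)$ mechanism, is the key idea of the paper's proof; calling it bookkeeping and postponing it leaves the $\mathcal{B}$ half of the lemma unproved.
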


\begin{proof}
If an edge ${\rm A}_n{\rm C}_n$ or ${\rm B}_n{\rm D}_n$ of Figure \ref{pic9} is in $\mathcal{C}$, then the octahedron
${\rm A}_n{\rm B}_n{\rm C}_n{\rm D}_n{\rm E}_n{\rm F}_n$ does not have any collapsed edge.
By the assigning rule of the shape parameters, all the edges in $\mathcal{C}$ satisfy edge relations trivially.

Now  we show the case of $\mathcal{B}$.
Consider the following four cases of two points $n_1$ and $n_2$ in Figure \ref{pic11}
and the two regions between the crossings parametrized by the variables $w_a$ and $w_b$
(for the positions of the points ${\rm A}_{n_1}, {\rm B}_{n_1}, \ldots,{\rm F}_{n_2}$, see Figure \ref{pic2}).
First, we assume no edges are collapsed in the tetrahedra 
${\rm A}_{n_1}{\rm B}_{n_1}{\rm D}_{n_1}{\rm F}_{n_1}$ and ${\rm C}_{n_2}{\rm B}_{n_2}{\rm D}_{n_2}{\rm F}_{n_2}$.
This means the two regions with $w_a$ and $w_b$ in Figure \ref{pic11} are bounded.

\begin{figure}[h]
\centering
  \subfigure[]
  {\begin{picture}(5,2)\thicklines
   \put(5,1){\vector(-1,0){5}}
   \put(1,2){\line(0,-1){0.8}}
   \put(1,0){\line(0,1){0.8}}
   \put(4,2){\line(0,-1){0.8}}
   \put(4,0){\line(0,1){0.8}}
   \put(1.2,0.6){$n_1$}
   \put(4.1,0.6){$n_2$}
   \put(2.3,1.4){$w_b$}
   \put(2.3,0.2){$w_a$}
  \end{picture}}\hspace{0.5cm}
  \subfigure[]
  {\begin{picture}(5,2)\thicklines
   \put(4.2,1){\line(1,0){0.8}}
   \put(1,2){\line(0,-1){2}}
   \put(0.8,1){\vector(-1,0){0.8}}
   \put(4,2){\line(0,-1){2}}
   \put(1.2,1){\line(1,0){2.6}}
   \put(1.2,0.6){$n_1$}
   \put(4.1,0.6){$n_2$}
   \put(2.3,1.4){$w_b$}
   \put(2.3,0.2){$w_a$}
  \end{picture}}\\
  \subfigure[]
  {\begin{picture}(5,2)\thicklines
   \put(4.2,1){\line(1,0){0.8}}
   \put(1,2){\line(0,-1){0.8}}
   \put(3.8,1){\vector(-1,0){3.8}}
   \put(4,2){\line(0,-1){2}}
   \put(1,0){\line(0,1){0.8}}
   \put(1.2,0.6){$n_1$}
   \put(4.1,0.6){$n_2$}
   \put(2.3,1.4){$w_b$}
   \put(2.3,0.2){$w_a$}
  \end{picture}}\hspace{0.5cm}
  \subfigure[]
  {\begin{picture}(5,2)\thicklines
   \put(1.2,1){\line(1,0){3.8}}
   \put(1,2){\line(0,-1){2}}
   \put(4,0){\line(0,1){0.8}}
   \put(4,2){\line(0,-1){0.8}}
   \put(0.8,1){\vector(-1,0){0.8}}
   \put(1.2,0.6){$n_1$}
   \put(4.1,0.6){$n_2$}
   \put(2.3,1.4){$w_b$}
   \put(2.3,0.2){$w_a$}
  \end{picture}}
  \caption{Four cases}\label{pic11}
\end{figure}
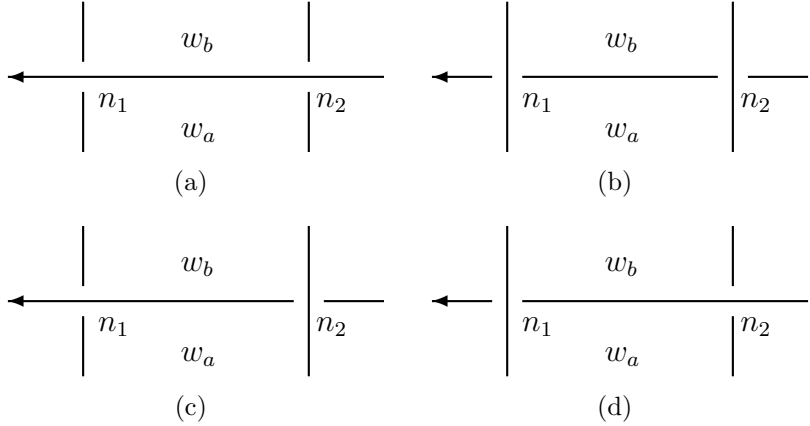

In the case of Figure \ref{pic11}(a), we want to prove that the edge relation of the edge 
${\rm A}_{n_1}{\rm F}_{n_1}={\rm C}_{n_2}{\rm F}_{n_2}\in\mathcal{B}$ holds trivially.
We draw a part of the cusp diagram in
${\rm A}_{n_1}{\rm B}_{n_1}{\rm D}_{n_1}{\rm F}_{n_1}\cup
{\rm C}_{n_2}{\rm B}_{n_2}{\rm D}_{n_2}{\rm F}_{n_2}$ near ${\rm F}_{n_1}={\rm F}_{n_2}$ as in Figure \ref{pic12}.
Our tetrahedra are all ideal, so the triangles $\triangle \alpha_1\alpha_2\alpha_3$
and $\triangle \alpha_1\alpha_4\alpha_5$ are Euclidean. Note that
$\alpha_1, \ldots,\alpha_5$ are points in the edges
${\rm A}_{n_1}{\rm F}_{n_1}={\rm C}_{n_2}{\rm F}_{n_2}$, ${\rm B}_{n_1}{\rm F}_{n_1}$,
${\rm D}_{n_1}{\rm F}_{n_1}$, ${\rm D}_{n_2}{\rm F}_{n_2}$, ${\rm B}_{n_2}{\rm F}_{n_2}$, respectively.
Furthermore, edges $\alpha_1\alpha_2$ and $\alpha_1\alpha_3$ are identified to $\alpha_1\alpha_5$ and to $\alpha_1\alpha_4$,
respectively.\footnote{
In fact, edges $\alpha_2\alpha_3$ and $\alpha_5\alpha_4$ are also identified,
so the two triangles are cancelled by each other. This means the corresponding tetrahedra
${\rm A}_{n_1}{\rm B}_{n_1}{\rm D}_{n_1}{\rm F}_{n_1}$ and
${\rm C}_{n_2}{\rm B}_{n_2}{\rm D}_{n_2}{\rm F}_{n_2}$
are cancelled by each other.}
On the edge ${\rm A}_{n_1}{\rm F}_{n_1}={\rm C}_{n_2}{\rm F}_{n_2}$,
two shape parameters $w_a/w_b$ and $w_b/w_a$ are assigned respectively by the assigning rule,
so the edge relation of ${\rm A}_{n_1}{\rm F}_{n_1}={\rm C}_{n_2}{\rm F}_{n_2}\in\mathcal{B}$ holds trivially.

\begin{figure}[h]
\centering
\begin{picture}(4,2)
  {\thicklines\put(0,0){\line(2,1){4}}
  \put(0,0){\line(0,1){2}}
  \put(0,2){\line(2,-1){4}}
  \put(4,0){\line(0,1){2}}}
  \put(2,0.6){$\alpha_1$}
  \put(-0.5,2){$\alpha_2$}
  \put(-0.5,0){$\alpha_3$}
  \put(4.2,0){$\alpha_4$}
  \put(4.2,2){$\alpha_5$}
  \put(0.5,0.9){$w_a/w_b$}
  \put(2.3,0.9){$w_b/w_a$}
  \put(0.7,1.5){$|$}
  \put(3.3,1.5){$|$}
  \put(0.7,0.3){$||$}
  \put(3.2,0.3){$||$}
  \put(-0.2,1){$\equiv$}
  \put(3.8,1){$\equiv$}
\end{picture}\caption{Part of the cusp diagram of Figure \ref{pic11}(a)}\label{pic12}
\end{figure}

In the case of Figure \ref{pic11}(c), we want to prove that the edge relation of ${\rm A}_{n_1}{\rm F}_{n_1}\in\mathcal{B}$ holds trivially.
If $n_2$ is a positive crossing, then we draw a part of the cusp diagram in ${\rm A}_{n_1}{\rm B}_{n_1}{\rm D}_{n_1}{\rm F}_{n_1}\cup
{\rm A}_{n_2}{\rm C}_{n_2}{\rm D}_{n_2}{\rm E}_{n_2}$ near ${\rm F}_{n_1}={\rm E}_{n_2}$,
and if $n_2$ is a negative crossing,
then we draw a part of the cusp diagram in ${\rm A}_{n_1}{\rm B}_{n_1}{\rm D}_{n_1}{\rm F}_{n_1}\cup
{\rm A}_{n_2}{\rm B}_{n_2}{\rm C}_{n_2}{\rm E}_{n_2}$ near ${\rm F}_{n_1}={\rm E}_{n_2}$
as in Figure \ref{pic13}.

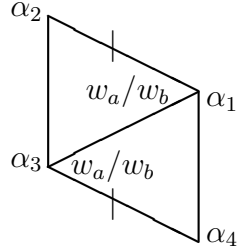
\begin{figure}[h]
\centering
\begin{picture}(3,3)
  {\thicklines
  \put(0,1){\line(2,1){2}}
  \put(0,1){\line(0,1){2}}
  \put(0,3){\line(2,-1){2}}
  \put(2,2){\line(0,-1){2}}
  \put(0,1){\line(2,-1){2}}}
  \put(2.1,1.8){$\alpha_1$}
  \put(-0.5,3){$\alpha_2$}
  \put(-0.5,1){$\alpha_3$}
  \put(2.1,0){$\alpha_4$}
  \put(0.5,1.9){$w_a/w_b$}
  \put(0.3,0.9){$w_a/w_b$}
  \put(0.8,2.5){$|$}
  \put(0.8,0.4){$|$}
%   \put(-0.2,2){|}
%   \put(1.8,1){|}
\end{picture}\caption{Part of the cusp diagram of Figure \ref{pic11}(c)}\label{pic13}
\end{figure}

Note that if $n_2$ is a positive crossing, then
$\alpha_1, \ldots,\alpha_4$ are points in the edges
${\rm A}_{n_1}{\rm F}_{n_1}={\rm A}_{n_2}{\rm E}_{n_2}$, ${\rm B}_{n_1}{\rm F}_{n_1}$,
${\rm D}_{n_1}{\rm F}_{n_1}={\rm D}_{n_2}{\rm E}_{n_2}$, ${\rm C}_{n_2}{\rm E}_{n_2}$, respectively,
and if $n_2$ is a negative crossing, then
$\alpha_1, \ldots,\alpha_4$ are points in the edges
${\rm A}_{n_1}{\rm F}_{n_1}={\rm C}_{n_2}{\rm E}_{n_2}$, ${\rm B}_{n_1}{\rm F}_{n_1}$,
${\rm D}_{n_1}{\rm F}_{n_1}={\rm B}_{n_2}{\rm E}_{n_2}$, ${\rm A}_{n_2}{\rm E}_{n_2}$, respectively.
Furthermore, the edge $\alpha_2\alpha_1$ is identified to $\alpha_3\alpha_4$, so the diagram in Figure \ref{pic13}
becomes an annulus. The product of shape parameters around $\alpha_1=\alpha_4$ in the annulus is
$\displaystyle\frac{w_a}{w_b}\left(\frac{w_a}{w_b}\right)'\left(\frac{w_a}{w_b}\right)''=-1$,
and the one around $\alpha_2=\alpha_3$
is also $-1$. Therefore, if we consider the previous annulus on the right of Figure \ref{pic13}, which shares the edge $\alpha_1\alpha_4$,
then we obtain the edge relation of ${\rm A}_{n_1}{\rm F}_{n_1}$ trivially.

We remark that the previous annulus always exists because, when we follow the horizontal line in Figure \ref{pic11}(c) backwards, 
after meeting the under-crossing point $n_2$, we let the next over-crossing point $n_3$ (see Figure \ref{ann}).
(If $n_3$ does not exist, then ${\rm A}_{n_1}{\rm F}_{n_1}\in\mathcal{A}$ but this violates our assumption.)
Then a part of the cusp diagram between $n_2$ and $n_3$ also forms an annulus, and this is the previous annulus.\footnote{
As we have seen in the case of Figure \ref{pic11}(a), the crossing points between $n_2$ and $n_3$ 
do not have any effect on the part of the cusp diagram because the triangles in Figure \ref{pic12} are cancelled by each other.
Also, as explained below, the existence of the previous annulus still holds even 
if some regions between $n_2$ and $n_3$ are unbounded.}

\begin{figure}[h]
\centering
\begin{picture}(9,2)\thicklines
   \put(4.2,1){\line(1,0){4}}
   \put(1,2){\line(0,-1){0.8}}
   \put(3.8,1){\vector(-1,0){3.8}}
   \put(4,2){\line(0,-1){2}}
   \put(1,2){\line(0,-1){0.8}}
   \put(1,0){\line(0,1){0.8}}
   \put(7.5,2){\line(0,-1){0.8}}
   \put(7.5,0){\line(0,1){0.8}}
   \put(1.2,0.6){$n_1$}
   \put(4.1,0.6){$n_2$}
   \put(7.7,0.6){$n_3$}
   \put(2.3,1.4){$w_b$}
   \put(2.3,0.2){$w_a$}
   \put(5.5,1.4){$\cdots$}
   \put(5.5,0.2){$\cdots$}
  \end{picture}
  \caption{Previous annulus}\label{ann}
\end{figure}
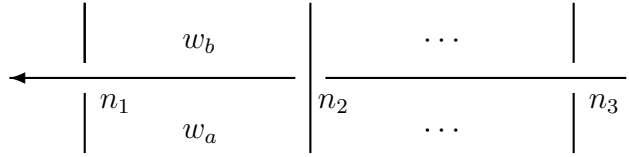
 
The cases of Figure \ref{pic11}(b) and Figure \ref{pic11}(d) are the same as
the cases of Figure \ref{pic11}(a) and Figure \ref{pic11}(c), respectively.
Therefore, we find all the edges in $\mathcal{B}$ satisfy the edge relations trivially
by the method of parametrizing edges.

Now we assume one of the regions parametrized by $w_a$ or $w_b$ in Figure \ref{pic11} is an unbounded region.
Then the cusp diagram in Figure \ref{pic12} collapses to an edge $\alpha_2\alpha_3=\alpha_5\alpha_4$ and
the one in Figure \ref{pic13} collapses to an edge $\alpha_2\alpha_3=\alpha_1\alpha_4$.
Therefore, our arguments for $\mathcal{B}$ still hold for the collapsed case.\footnote{
What we need is to consider the next annuli on the left and the right side, and do the same arguments.}

\end{proof}

\begin{proof}[Proof of Proposition \ref{prop11}]

Consider the function $P_1(w_j,w_k,w_l,w_m)$, which previously appeared in Section \ref{ch32}.
By direct calculation, we obtain
\begin{eqnarray}
\exp\left(w_j\frac{\partial P_1}{\partial w_j}\right)&=&\left(\frac{w_jw_l}{w_kw_m}\right)'
    \left(\frac{w_m}{w_j}\right)''\left(\frac{w_k}{w_j}\right)''\label{eq5},\\
\exp\left(w_k\frac{\partial P_1}{\partial w_k}\right)&=&\left(\frac{w_jw_l}{w_kw_m}\right)''
    \left(\frac{w_k}{w_l}\right)'\left(\frac{w_k}{w_j}\right)'\label{eq6},\\
\exp\left(w_l\frac{\partial P_1}{\partial w_l}\right)&=&\left(\frac{w_jw_l}{w_kw_m}\right)'
    \left(\frac{w_m}{w_l}\right)''\left(\frac{w_k}{w_l}\right)''\label{eq7},\\
\exp\left(w_m\frac{\partial P_1}{\partial w_m}\right)&=&\left(\frac{w_jw_l}{w_kw_m}\right)''
    \left(\frac{w_m}{w_l}\right)'\left(\frac{w_m}{w_j}\right)'\label{eq8}.
\end{eqnarray}
Note that (\ref{eq5}), (\ref{eq6}), (\ref{eq7}) and (\ref{eq8}) are the products of shape parameters
assigned to the edges ${\rm C}_n{\rm D}_n$, ${\rm D}_n{\rm A}_n$, 
${\rm A}_n{\rm B}_n$ and ${\rm B}_n{\rm C}_n$ of Figure \ref{pic9}(a), respectively.\footnote{
For example, consider equation (\ref{eq5}) and Figure \ref{pic9}(a). 
The shape parameters assigned to the edge ${\rm C}_n{\rm D}_n$
are  $\left(\frac{w_jw_l}{w_kw_m}\right)'$, $\left(\frac{w_m}{w_j}\right)''$ and $\left(\frac{w_k}{w_j}\right)''$,
which come from the tetrahedra ${\rm C}_n{\rm D}_n{\rm A}_n{\rm B}_n$, ${\rm C}_n{\rm D}_n{\rm B}_n{\rm F}_n$ and
${\rm C}_n{\rm D}_n{\rm A}_n{\rm E}_n$, respectively.}
Also, after evaluating $w_l=0$ to $P_1$, we obtain
\begin{eqnarray}
\exp\left(w_j\frac{\partial P_1(w_j,w_k,0,w_m)}{\partial w_j}\right)&=&
    \left(\frac{w_m}{w_j}\right)''\left(\frac{w_k}{w_j}\right)'',\label{eq9}\\
\exp\left(w_k\frac{\partial P_1(w_j,w_k,0,w_m)}{\partial w_k}\right)&=&
    \frac{w_m}{w_j}\left(\frac{w_k}{w_j}\right)',\label{eq10}\\
\exp\left(w_m\frac{\partial P_1(w_j,w_k,0,w_m)}{\partial w_m}\right)&=&
    \left(\frac{w_m}{w_j}\right)'\frac{w_k}{w_j}.\label{eq11}
\end{eqnarray}
Note that (\ref{eq9}), (\ref{eq10}) and (\ref{eq11}) are the products of shape parameters assigned to
the edges ${\rm C}_n{\rm D}_n$, ${\rm D}_n{\rm A}_n$ and ${\rm B}_n{\rm C}_n$ of Figure \ref{pic9}(a),
respectively, after collapsing the edge ${\rm A}_n{\rm B}_n$.
Direct calculation shows the same relations hold for $P_2$, $P_3$, $P_4$, $N_1$, $N_2$, $N_3$ and $N_4$.

Consider the first potential function for the end point of $I$ in Section \ref{ch32}.
Direct calculation shows
\begin{eqnarray}
\exp\left(w_l\frac{\partial P_1(w_j,w_j,w_l,w_m)}{\partial w_l}\right)&=&
   \exp\left(w_l\frac{\partial P_1(w_j,w_j,w_l,0)}{\partial w_l}\right)=
   \left(\frac{w_j}{w_l}\right)''\label{eq12},\\
\exp\left(w_m\frac{\partial P_1(w_j,w_j,w_l,w_m)}{\partial w_m}\right)&=&
   \exp\left(w_m\frac{\partial P_1(w_j,w_j,0,w_m)}{\partial w_m}\right)=
\left(\frac{w_m}{w_j}\right)'\label{eq13},\\
\exp\left(w_j\frac{\partial P_1(w_j,w_j,w_l,w_m)}{\partial w_j}\right)&=&\left(\frac{w_j}{w_m}\right)''
    \left(\frac{w_l}{w_j}\right)'=\left(\frac{w_m}{w_j}\right)''\left(\frac{w_j}{w_l}\right)'
    \frac{w_m}{w_l}\label{eq14},\\
\exp\left(w_j\frac{\partial P_1(w_j,w_j,0,w_m)}{\partial w_j}\right)&=&\left(\frac{w_j}{w_m}\right)''
=\left(\frac{w_m}{w_j}\right)''
    \frac{w_m}{w_j}\,(-1)\label{eq15},\\
\exp\left(w_j\frac{\partial P_1(w_j,w_j,w_l,0)}{\partial w_j}\right)&=&\left(\frac{w_l}{w_j}\right)'
    =\left(\frac{w_j}{w_l}\right)'
    \frac{w_j}{w_l}\,(-1)\label{eq16},
\end{eqnarray}
where (\ref{eq12}) and (\ref{eq13}) are the products of shape parameters assigned to
the edges ${\rm A}_n{\rm B}_n$ and ${\rm B}_n{\rm C}_n$ of Figure \ref{pic9}(a), respectively, after collapsing
the edge ${\rm D}_n{\rm E}_n$ without or with the collapsing of a horizontal edge.

To explain that (\ref{eq14}), (\ref{eq15}) and (\ref{eq16}) are still parts of edge relations,
we need different arguments. First, consider Figure \ref{pic14}.

\begin{figure}[h]
\centering
  \subfigure[From Figure \ref{pic12}]
  {\begin{picture}(4,2)
  {\thicklines
  \put(0,0){\line(2,1){4}}
  \put(0,0){\line(0,1){2}}
  \put(0,2){\line(2,-1){4}}
  \put(4,0){\line(0,1){2}}}
  \put(0,0){\circle*{0.2}}
  \put(0,2){\circle*{0.2}}
  \put(4,0){\circle*{0.2}}
  \put(4,2){\circle*{0.2}}
  \put(0.5,0.9){$w_a/w_b$}
  \put(2.3,0.9){$w_b/w_a$}
  \put(0.7,1.5){$|$}
  \put(3.3,1.5){$|$}
  \put(0.7,0.3){$||$}
  \put(3.2,0.3){$||$}
  \put(-0.2,1){$\equiv$}
  \put(3.8,1){$\equiv$}
\end{picture}}\hspace{2cm}
  \subfigure[From Figure \ref{pic13}]
  {\begin{picture}(3,3)
  {\thicklines
  \put(0,1){\line(2,1){2}}
  \put(0,1){\line(0,1){2}}
  \put(0,3){\line(2,-1){2}}
  \put(2,2){\line(0,-1){2}}
  \put(0,1){\line(2,-1){2}}}
  \put(0,1){\circle*{0.2}}
  \put(0,3){\circle*{0.2}}
  \put(0.5,1.9){$w_a/w_b$}
  \put(0.3,0.9){$w_a/w_b$}
  \put(0.8,2.5){$|$}
  \put(0.8,0.4){$|$}
%   \put(-0.2,2){|}
%   \put(1.8,1){|}
\end{picture}}
  \caption{Parts of the cusp diagrams from Figure \ref{pic12} and Figure \ref{pic13}}\label{pic14}
\end{figure}
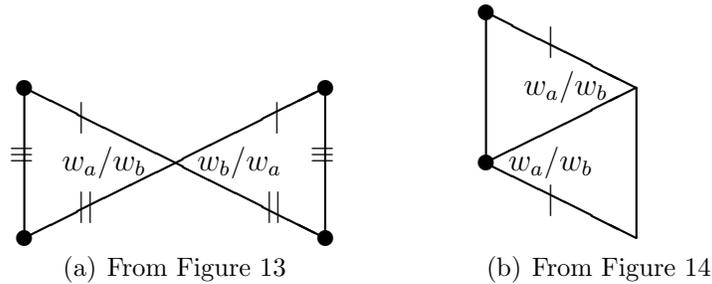

In Figure \ref{pic14}(a), the product of all shape parameters assigned to the edge expressed by dots is
\begin{equation}\label{eq17}
\left(\frac{w_a}{w_b}\right)'\left(\frac{w_a}{w_b}\right)''\left(\frac{w_b}{w_a}\right)'\left(\frac{w_b}{w_a}\right)''=1,
\end{equation}
and in Figure \ref{pic14}(b), the product is
\begin{equation}\label{eq18}
\left(\frac{w_a}{w_b}\right)'\left(\frac{w_a}{w_b}\right)''\frac{w_a}{w_b}=-1.
\end{equation}

To see the meaning of (\ref{eq14}), consider the following two cases in Figure \ref{pic15}, where $n_1$ is the end point of $I$
and $n_2$ is the previous over-crossing point. Figure \ref{pic15}(a) means 
the case when there is no crossing point between $n_1$ and $n_2$, and Figure \ref{pic15}(b) means the other case.

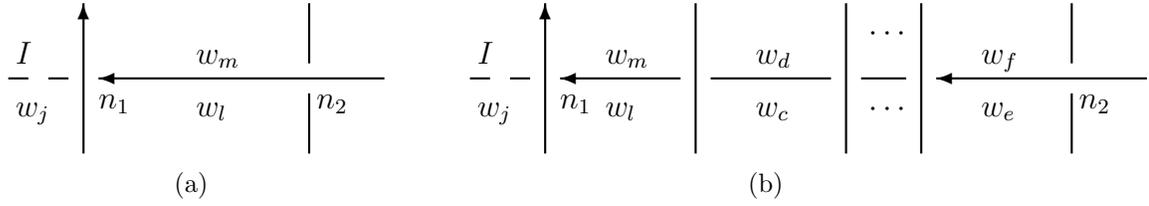
\begin{figure}[h]
\centering
\subfigure[]
{\begin{picture}(5,2)\thicklines
   \put(5,1){\vector(-1,0){3.8}}
   \put(1,0){\vector(0,1){2}}
   \put(4,0){\line(0,1){0.8}}
   \put(4,2){\line(0,-1){0.8}}
   \put(0,1){\dashline{0.5}(0,0)(0.8,0)}
   \put(1.2,0.6){$n_1$}
   \put(4.1,0.6){$n_2$}
   \put(2.5,0.5){$w_l$}
   \put(2.5,1.2){$w_m$}
   \put(0.1,1.2){$I$}
   \put(0.1,0.5){$w_j$}
\end{picture}}\hspace{1cm}
\subfigure[]
{\begin{picture}(8,2)\thicklines
   \put(9,1){\vector(-1,0){2.8}}
   \put(1,0){\vector(0,1){2}}
   \put(3,2){\line(0,-1){2}}
   \put(5,2){\line(0,-1){2}}
   \put(6,2){\line(0,-1){2}}
   \put(8,0){\line(0,1){0.8}}
   \put(8,2){\line(0,-1){0.8}}
   \put(2.8,1){\vector(-1,0){1.6}}
   \put(3.2,1){\line(1,0){1.6}}
   \put(5.2,1){\line(1,0){0.6}}
   \put(0,1){\dashline{0.5}(0,0)(0.8,0)}
   \put(1.2,0.6){$n_1$}
%   \put(3.2,0.6){$\tilde{n}$}
   \put(8.1,0.6){$n_2$}
   \put(1.8,0.5){$w_l$}
   \put(1.8,1.2){$w_m$}
   \put(3.8,0.5){$w_c$}
   \put(3.8,1.2){$w_d$}
   \put(6.8,0.5){$w_e$}
   \put(6.8,1.2){$w_f$}
   \put(5.3,1.5){$\cdots$}
   \put(5.3,0.5){$\cdots$}
   \put(0.1,1.2){$I$}
   \put(0.1,0.5){$w_j$}
\end{picture}}\caption{Two cases after the end point of $I$}\label{pic15}
\end{figure}

Because $n_1$ is the endpoint of $I$, the edge ${\rm D}_{n_1}{\rm E}_{n_1}$ 
of the octahedron on $n_1$ in Figure \ref{pic9}(a) is collapsed to a point ${\rm D}_{n_1}={\rm E}_{n_1}$
and becomes two tetrahedra as in Figure \ref{pic20} (if one more horizontal edge is collapsed here,
the result becomes one tetrahedron. This is the cases of equations (\ref{eq15}) and (\ref{eq16})).

\begin{figure}[h]
\centering
\includegraphics[scale=0.6]{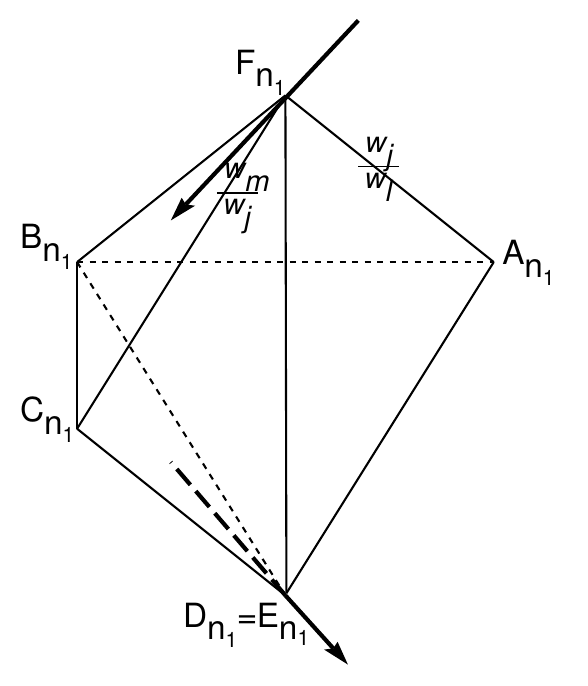}
\caption{Figure \ref{pic9}(a) after collapsing the edge ${\rm D}_{n_1}{\rm E}_{n_1}$}\label{pic20}
\end{figure}

The part of the cusp diagrams for each case are in Figure \ref{pic16}
(see Figure \ref{label} and Figure \ref{pic9} for the assigning rule of the shape parameters).

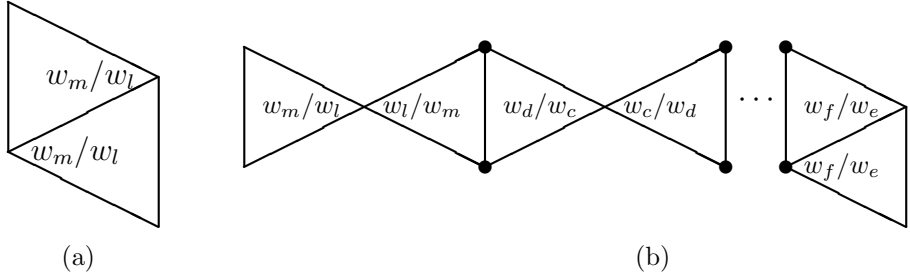
\begin{figure}[h]
\centering
\subfigure[]
{\begin{picture}(2,3)
  {\thicklines
  \put(0,1){\line(2,1){2}}
  \put(0,1){\line(0,1){2}}
  \put(0,3){\line(2,-1){2}}
  \put(2,2){\line(0,-1){2}}
  \put(0,1){\line(2,-1){2}}}
  \put(0.5,1.9){$w_m/w_l$}
  \put(0.3,0.9){$w_m/w_l$}
\end{picture}}\hspace{1cm}
\subfigure[]
  {\begin{picture}(11,3)
  {\setlength{\unitlength}{0.8cm}\thicklines
  \put(0,1){\line(2,1){4}}
  \put(0,1){\line(0,1){2}}
  \put(0,3){\line(2,-1){4}}
  \put(4,1){\line(0,1){2}}
  \put(4,1){\line(2,1){4}}
  \put(4,3){\line(2,-1){4}}
  \put(8,1){\line(0,1){2}}
  \put(4,1){\circle*{0.2}}
  \put(4,3){\circle*{0.2}}
  \put(8,1){\circle*{0.2}}
  \put(8,3){\circle*{0.2}}
  \put(9,1){\circle*{0.2}}
  \put(9,3){\circle*{0.2}}
  \put(8.2,2){$\cdots$}
  \put(0.3,1.9){\footnotesize$ w_m/w_l$}
  \put(2.3,1.9){\footnotesize$w_l/w_m$}
  \put(4.3,1.9){\footnotesize$w_d/w_c$}
  \put(6.3,1.9){\footnotesize$w_c/w_d$}
  \put(9.3,1.9){\footnotesize$w_f/w_e$}
  \put(9.3,0.9){\footnotesize$w_f/w_e$}
  \put(9,1){\line(2,1){2}}
  \put(9,1){\line(0,1){2}}
  \put(9,3){\line(2,-1){2}}
  \put(11,2){\line(0,-1){2}}
  \put(9,1){\line(2,-1){2}}}
\end{picture}}\caption{The parts of the cusp diagram corresponding to Figure \ref{pic15}}\label{pic16}
\end{figure}

In the case of Figure \ref{pic15}(a), the product of shape parameters assigned to the edges
${\rm C}_{n_1}{\rm D}_{n_1}={\rm D}_{n_1}{\rm A}_{n_1}$ of Figure \ref{pic20}
is $\left(\frac{w_m}{w_j}\right)''\left(\frac{w_j}{w_l}\right)'$. These edges are identified to
${\rm C}_{n_2}{\rm F}_{n_2}$, and $\frac{w_l}{w_m}$ is assigned to this edge.
This explains that (\ref{eq14}) is the product of shape parameters assigned to the edges
${\rm C}_{n_1}{\rm D}_{n_1}={\rm D}_{n_1}{\rm A}_{n_1}={\rm C}_{n_2}{\rm F}_{n_2}$.

In the case of Figure \ref{pic15}(b), the product of shape parameters assigned to the edges
${\rm C}_{n_1}{\rm D}_{n_1}={\rm D}_{n_1}{\rm A}_{n_1}$ of Figure \ref{pic20}
is $\left(\frac{w_m}{w_j}\right)''\left(\frac{w_j}{w_l}\right)'$. In Figure \ref{pic16}(b),
these edges are identified to the edges drawn by the dots,
and the product of shape parameters assigned to the edges is
$$\left(\frac{w_l}{w_m}\right)'\left(\frac{w_l}{w_m}\right)''\times 1\times\cdots\times (-1)
=\frac{w_m}{w_l}$$
by (\ref{eq17}) and (\ref{eq18}). This also explains (\ref{eq14})
is the product of shape parameters assigned to
${\rm C}_{n_1}{\rm D}_{n_1}={\rm D}_{n_1}{\rm A}_{n_1}$ and some other edges identified to this.
This fact is still true\footnote{
Even if the endpoint of $J$ lies between the crossings $n_1$ and $n_2$,
this fact is still true because the collapsing of the non-horizontal edges does not change the part of the cusp diagram
we are considering.}
even if some of the regions assigned by $w_c, w_d, \ldots,w_e, w_f$ are unbounded regions
because the collapsing of the horizontal edges makes the cusp diagrams of Figure \ref{pic12} and Figure \ref{pic13}
into edges. If the cusp diagram of Figure \ref{pic12} becomes an edge, then ignoring the diagram is enough for our consideration,
and if that of Figure \ref{pic13} becomes an edge, then considering the previous annulus is enough.
The previous annulus always exists because,
by the same argument as in the proof of Lemma \ref{lem41},
if we choose the next over-crossing point $n_3$ by following the horizontal lines backwards,
the cusp diagram between $n_2$ and $n_3$ becomes the previous annulus.\footnote{
There is a concern that the previous annulus is collapsed to an edge,
and all the previous annuli, following the horizontal line, are collapsed to edges. 
However, this cannot happen because Thurston triangulation is a triangulation of the hyperbolic knot complement $S^3-K$
and we assumed the existence of the geometric solution.}

Now we describe the meaning of (\ref{eq15}).
Let $n_1$ be the end point of $I$, $n_2$ be the previous over-crossing point and
$n_3$ be the previous under-crossing point. Also let $\tilde{n}$ be the previous point of $n_1$.
Assume the edges ${\rm D}_{n_1}{\rm E}_{n_1}$ and ${\rm A}_{n_1}{\rm B}_{n_1}$ of Figure \ref{pic9}(a) are collapsed.
Then ${\rm C}_{n_1}{\rm D}_{n_1}={\rm B}_{n_1}{\rm D}_{n_1}$,
and $\left(\frac{w_m}{w_j}\right)''\frac{w_m}{w_j}$ is assigned to this edge.
If $\tilde{n}=n_2$, then the edges identified to ${\rm C}_{n_1}{\rm D}_{n_1}={\rm B}_{n_1}{\rm D}_{n_1}$ appear
between the points $\tilde{n}=n_2$ and $n_3$ as the dots in Figure \ref{pic14},
and if $\tilde{n}\neq n_2$, then the edges appear between $\tilde{n}$ and $n_2$ in the same way.
Particularly, Figure \ref{pic14}(a) may appear many times, but Figure \ref{pic14}(b) appears only one time
at the points $n_3$ or $n_2$, respectively.
By (\ref{eq17}) and (\ref{eq18}), the product of all shape parameters assigned to the dots is $-1$, so
(\ref{eq15}) is the product of shape parameters assigned to the edges
${\rm C}_{n_1}{\rm D}_{n_1}={\rm B}_{n_1}{\rm D}_{n_1}$ and some others identified to these.
This fact is still true when some of the horizontal edges or non-horizontal edges of the octahedra are collapsed
because of the same reason explained above for the case of (\ref{eq14}).

The same relations hold for (\ref{eq16}) and the cases of other potential functions of the endpoints of $I$ and $J$ by the same arguments.

Therefore, we conclude that $\mathcal{H}_2$ becomes all the edge relations of $\mathcal{A}$ except
the one horizontal edge whose region is assigned as 0 instead of the variables $w_1,\ldots,w_m$.
For an ideal tetrahedron parametrized with $u\in\mathbb{C}$ as in Figure \ref{pic10},
the product of all shape parameters assigned to all edges in the tetrahedron is $(u u' u'')^2=1$.
This implies the product of all edge relations becomes 1.
On the other hand, from Lemma \ref{lem41} and the above arguments, we found all but one edge relation
by $\mathcal{H}_2$.
Therefore, the remaining edge relation holds automatically.

Finally, we prove $\mathcal{H}_2$ contains the cusp condition.
Note that edges $\alpha_1\alpha_4$ and $\alpha_2\alpha_3$ in Figure \ref{pic13} are meridians of the cusp diagram.
The same shape parameter $\frac{w_a}{w_b}$ is assigned to the corners
$\angle\alpha_2\alpha_1\alpha_3$ and $\angle\alpha_1\alpha_3\alpha_4$, so
one of the cusp conditions is trivially satisfied by the method of assigning shape parameters to edges.
If we have all the edge relations and one cusp condition of a meridian,
then we can obtain all remaining cusp conditions using these relations.
Therefore, we conclude $\mathcal{H}_2$ are the hyperbolicity equations
of Thurston triangulation of $S^3-K$.

\end{proof}

We remark one technical fact. For Thurston triangulation,
let the shape parameters of the ideal tetrahedra be $s_1,\ldots,s_h$. 
These parameters are defined by the ratios of a solution $w_1,\ldots,w_m$ of $\mathcal{H}_2$, so
if the values of $w_1,\ldots,w_m$ are fixed, then the values of $s_1,\ldots,s_h$ are uniquely determined
and satisfy the hyperbolicity equation.
Likewise, if the values of $s_1,\ldots,s_h$ satisfying the hyperbolicity equations are fixed,
then we can uniquely determine the solution of $w_1,\ldots,w_m$ of $\mathcal{H}_2$ as follows:
First, we can determine some of the values of $w_1,\ldots,w_m$, which are assigned to the regions adjacent to
the region assigned with the number 0. 
Once a value $w_l$ of a region is determined,
then all the values of the adjacent regions can be determined.
Therefore, all $w_1,\ldots,w_m$ can be determined. Furthermore, those values are well-defined
and become a solution of $\mathcal{H}_2$ because of the hyperbolicity equations.

In the next section, we will show the shape parameters of Yokota triangulation determines that of Thurston triangulation,
and with certain restriction, vice versa.
By the above discussion, this correspondence means
each essential solution of $\mathcal{H}_1$ determines a unique solution of $\mathcal{H}_2$. 
Furthermore, if all the determined solutions of $\mathcal{H}_2$ are essential, then each essential solution of $\mathcal{H}_2$
determines a unique essential solution of $\mathcal{H}_1$.

\section{Proof of Theorem \ref{thm}}\label{ch5}

We start this section with the proof of Lemma \ref{lem12}.

\begin{proof}[Proof of Lemma \ref{lem12}]
For a hyperbolic ideal octahedron in Figure \ref{pic17},
we assign shape parameters $t_1$, $t_2$, $t_3$, $t_4$, $u_1$, $u_2$, $u_3$ and $u_4$
to the edges CD, DA, AB, BC, CF, DE, AF and BE, respectively. Let $u_5:=\frac{1}{u_1u_3}=\frac{1}{u_2u_4}$,
which is also a shape parameter assigned to the edges AC and BD of the tetrahedron ABCD.

\begin{figure}[h]
\centering
\includegraphics[scale=0.7]{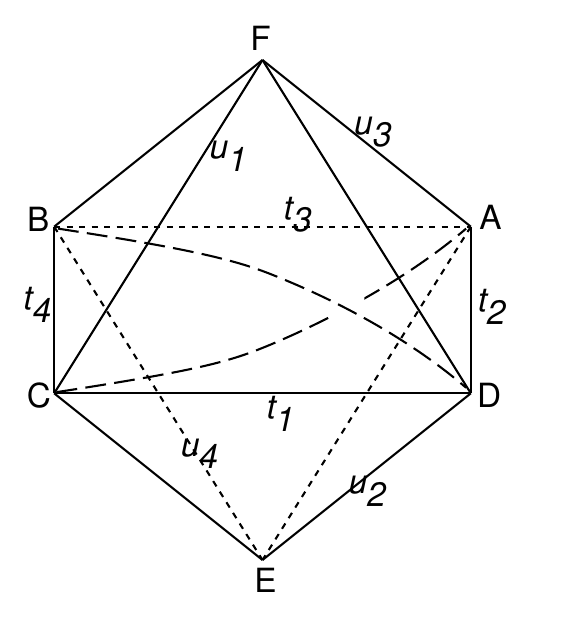}
  \caption{Assignment of shape parameters}\label{pic17}
\end{figure}
Then we obtain the following relations.
\begin{eqnarray}\label{eq20}
      \left\{
      \begin{array}{ll}
     u_1=t_1' t_4'',\\
     u_2=t_1' t_2'',\\
     u_3=t_3' t_2'',\\
     u_4=t_3' t_4'',\\
     u_5=\left(t_1't_2''t_3't_4''\right)^{-1},
      \end{array}\right.
      \left\{
      \begin{array}{ll}
     t_1=u_1'' u_2'' u_5',\\
     t_2=u_2' u_3' u_5'',\\
     t_3=u_3'' u_4'' u_5',\\
     t_4=u_4' u_1' u_5'',\\
     t_1t_2t_3t_4=1.
      \end{array}\right.
\end{eqnarray}

Note that $t_1,\ldots,t_4$ and $u_1,\ldots,u_5$ are the shape parameters of the tetrahedra 
in Yokota triangulation and in Thurston triangulation, respectively.
According to Observation \ref{obs}, we know these two triangulations are related by 3-2 moves and 4-5 moves
on collapsed octahedra and non-collapsed octahedra, respectively. 
Equation (\ref{eq20}) shows the correspondence between
the shape parameters under 4-5 moves, so if $t_1,\ldots,t_4\notin\{0,1,\infty\}$, then we can determine the values of $u_1,\ldots,u_5$
from the left side of (\ref{eq20}). Also the equation corresponding to 3-2 move can be obtained easily 
(see (\ref{eq30}) for example).
This implies that the shape parameters of Yokota triangulation determine that of Thurston triangulation. Furthermore,
if all $u_1,\ldots,u_5\notin\{0,1,\infty\}$, then the shape parameters of Thurston triangulation
recover that of Yokota triangulation by the right side of (\ref{eq20}). This completes the proof.

\end{proof}

Our goal of this section is to prove
\begin{equation*}
V_0(z_1,\ldots,z_g)\equiv W_0(w_1,\ldots,w_m)\modulos,
\end{equation*}
for any essential solution $(z_1,\ldots,z_g)$ of $\mathcal{H}_1$ and 
the corresponding essential solution $(w_1,\ldots,w_m)$ of $\mathcal{H}_2$.
To prove this, we introduce the dilogarithm identities of an ideal octahedron in Lemma \ref{lem5}.
Note that the functions $\li(z)$ and $\log z$ are multi-valued functions. Therefore, to obtain well-defined values,
we have to select a proper branch of the logarithm by choosing $\arg z$ and $\arg(1-z)$.

Let $D(z):=\imaginary\li(z)+\log\vert z\vert\arg(1-z)$ be the Bloch-Wigner function for $z\in\mathbb{C}-\{0,1\}$.
It is a well-known fact $D(z)$ is invariant under any choice of log-branch and that $D(z)=-D(\frac{1}{z})=\vol(T_z)$,
where $T_z$ is the hyperbolic ideal tetrahedron with the shape parameter $z$.
Therefore, from Figure \ref{pic17}, we obtain
\begin{equation}\label{eq21}
D(t_1)+D(t_2)+D(t_3)+D(t_4)=D(u_1)+D(u_2)+D(u_3)+D(u_4)+D(u_5).
\end{equation}

\begin{lem}\label{lem5}
Let $t_1, t_2, t_3, t_4, u_1, u_2, u_3, u_4, u_5\notin\{0,1,\infty\}$ be the shape parameters defined in the hyperbolic octahedron
in Figure \ref{pic17} satisfying (\ref{eq20}) and (\ref{eq21}).
Then the following identities hold for any choice of log-branch.

\begin{eqnarray}
\lefteqn{\li(t_1)-\li(\frac{1}{t_2})+\li(t_3)-\li(\frac{1}{t_4})}\nonumber\\
\lefteqn{~\equiv\li(u_1)+\li(u_2)-\li(\frac{1}{u_3})-\li(\frac{1}{u_4})+\li(u_5)-\frac{\pi^2}{6}+\log u_1\log u_2}\label{eq22}\\
  &&-\left(-\log(1-t_1)+\log(1-\frac{1}{t_4})\right)\log u_2-\left(-\log(1-t_1)+\log(1-\frac{1}{t_2})\right)\log u_1\nonumber\\
  &&+\left(-\log(1-t_1)+\log(1-\frac{1}{t_4})\right)\log(1-{u_1})
  +\left(-\log(1-t_1)+\log(1-\frac{1}{t_2})\right)\log(1-u_2)\nonumber\\
  &&+\left(-\log(1-t_3)+\log(1-\frac{1}{t_2})\right)\log(1-\frac{1}{u_3})
  +\left(-\log(1-t_3)+\log(1-\frac{1}{t_4})\right)\log(1-\frac{1}{u_4})\nonumber\\
  &&+\left(\log(1-t_1)-\log(1-\frac{1}{t_2})+\log(1-t_3)-\log(1-\frac{1}{t_4})\right)\log(1-{u_5})~~~\modulos\nonumber
\end{eqnarray}\vspace{-0.3cm}
\begin{eqnarray}
\lefteqn{~\equiv\li(u_1)-\li(\frac{1}{u_2})-\li(\frac{1}{u_3})+\li({u_4})-\li(\frac{1}{u_5})+\frac{\pi^2}{6}-\log u_2\log u_3}\label{eq23}\\
  &&+\left(-\log(1-t_3)+\log(1-\frac{1}{t_2})\right)\log u_2+\left(-\log(1-t_1)+\log(1-\frac{1}{t_2})\right)\log u_3\nonumber\\
  &&+\left(-\log(1-t_1)+\log(1-\frac{1}{t_4})\right)\log(1-{u_1})
  +\left(-\log(1-t_1)+\log(1-\frac{1}{t_2})\right)\log(1-\frac{1}{u_2})\nonumber\\
  &&+\left(-\log(1-t_3)+\log(1-\frac{1}{t_2})\right)\log(1-\frac{1}{u_3})
  +\left(-\log(1-t_3)+\log(1-\frac{1}{t_4})\right)\log(1-{u_4})\nonumber\\
  &&+\left(\log(1-t_1)-\log(1-\frac{1}{t_2})+\log(1-t_3)-\log(1-\frac{1}{t_4})\right)\log(1-\frac{1}{u_5})~~~\modulos\nonumber
\end{eqnarray}\vspace{-0.3cm}
\begin{eqnarray}
\lefteqn{~\equiv-\li(\frac{1}{u_1})-\li(\frac{1}{u_2})+\li(u_3)+\li(u_4)+\li(u_5)-\frac{\pi^2}{6}+\log u_3\log u_4}\label{eq24}\\
  &&-\left(-\log(1-t_3)+\log(1-\frac{1}{t_4})\right)\log u_3-\left(-\log(1-t_3)+\log(1-\frac{1}{t_2})\right)\log u_4\nonumber\\
  &&+\left(-\log(1-t_1)+\log(1-\frac{1}{t_4})\right)\log(1-\frac{1}{u_1})
  +\left(-\log(1-t_1)+\log(1-\frac{1}{t_2})\right)\log(1-\frac{1}{u_2})\nonumber\\
  &&+\left(-\log(1-t_3)+\log(1-\frac{1}{t_2})\right)\log(1-u_3)
  +\left(-\log(1-t_3)+\log(1-\frac{1}{t_4})\right)\log(1-u_4)\nonumber\\
  &&+\left(\log(1-t_1)-\log(1-\frac{1}{t_2})+\log(1-t_3)-\log(1-\frac{1}{t_4})\right)\log(1-u_5)~~~\modulos\nonumber
\end{eqnarray}\vspace{-0.3cm}
\begin{eqnarray}
\lefteqn{~\equiv-\li(\frac{1}{u_1})+\li(u_2)+\li(u_3)-\li(\frac{1}{u_4})-\li(\frac{1}{u_5})+\frac{\pi^2}{6}-\log u_1\log u_4}\label{eq25}\\
  &&+\left(-\log(1-t_1)+\log(1-\frac{1}{t_4})\right)\log u_4+\left(-\log(1-t_3)+\log(1-\frac{1}{t_4})\right)\log u_1\nonumber\\
  &&+\left(-\log(1-t_1)+\log(1-\frac{1}{t_4})\right)\log(1-\frac{1}{u_1})
  +\left(-\log(1-t_1)+\log(1-\frac{1}{t_2})\right)\log(1-u_2)\nonumber\\
  &&+\left(-\log(1-t_3)+\log(1-\frac{1}{t_2})\right)\log(1-u_3)
  +\left(-\log(1-t_3)+\log(1-\frac{1}{t_4})\right)\log(1-\frac{1}{u_4})\nonumber\\
  &&+\left(\log(1-t_1)-\log(1-\frac{1}{t_2})+\log(1-t_3)-\log(1-\frac{1}{t_4})\right)\log(1-\frac{1}{u_5})~~~\modulos.\nonumber
\end{eqnarray}
Furthermore,
\begin{eqnarray}
\lefteqn{\li({t_1})-\li(\frac{1}{t_2})-\li(\frac{1}{t_4})+\frac{\pi^2}{6}
\equiv\li(u_1)+\li(u_2)-\frac{\pi^2}{6}+\log u_1\log u_2\label{eq26}}\\
&&+\left(-\log(1-t_1)+\log(1-\frac{1}{t_4})\right)\left(-\log u_2+\log(1-u_1)\right)\nonumber\\
&&+\left(-\log(1-t_1)+\log(1-\frac{1}{t_2})\right)\left(-\log u_1+\log(1-u_2)\right)~~~\modulos\nonumber
\end{eqnarray}
when AB is collapsed to a point,
\begin{eqnarray}
\lefteqn{\li({t_1})-\li(\frac{1}{t_2})+\li({t_3})-\frac{\pi^2}{6}
\equiv-\li(\frac{1}{u_2})-\li(\frac{1}{u_3})+\frac{\pi^2}{6}-\log u_2\log u_3\label{eq27}}\\
&&+\left(-\log(1-t_3)+\log(1-\frac{1}{t_2})\right)\left(\log u_2+\log(1-\frac{1}{u_3})\right)\nonumber\\
&&+\left(-\log(1-t_1)+\log(1-\frac{1}{t_2})\right)\left(\log u_3+\log(1-\frac{1}{u_2})\right)~~~\modulos\nonumber
\end{eqnarray}
when BC is collapsed to a point,
\begin{eqnarray}
\lefteqn{-\li(\frac{1}{t_2})+\li(t_3)-\li(\frac{1}{t_4})+\frac{\pi^2}{6}
\equiv\li(u_3)+\li(u_4)-\frac{\pi^2}{6}+\log u_3\log u_4\label{eq28}}\\
&&+\left(-\log(1-t_3)+\log(1-\frac{1}{t_4})\right)\left(-\log u_3+\log(1-u_4)\right)\nonumber\\
&&+\left(-\log(1-t_3)+\log(1-\frac{1}{t_2})\right)\left(-\log u_4+\log(1-u_3)\right)~~~\modulos\nonumber
\end{eqnarray}
when CD is collapsed to a point, and
\begin{eqnarray}
\lefteqn{\li({t_1})+\li(t_3)-\li(\frac{1}{t_4})-\frac{\pi^2}{6}
\equiv-\li(\frac{1}{u_1})-\li(\frac{1}{u_4})+\frac{\pi^2}{6}-\log u_1\log u_4\label{eq29}}\\
&&+\left(-\log(1-t_1)+\log(1-\frac{1}{t_4})\right)\left(\log u_4+\log(1-\frac{1}{u_1})\right)\nonumber\\
&&+\left(-\log(1-t_3)+\log(1-\frac{1}{t_4})\right)\left(\log u_1+\log(1-\frac{1}{u_4})\right)~~~\modulos\nonumber
\end{eqnarray}
when DA is collapsed to a point.

\end{lem}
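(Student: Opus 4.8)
The plan is to read each identity as asserting that the dilogarithm sum of the four tetrahedra in Yokota's subdivision of the octahedron (the left-hand side $\li(t_1)-\li(\frac{1}{t_2})+\li(t_3)-\li(\frac{1}{t_4})$) equals the sum of the five tetrahedra in Thurston's subdivision (written in the $u$-variables), up to the explicit products-of-logarithms displayed in the statement. At the combinatorial level this is the 4-5 move of Figure \ref{pic4} (a 3-2 move in the collapsed cases (\ref{eq26})--(\ref{eq29})), and the analytic input is Abel's five-term relation for $\li$ together with the inversion formula $\li(z)+\li(\frac{1}{z})=-\frac{\pi^2}{6}-\frac{1}{2}\log^2(-z)$ and the reflection formula $\li(z)+\li(1-z)=\frac{\pi^2}{6}-\log z\log(1-z)$ already used in Lemma \ref{lem1}. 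Concretely, I would first establish the core identity (\ref{eq22}), then deduce (\ref{eq23})--(\ref{eq25}) from it by inversion, and finally obtain the four collapsed identities as degenerations.

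For (\ref{eq22}) itself I would substitute the relations (\ref{eq20}) so that $u_1,\dots,u_5$ become functions of $t_1,\dots,t_4$ with $t_1t_2t_3t_4=1$, reducing the claim to a single functional equation in three independent variables, and prove it by differentiation. Using $d\,\li(z)=-\log(1-z)\,d\log z$ together with the logarithmic differentials of (\ref{eq20}) (recall $\log t'=-\log(1-t)$ and $\log t''=\log(t-1)-\log t$), and differentiating the explicit $\log$-products by the Leibniz rule, I expect the differential of the difference of the two sides to vanish identically. This shows the difference is locally constant; I would then pin the constant to $0$ by evaluating both sides at a convenient symmetric base point (e.g. the regular ideal octahedron), using the Bloch--Wigner identity (\ref{eq21}) as an independent consistency check on the volume part.

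The chain (\ref{eq22})$=$(\ref{eq23})$=$(\ref{eq24})$=$(\ref{eq25}) I would handle exactly as the chain $P_1\approx P_2\approx P_3\approx P_4$ in Lemma \ref{lem1}: each right-hand side is obtained from the previous one by trading some $\li(u_i)$ for $-\li(\frac{1}{u_i})$ and flipping the sign of the $\li(u_5)$-term and of the relevant $\log u_a\log u_b$ correction, so the passage is controlled entirely by the inversion formula, with the resulting $\frac{1}{2}\log^2(-u_i)$ pieces absorbed into the difference of the explicit correction terms. For the collapsed identities (\ref{eq26})--(\ref{eq29}) one horizontal edge degenerates, so one $t_i$ and its tetrahedron drop out while two of the $u$-tetrahedra merge; these should follow by letting the collapsing parameter tend to its degenerate value in the corresponding non-collapsed identity and simplifying the resulting $\li(1)=\frac{\pi^2}{6}$ contributions by reflection.

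I expect the genuine obstacle to be the branch bookkeeping. Because the lemma asserts exact equalities under the principal branch rather than equalities modulo multiples of $\pi^2$, the differentiation argument only yields equality up to a locally constant complex quantity, and I must verify that this constant is precisely $0$ and that no $2\pi i\log$ or $\pi^2$ jumps are introduced across the domain in which the $t_i$ and $u_i$ range. The delicate step is to check that the $\frac{1}{2}\log^2(-z)$ terms produced by each use of inversion cancel term by term against the explicit $\log(1-t_i)\log(1-u_j)$ and $\log u_a\log u_b$ corrections written in the statement; this is the one place where the computation must be carried out in full rather than invoked, and where the base point used to fix the additive constant has to be chosen compatibly with the principal branch.
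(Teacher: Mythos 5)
Your proposal is correct in outline, but it takes a genuinely different route from the paper's proof. The paper never differentiates: for each of (\ref{eq22})--(\ref{eq25}) it computes the imaginary part exactly in terms of the Bloch--Wigner function $D(z)=\imaginary\li(z)+\log|z|\arg(1-z)$ and checks, using the algebraic consequences of (\ref{eq20}) (e.g.\ $u_1(1-u_2)^{-1}(1-\frac{1}{u_3})^{-1}(1-u_5)=t_2$), that it reduces to the volume identity (\ref{eq21}); analyticity then forces the difference of the two sides to be a real constant, which is killed by evaluating at $t_1=\cdots=t_4=u_1=\cdots=u_4=i$, $u_5=-1$ --- exactly your regular ideal octahedron. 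The collapsed identities (\ref{eq26})--(\ref{eq29}) are proved the same way and independently, using (\ref{eq30}), additivity of $D$, and the evaluation point $t_1=t_2=t_4=\exp(\frac{2\pi i}{3})$, $u_1=u_2=\exp(\frac{\pi i}{3})$. Your differentiation argument is an equally legitimate way to get local constancy, and your base point coincides with the paper's; where the routes really diverge is that you derive (\ref{eq23})--(\ref{eq25}) from (\ref{eq22}) by inversion and (\ref{eq26})--(\ref{eq29}) by degeneration, whereas the paper proves all eight identities separately by the same two-step argument. That choice buys the paper two simplifications: it never needs the exact (not merely mod $4\pi^2$) branch bookkeeping of the inversion formula, which you correctly identify as the delicate step and which is precisely what forced the weaker relation $\approx$ in Lemma \ref{lem1} --- the imaginary-part computation is branch-insensitive because $D$ is single-valued on $\mathbb{C}-\{0,1\}$, so all branch-sensitive work is confined to one point evaluation --- and it avoids the limit analysis your degeneration step requires, where divergent quantities such as $\log(1-t_3)\rightarrow\infty$ must be paired against vanishing factors such as $\log(1-\frac{1}{u_3})$ and $\log(1-u_5)$. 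Conversely, your route is self-contained calculus and exhibits the 4-5 and 3-2 move structure directly. Both arguments share the same residual caveat, which you flag explicitly and the paper passes over: local constancy plus a single evaluation settles the constant only on the connected, branch-cut-free component of the parameter domain containing the base point.
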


\begin{proof}
For a function $F$ consisting of dilogarithms and logarithms with certain fixed log-branch, 
we denote by $F^*$ the same function with different log-branch corresponding to an analytic continuation of $F$.
%It is easy to verify that, for functions $F_1$ and $F_2$ with $F_1^*\equiv F_1\equiv0\modu$ and $F_2^*-F_2\equiv0\modu$, we have
%\begin{equation}\label{bra0}
%(F_1F_2)^*=F_1^*F_2^*\equiv F_1^*F_2\modulos. 
%\end{equation}
It is a well-known fact that
\begin{equation}\label{bra1}
\li^*(z)\equiv\li(z)+2a\pi i\log z\modulos
\end{equation}
for certain integer $a$. Let $A:=\li(z)-\left(z\frac{\partial \li(z)}{\partial z}\right)\log z$. Then using (\ref{bra1}), we have
\begin{eqnarray*}
A^*&=&\li^*(z)-\left(z\frac{\partial \li^*(z)}{\partial z}\right)\log^* z
\equiv\li(z)+2a\pi i\log z -\left(z\frac{\partial \li(z)}{\partial z}+2a\pi i\right)\log^*z\\
&\equiv &\li(z)-\left(z\frac{\partial \li(z)}{\partial z}\right)\log^* z\modulos
\end{eqnarray*}
and
\begin{equation}\label{bra2}
A^*-A\equiv-\left(z\frac{\partial \li(z)}{\partial z}\right)(\log^*z-\log z) \modulos.
\end{equation}
Similarly, for $B:=\li(1/z)-\left(z\frac{\partial \li(1/z)}{\partial z}\right)\log z$, we have
\begin{equation}\label{bra3}
B^*-B\equiv-\left(z\frac{\partial \li(1/z)}{\partial z}\right)(\log^*z-\log z) \modulos.
\end{equation}

Now, we consider (\ref{eq22}). Let 
\begin{eqnarray*}
X(t_1,\ldots,t_4)&:=&\li(t_1)-\li(\frac{1}{t_2})+\li(t_3)-\li(\frac{1}{t_4}),\\
X_0(t_1,\ldots,t_4)&:=&X-\sum_{k=1}^4 \left(t_k\frac{\partial X}{\partial t_k}\right)\log t_k,\\
Y(u_1,\ldots,u_5)&:=&\li(u_1)+\li(u_2)-\li(\frac{1}{u_3})-\li(\frac{1}{u_4})+\li(u_5)-\frac{\pi^2}{6}+\log u_1\log u_2,\\
Y_0(u_1,\ldots,u_5)&:=&Y-\sum_{l=1}^5 \left(u_l\frac{\partial Y}{\partial u_l}\right)\log u_l,
\end{eqnarray*}
and
$$Z:=\text{(right side of (\ref{eq22}))--(left side of (\ref{eq22}))}.$$
Then by using (\ref{bra2}), (\ref{bra3}) and 
\begin{eqnarray*}
\lefteqn{\log^*u_1\log^*u_2-\log u_1\log u_2}\\
&&=\log^*u_1(\log^* u_2-\log u_2+\log u_2)-(\log^*u_1-\log^*u_1+\log u_1)\log u_2\\
&&\equiv\log u_1(\log^*u_2-\log u_2)+\log u_2(\log^*u_1-\log u_1)\modulos,
\end{eqnarray*}
we obtain
\begin{eqnarray}
\lefteqn{(X^*_0-X_0)-(Y_0^*-Y_0)}\nonumber\\
&&\equiv-\sum_{k=1}^4 t_k\frac{\partial X}{\partial t_k}(\log^* t_k-\log t_k)
+\sum_{l=1}^5 u_l\frac{\partial Y}{\partial u_l}(\log^* u_l-\log u_l)\modulos\label{bra4}
\end{eqnarray}
and
\begin{equation}\label{bra5}
%t_k\frac{\partial X^*}{\partial t_k}-t_k\frac{\partial X}{\partial t_k}\equiv0\modu,~~~
u_l\frac{\partial Y^*}{\partial u_l}-u_l\frac{\partial Y}{\partial u_l}\equiv0\modu,
\end{equation}
for 
%$k=1,\ldots,4,$ and 
$l=1,\ldots,5$.

First, we will prove $Z$ is invariant modulo $4\pi^2$ for any choice of log-branch by showing
\begin{equation}\label{bra6}
(Z+X_0-Y_0)^*-(Z+X_0-Y_0)\equiv(X^*_0-X_0)-(Y_0^*-Y_0)\modulos.
\end{equation}
Note that
\begin{eqnarray}
\lefteqn{Z+X_0-Y_0=
  \left(-\log(1-t_1)+\log(1-\frac{1}{t_4})-\log u_1\right)(-u_1\frac{\partial Y}{\partial u_1})\label{bra7}}\\
  &&+\left(-\log(1-t_1)+\log(1-\frac{1}{t_2})-\log u_2\right)(-u_2\frac{\partial Y}{\partial u_2})\nonumber\\
  &&+\left(-\log(1-t_3)+\log(1-\frac{1}{t_2})-\log u_3\right)(-u_3\frac{\partial Y}{\partial u_3})\nonumber\\
  &&+\left(-\log(1-t_3)+\log(1-\frac{1}{t_4})-\log u_4\right)(-u_4\frac{\partial Y}{\partial u_4})\nonumber\\
  &&+\left(\log(1-t_1)-\log(1-\frac{1}{t_2})+\log(1-t_3)-\log(1-\frac{1}{t_4})-\log u_5\right)
  (-u_5\frac{\partial Y}{\partial u_5})\nonumber\\
  &&-\sum_{k=1}^4 t_k\frac{\partial X}{\partial t_k}\log t_k.\nonumber
\end{eqnarray}
From (\ref{eq20}), we know
\begin{eqnarray*}
\lefteqn{-\log(1-t_1)+\log(1-\frac{1}{t_4})-\log u_1
\equiv-\log(1-t_1)+\log(1-\frac{1}{t_2})-\log u_2}\\
&&\equiv-\log(1-t_3)+\log(1-\frac{1}{t_2})-\log u_3
\equiv-\log(1-t_3)+\log(1-\frac{1}{t_4})-\log u_4\\
&&\equiv\log(1-t_1)-\log(1-\frac{1}{t_2})+\log(1-t_3)-\log(1-\frac{1}{t_4})-\log u_5\equiv0\modu.
\end{eqnarray*}
Therefore, from (\ref{bra7}) and the above, we have
\begin{eqnarray}
\lefteqn{(Z+X_0-Y_0)^*\equiv
  \left(-\log^*(1-t_1)+\log^*(1-\frac{1}{t_4})-\log^* u_1\right)(-u_1\frac{\partial Y}{\partial u_1})\label{bra8}}\\
  &&+\left(-\log^*(1-t_1)+\log^*(1-\frac{1}{t_2})-\log^* u_2\right)(-u_2\frac{\partial Y}{\partial u_2})\nonumber\\
  &&+\left(-\log^*(1-t_3)+\log^*(1-\frac{1}{t_2})-\log^* u_3\right)(-u_3\frac{\partial Y}{\partial u_3})\nonumber\\
  &&+\left(-\log^*(1-t_3)+\log^*(1-\frac{1}{t_4})-\log^* u_4\right)(-u_4\frac{\partial Y}{\partial u_4})\nonumber\\
  &&+\left(\log^*(1-t_1)-\log^*(1-\frac{1}{t_2})+\log^*(1-t_3)-\log^*(1-\frac{1}{t_4})-\log^* u_5\right)
  (-u_5\frac{\partial Y}{\partial u_5})\nonumber\\
  &&-\sum_{k=1}^4 t_k\frac{\partial X^*}{\partial t_k}\log^* t_k~~~\modulos.\nonumber
\end{eqnarray}
Combining (\ref{bra7}) and (\ref{bra8}), we obtain
\begin{eqnarray}
\lefteqn{(Z+X_0-Y_0)^*-(Z+X_0-Y_0)
\equiv\sum_{l=1}^5 u_l\frac{\partial Y}{\partial u_l}(\log^* u_l-\log u_l)\label{bra9}}\\
&&+(\log^*(1-t_1)-\log(1-t_1))
(u_1\frac{\partial Y}{\partial u_1}+u_2\frac{\partial Y}{\partial u_2}-u_5\frac{\partial Y}{\partial u_5})\nonumber\\
&&+(\log^*(1-\frac{1}{t_2})-\log(1-\frac{1}{t_2}))
(-u_2\frac{\partial Y}{\partial u_2}-u_3\frac{\partial Y}{\partial u_3}+u_5\frac{\partial Y}{\partial u_5})\nonumber\\
&&+(\log^*(1-t_3)-\log(1-t_3))
(u_3\frac{\partial Y}{\partial u_3}+u_4\frac{\partial Y}{\partial u_4}-u_5\frac{\partial Y}{\partial u_5})\nonumber\\
&&+(\log^*(1-\frac{1}{t_4})-\log(1-\frac{1}{t_4}))
(-u_1\frac{\partial Y}{\partial u_1}-u_4\frac{\partial Y}{\partial u_4}+u_5\frac{\partial Y}{\partial u_5})\nonumber\\
&&-\sum_{k=1}^4 t_k\frac{\partial X^*}{\partial t_k}\log^* t_k+\sum_{k=1}^4 t_k\frac{\partial X}{\partial t_k}\log t_k~~~\modulos.\nonumber
\end{eqnarray}
From (\ref{eq20}), we know
\begin{eqnarray*}
\lefteqn{u_1\frac{\partial Y}{\partial u_1}+u_2\frac{\partial Y}{\partial u_2}-u_5\frac{\partial Y}{\partial u_5}}\\
&&=-\log(1-u_1)+\log u_2-\log(1-u_2)+\log u_2+\log(1-u_5)\equiv-\log^* t_1\modu,
\end{eqnarray*}
and
\begin{eqnarray*}
-u_2\frac{\partial Y}{\partial u_2}-u_3\frac{\partial Y}{\partial u_3}+u_5\frac{\partial Y}{\partial u_5}\equiv-\log^* t_2\modu,\\
u_3\frac{\partial Y}{\partial u_3}+u_4\frac{\partial Y}{\partial u_4}-u_5\frac{\partial Y}{\partial u_5}\equiv-\log^* t_3\modu,\\
-u_1\frac{\partial Y}{\partial u_1}-u_4\frac{\partial Y}{\partial u_4}+u_5\frac{\partial Y}{\partial u_5}\equiv-\log^* t_4\modu.
\end{eqnarray*}
Applying (\ref{bra4}) and (\ref{bra5}) to (\ref{bra9}), we obtain
\begin{eqnarray*}
\lefteqn{(Z+X_0-Y_0)^*-(Z+X_0-Y_0)\equiv\sum_{l=1}^5 u_l\frac{\partial Y}{\partial u_l}(\log^* u_l-\log u_l)}\\
&&-\sum_{k=1}^4(t_k\frac{\partial X^*}{\partial t_k}-t_k\frac{\partial X}{\partial t_k})(-\log^* t_k)
-\sum_{k=1}^4 t_k\frac{\partial X^*}{\partial t_k}\log^* t_k+\sum_{k=1}^4 t_k\frac{\partial X}{\partial t_k}\log t_k\\
&&\equiv (X_0^*-X_0)-(Y_0^*-Y_0)\modulos,
\end{eqnarray*}
which shows (\ref{bra6}).

Now we will prove $Z=0$ for certain log-branch. 
Direct calculation shows the imaginary part of (\ref{eq22}) becomes
\begin{eqnarray*}
\lefteqn{D(t_1)-D(\frac{1}{t_2})+D(t_3)-D(\frac{1}{t_4})-\log|t_1|\arg(1-t_1)}\\
&&~~~-\log|t_2|\arg(1-\frac{1}{t_2})-\log|t_3|\arg(1-t_3)-\log|t_4|\arg(1-\frac{1}{t_4})\\
&&=D(u_1)+D(u_2)-D(\frac{1}{u_3})-D(\frac{1}{u_4})+D(u_5)+\log|u_1|\arg u_2+\arg u_1\log |u_2|\\
&&~~~-\log|u_1|\arg(1-u_1)-\log|u_2|\arg(1-u_2)-\log|u_3|\arg(1-\frac{1}{u_3})\\
&&~~~-\log|u_4|\arg(1-\frac{1}{u_4})-\log|u_5|\arg(1-u_5)\\
&&~~~-\log|u_1|\arg u_2 -\log|u_2|\arg u_1\\
&&~~~+\log|u_1|\arg(1-u_1)+\log|u_2|\arg(1-u_2)+\log|u_3|\arg(1-\frac{1}{u_3})\\
&&~~~+\log|u_4|\arg(1-\frac{1}{u_4})+\log|u_5|\arg(1-u_5)\\
&&~~~-\arg(1-t_1)\log\left|u_2^{-1}u_1^{-1}(1-u_1)(1-u_2)(1-u_5)^{-1}\right|\\
&&~~~-\arg(1-\frac{1}{t_2})\log\left|u_1(1-u_2)^{-1}(1-\frac{1}{u_3})^{-1}(1-u_5)\right|\\
&&~~~-\arg(1-t_3)\log\left|(1-\frac{1}{u_3})(1-\frac{1}{u_4})(1-u_5)^{-1}\right|\\
&&~~~-\arg(1-\frac{1}{t_4})\log\left|u_2(1-u_1)^{-1}(1-\frac{1}{u_4})^{-1}(1-u_5)\right|.
\end{eqnarray*}
Using $u_5=\frac{1}{u_1u_3}=\frac{1}{u_2u_4}$, we obtain
\begin{eqnarray*}
u_2^{-1}u_1^{-1}(1-u_1)(1-u_2)(1-u_5)^{-1}=u_1''u_2''u_5'=t_1,\\
u_1(1-u_2)^{-1}(1-\frac{1}{u_3})^{-1}(1-u_5)=u_2'u_3'u_5''=t_2,\\
(1-\frac{1}{u_3})(1-\frac{1}{u_4})(1-u_5)^{-1}=u_3''u_4''u_5'=t_3,\\
u_2(1-u_1)^{-1}(1-\frac{1}{u_4})^{-1}(1-u_5)=u_4'u_1'u_5''=t_4.
\end{eqnarray*}
By applying these, we can verify the imaginary part of (\ref{eq22}) is equivalent to
$$D(t_1)-D(\frac{1}{t_2})+D(t_3)-D(\frac{1}{t_4})=D(u_1)+D(u_2)-D(\frac{1}{u_3})-D(\frac{1}{u_4})+D(u_5),$$
which is also equivalent to (\ref{eq21}). On the other hand, (\ref{eq22}) is an analytic function
on certain 3-dimensional open set, so the real part is some real constant.
After evaluating (\ref{eq22}) at $t_1=t_2=t_3=t_4=u_1=u_2=u_3=u_4=i$ and $u_5=-1$,\footnote{
Note that $\li(-1)=-\frac{\pi^2}{12}$.}
we find the real constant is zero. Therefore, we complete the proof of (\ref{eq22}).

The identity (\ref{eq23}) can be obtained from (\ref{eq22}) 
by substituting $t_1$, $t_2$, $t_3$, $t_4$ for $\frac{1}{t_2}$, $\frac{1}{t_3}$, $\frac{1}{t_4}$, $\frac{1}{t_1}$, respectively, 
and applying the following identity
\begin{eqnarray*}
\lefteqn{\log\frac{1}{u_2}\log\frac{1}{u_3}+\left(-\log(1-t_3)+\log(1-\frac{1}{t_2})\right)\log\frac{1}{u_2}}\\
&&~~+\left(-\log(1-t_1)+\log(1-\frac{1}{t_2})\right)\log\frac{1}{u_3}\\
&&=\left(-\log(1-t_3)+\log(1-\frac{1}{t_2})+\log\frac{1}{u_3}\right)\log\frac{1}{u_2}\\
&&~~+\left(-\log(1-t_1)+\log(1-\frac{1}{t_2})\right)\log\frac{1}{u_3}\\
&&\equiv-\left(-\log(1-t_3)+\log(1-\frac{1}{t_2})+\log\frac{1}{u_3}\right)\log{u_2}\\
&&~~+\left(-\log(1-t_1)+\log(1-\frac{1}{t_2})\right)\log\frac{1}{u_3}\\
&&=-\left(-\log(1-t_3)+\log(1-\frac{1}{t_2})\right)\log{u_2}\\
&&~~+\left(-\log(1-t_1)+\log(1-\frac{1}{t_2})-\log u_2\right)\log\frac{1}{u_3}\\
&&\equiv\log{u_2}\log{u_3}-\left(-\log(1-t_3)+\log(1-\frac{1}{t_2})\right)\log{u_2}\\
&&~~-\left(-\log(1-t_1)+\log(1-\frac{1}{t_2})\right)\log{u_3}\modulos.
\end{eqnarray*}

The identities (\ref{eq24}) and (\ref{eq25}) are directly obtained from (\ref{eq22}) and (\ref{eq23}).

Now we assume the edge AB is collapsed to a point (see Figure \ref{cola}).
Then we obtain the following relations.
\begin{eqnarray}\label{eq30}
      \left\{
      \begin{array}{ll}
     u_1=t_1' t_4'',\\
     u_2=t_1' t_2'',\\
      \end{array}\right.
      \left\{
      \begin{array}{ll}
     t_1=u_1'' u_2'',\\
     t_2=u_1 u_2',\\
     t_4=u_1' u_2,\\
     t_1t_2t_4=1.
      \end{array}\right.
\end{eqnarray}

\begin{figure}[h]
\centering
\includegraphics[scale=0.7]{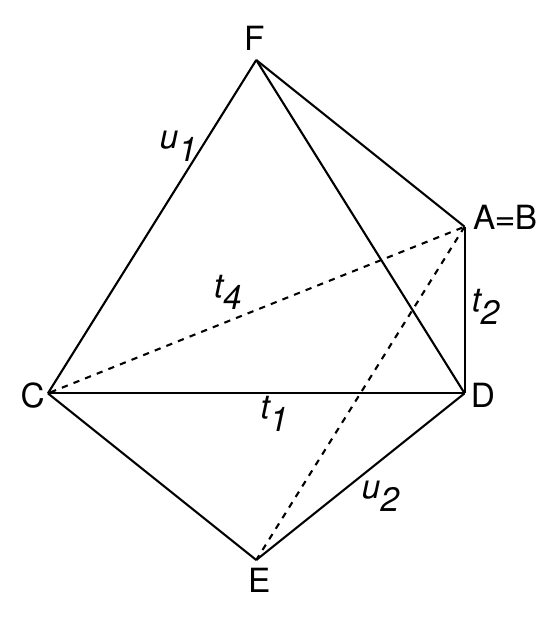}
  \caption{Assignment of shape parameters when the edge AB is collapsed}\label{cola}
\end{figure}

The identity (\ref{eq26}) and the relation (\ref{eq30}) can be obtained from (\ref{eq22}) and (\ref{eq20})
by sending $t_3\rightarrow 1$ and using the following property
$$\lim_{t\rightarrow 1}\left(\log t\,\log(1-t)\right)=0.$$
The identities (\ref{eq27}), (\ref{eq28}) and (\ref{eq29}) can be obtained from (\ref{eq23}), (\ref{eq24}) and (\ref{eq25})
by sending $t_4\rightarrow 1$, $t_1\rightarrow 1$ and $t_2\rightarrow 1$, respectively.

\end{proof}

\begin{proof}[Proof of Theorem \ref{thm}]
Now we prove the theorem by calculating the potential functions on each crossing $n$.
First, consider the case in which no edge of the octahedron on the positive crossing $n$ is collapsed.
Let the variables assigned to the contributing sides be $z_a,\ldots,z_d$ as in Figure \ref{label} and
let $t_1=\frac{z_b}{z_a}$, $t_2=\frac{z_c}{z_b}$, $t_3=\frac{z_d}{z_c}$, $t_4=\frac{z_a}{z_d}$ as in Figure \ref{pic9}(a).
Then the Yokota potential function of the crossing becomes
$$X(z_a,\ldots,z_d):=\li(t_1)-\li(\frac{1}{t_2})+\li(t_3)-\li(\frac{1}{t_4})$$
and
\begin{eqnarray}
\lefteqn{X_0(z_a,\ldots,z_d)=\li(t_1)-\li(\frac{1}{t_2})+\li(t_3)-\li(\frac{1}{t_4})}\label{eq31}\\
&&+\left(-\log(1-t_1)+\log(1-\frac{1}{t_4})\right)\log z_a-\left(-\log(1-t_1)+\log(1-\frac{1}{t_2})\right)\log z_b\nonumber\\
&&+\left(-\log(1-t_3)+\log(1-\frac{1}{t_2})\right)\log z_c-\left(-\log(1-t_3)+\log(1-\frac{1}{t_4})\right)\log z_d.\nonumber
\end{eqnarray}
Likewise, let the variables assigned to the regions be $w_j,\ldots,w_m$ as in Figure \ref{label} and
let $u_1=\frac{w_m}{w_j}$, $u_2=\frac{w_k}{w_j}$, $u_3=\frac{w_k}{w_l}$, $u_4=\frac{w_m}{w_l}$,
$u_5=\frac{w_j w_l}{w_k w_m}$ as in Figure \ref{pic9}(a).
Then the potential function of the colored Jones polynomial of the crossing becomes
$P_f$, which was defined in Lemma \ref{lem1} for $f=1,\ldots,4$, and
\begin{eqnarray}
\lefteqn{P_{10}=\li(u_1)+\li(u_2)-\li(\frac{1}{u_3})-\li(\frac{1}{u_4})+\li(u_5)-\frac{\pi^2}{6}+\log u_1\log u_2}\label{eq32}\\
&&+\left(-\log(1-u_1)-\log(1-u_2)+\log(1-u_5)+\log u_1 +\log u_2\right)\log w_j \nonumber\\
&&+\left(\log(1-u_2)+\log(1-\frac{1}{u_3})-\log(1-u_5)-\log u_1\right)\log w_k \nonumber\\
&&+\left(-\log(1-\frac{1}{u_3})-\log(1-\frac{1}{u_4})+\log(1-u_5)\right)\log w_l \nonumber\\
&&+\left(\log(1-u_1)+\log(1-\frac{1}{u_4})-\log(1-u_5)-\log u_2\right)\log w_m. \nonumber
\end{eqnarray}

We define {\it the remaining term} $Z_n$  by the difference of two potential functions
$V_0-W_0$ of the crossing $n$. In this case, $Z_n=X_0-P_{10}$.

Assume $z_a,\ldots,z_d,w_j,\ldots,w_m$ satisfy the assumption of Lemma \ref{lem5}.\footnote{
Any essential solution $(z_a,\ldots,z_d)$ of $\mathcal{H}_1$ and the corresponding essential solution
$(w_j,\ldots,w_m)$ of $\mathcal{H}_2$ satisfy this assumption.} Let
\begin{eqnarray*}
      &&\left\{
      \begin{array}{ll}
     U_1:=-\log(1-t_1)+\log(1-\frac{1}{t_4}),\\
     U_2:=-\log(1-t_1)+\log(1-\frac{1}{t_2}),\\
     U_3:=-\log(1-t_3)+\log(1-\frac{1}{t_2}),\\
     U_4:=-\log(1-t_3)+\log(1-\frac{1}{t_4}),
      \end{array}\right.\\
      &&\left\{
      \begin{array}{ll}
     T_1:=\log(1-u_1)+\log(1-u_2)-\log(1-u_5)-\log u_1 -\log u_2,\\
     T_2:=-\log(1-u_2)-\log(1-\frac{1}{u_3})+\log(1-u_5)+\log u_1,\\
     T_3:=\log(1-\frac{1}{u_3})+\log(1-\frac{1}{u_4})-\log(1-u_5),\\
     T_4:=-\log(1-u_1)-\log(1-\frac{1}{u_4})+\log(1-u_5)+\log u_2.
      \end{array}\right.
\end{eqnarray*}
Then by (\ref{eq20}),
\begin{eqnarray*}
\left\{
      \begin{array}{ll}
    U_1\equiv\log u_1\equiv\log w_m-\log w_j\modu,\\
    U_2\equiv\log u_2\equiv\log w_k-\log w_j\modu,\\
    U_3\equiv\log u_3\equiv\log w_k-\log w_l\modu,\\
    U_4\equiv\log u_4\equiv\log w_m-\log w_l\modu,\\
      \end{array}\right.
\left\{       \begin{array}{ll}
    T_1\equiv\log t_1\equiv\log z_b-\log z_a\modu,\\
    T_2\equiv\log t_2\equiv\log z_c-\log z_b\modu,\\
    T_3\equiv\log t_3\equiv\log z_d-\log z_c\modu,\\
    T_4\equiv\log t_4\equiv\log z_a-\log z_d\modu,
      \end{array}\right.
\end{eqnarray*}
and $ U_1+U_3=U_2+U_4$, $T_1+T_2+T_3+T_4=0$.
Applying these and (\ref{eq22}) to (\ref{eq31}) and (\ref{eq32}),
we obtain the remaining term $Z_n$ of the crossing $n$ as follows.
\begin{eqnarray*}
\lefteqn{Z_n=X_0-P_{10}\equiv U_1\log z_a-U_2\log z_b+U_3 \log z_c -U_4\log z_d}\\
&&+T_1\log w_j +T_2\log w_k+T_3\log w_l+T_4\log w_m-U_1\log u_2-U_2\log u_1\\
&&+U_1\log(1-u_1)+U_2\log(1-u_2)+U_3\log(1-\frac{1}{u_3})+U_4\log(1-\frac{1}{u_4})-(U_1+U_3)\log(1-u_5)\\
\lefteqn{~~=T_1\log w_j +T_2\log w_k+T_3\log w_l+T_4\log w_m}\\
&&+U_1\left(\log z_a-\log z_d+\log(1-u_1)+\log(1-\frac{1}{u_4})-\log(1-u_5)-\log u_2\right)\\
&&+U_2\left(-\log z_b+\log z_d+\log(1-u_2)-\log(1-\frac{1}{u_4})-\log u_1\right)\\
&&+U_3\left(\log z_c-\log z_d +\log(1-\frac{1}{u_3})+\log(1-\frac{1}{u_4})-\log(1-u_5)\right)\\
\lefteqn{~~=T_2(\log w_k-\log w_j)+T_3(\log w_l-\log w_j)+T_4(\log w_m-\log w_j)}\\
&&+U_1\left(\log z_a-\log z_d-T_4\right)+U_2\left(-\log z_b+\log z_d-T_2-T_3\right)
+U_3\left(\log z_c-\log z_d +T_3\right)\\
\lefteqn{~~\equiv T_2(\log w_k-\log w_j)+T_3(\log w_l-\log w_j)+T_4(\log w_m-\log w_j)}\\
&&+(\log w_m-\log w_j)\left(\log z_a-\log z_d-T_4\right)
  +(\log w_k-\log w_j)\left(-\log z_b+\log z_d-T_2-T_3\right)\\
&&+(\log w_k-\log w_l)\left(\log z_c-\log z_d +T_3\right)\hspace{6cm}\modulos\\
\lefteqn{~~=-(\log w_j-\log w_m)\log z_a-(\log w_k-\log w_j)\log z_b+(\log w_k-\log w_l)\log z_c}\\
&&+(\log w_l-\log w_m)\log z_d.
\end{eqnarray*}
By the same method, we can prove that the remaining term of the negative crossing in Figure \ref{label} is
the same as that of the positive crossing.

Now we consider the case in which only one horizontal edge is collapsed in an octahedron on a positive crossing $n$.
Let the region assigned to $r_l$ be the unbounded region and $z_c=z_d=1$ in Figure \ref{label}.
Also let $t_1=\frac{z_b}{z_a}$, $t_2=\frac{1}{z_b}$, $t_4={z_a}$ and
$u_1=\frac{w_m}{w_j}$, $u_2=\frac{w_k}{w_j}$.
Then the Yokota potential function of the crossing becomes
$$X(z_a,z_b):=\li(t_1)-\li(\frac{1}{t_2})-\li(\frac{1}{t_4})+\frac{\pi^2}{6}$$
and
\begin{eqnarray}
\lefteqn{X_0(z_a,z_b)=\li(t_1)-\li(\frac{1}{t_2})-\li(\frac{1}{t_4})+\frac{\pi^2}{6}}\label{eq33}\\
&&+\left(-\log(1-t_1)+\log(1-\frac{1}{t_4})\right)\log z_a-\left(-\log(1-t_1)+\log(1-\frac{1}{t_2})\right)\log z_b.\nonumber
\end{eqnarray}
The potential function of the colored Jones polynomial of the crossing becomes
$$Y(w_j,w_k,w_m):=P_1(w_j,w_k,0,w_m)=\li(u_1)+\li(u_2)-\frac{\pi^2}{6}+\log u_1\log u_2$$
and
\begin{eqnarray}
\lefteqn{Y_0(w_j,w_k,w_m)=\li(u_1)+\li(u_2)-\frac{\pi^2}{6}+\log u_1\log u_2}\label{eq34}\\
&&+(-\log(1-u_1)-\log(1-u_2)+\log u_1+\log u_2)\log w_j\nonumber\\
&&+(\log(1-u_2)-\log u_1)\log w_k+(\log(1-u_1)-\log u_2)\log w_m.\nonumber
\end{eqnarray}
In this case, the remaining term is $Z_n=X_0-Y_0$. Let
\begin{eqnarray*}
      &&\left\{
      \begin{array}{ll}
     U_1:=-\log(1-t_1)+\log(1-\frac{1}{t_4}),\\
     U_2:=-\log(1-t_1)+\log(1-\frac{1}{t_2}),
      \end{array}\right.\\
      &&\left\{
      \begin{array}{ll}
     T_1:=\log(1-u_1)+\log(1-u_2)-\log u_1 -\log u_2,\\
     T_2:=-\log(1-u_2)+\log u_1,\\
     T_4:=-\log(1-u_1)+\log u_2.
      \end{array}\right.
\end{eqnarray*}
Then by (\ref{eq30}),
\begin{eqnarray*}
\left\{
      \begin{array}{ll}
    U_1\equiv\log u_1\equiv\log w_m-\log w_j\modu,\\
    U_2\equiv\log u_2\equiv\log w_k-\log w_j\modu,\\
      \end{array}\right.
\left\{       \begin{array}{ll}
    T_1\equiv\log t_1\equiv\log z_b-\log z_a\modu,\\
    T_2\equiv\log t_2\equiv-\log z_b\modu,\\
    T_4\equiv\log t_4\equiv\log z_a\modu,
      \end{array}\right.
\end{eqnarray*}
and $T_1+T_2+T_4=0$. Applying these and (\ref{eq26}) to (\ref{eq33}) and (\ref{eq34}),
we obtain the remaining term $Z_n$ as follows.
\begin{eqnarray*}
Z_n&=&X_0-Y_0\equiv U_1\log z_a-U_2\log z_b+T_1\log w_j+T_2\log w_k+T_4\log w_m-U_1T_4-U_2T_2\\
&=&U_1\log z_a-U_2\log z_b+T_2(\log w_k-\log w_j-U_2)+T_4(\log w_m-\log w_j-U_1)\\
&\equiv&U_1\log z_a-U_2\log z_b\\
&&-\log z_b(\log w_k-\log w_j-U_2)+\log z_a(\log w_m-\log w_j-U_1)\modulos\\
&=&-(\log w_j-\log w_m)\log z_a-(\log w_k-\log w_j)\log z_b.
\end{eqnarray*}
By the same method, we can prove the remaining term of the negative crossing in this case is
the same as that of the positive crossing.
On the other hand, the remaining term becomes
$$Z_n=-(\log w_k-\log w_j)\log z_b+(\log w_k-\log w_l)\log z_c$$
when the region assigned to $w_m$ is the unbounded region,
$$Z_n=(\log w_k-\log w_l)\log z_c+(\log w_l-\log w_m)\log z_d$$
when the region assigned to $w_j$ is the unbounded region, and
$$Z_n=-(\log w_j-\log w_m)\log z_a+(\log w_l-\log w_m)\log z_d$$
when the region assigned to $w_k$ is the unbounded region.

Now we consider the case when the crossing point $n$ is the endpoint of $I$ or $J$.
There are four cases as in Figure \ref{pic19}.
We only prove the case of Figure \ref{pic19}(a) because the others can be proved by the same method.

\begin{figure}[h]
\centering
\setlength{\unitlength}{0.4cm}
\subfigure[]
  {\begin{picture}(9,7)\thicklines
    \put(6,5){\vector(-1,-1){4}}
    \dashline{0.5}(4.2,2.8)(6,1)
    \put(5.2,1.8){\vector(1,-1){0.8}}
    \put(2,5){\line(1,-1){1.8}}
    \put(3.5,1){$w_j$}
%    \put(6,2.5){$r_k$}
    \put(3.5,4.5){$w_l$}
    \put(1,2.5){$w_m$}
    \put(1,5.3){$z_d$}
    \put(6,5.3){$z_c$}
    \put(1,0.2){$z_a$}
    \put(6,0.2){1}
  \end{picture}}
\subfigure[]
  {\begin{picture}(9,7)\thicklines
    \put(2,5){\vector(1,-1){4}}
    \put(6,5){\line(-1,-1){1.8}}
    \dashline{0.5}(2,1)(3.8,2.8)
    \put(2.8,1.8){\vector(-1,-1){0.8}}
    \put(3.5,1){$w_j$}
    \put(6,2.5){$w_k$}
    \put(3.5,4.5){$w_l$}
%    \put(1,2.5){$r_m$}
    \put(1,5.3){$z_d$}
    \put(6,5.3){$z_c$}
    \put(1.2,0.2){1}
    \put(6,0.2){$z_b$}
  \end{picture}}
\subfigure[]
  {\begin{picture}(9,7)\thicklines
   \put(4.2,2.8){\vector(1,-1){1.8}}
   \put(2,5){\line(1,-1){1.8}}
   \dashline{0.5}(6,5)(4,3)
   \put(4,3){\vector(-1,-1){2}}
    \put(3.5,1){$w_j$}
    \put(6,2.5){$w_k$}
%    \put(3.5,4.5){$r_l$}
    \put(1,2.5){$w_m$}
    \put(6,5.3){1}
    \put(1,0.2){$z_a$}
    \put(6,0.2){$z_b$}
    \put(1,5.3){$z_d$}
  \end{picture}}
\subfigure[]
  {\begin{picture}(9,7)\thicklines
   \put(3.8,2.8){\vector(-1,-1){1.8}}
   \put(4.2,3.2){\line(1,1){1.8}}
   \dashline{0.5}(2,5)(4,3)
   \put(4,3){\vector(1,-1){2}}
    \put(3.5,1){$w_j$}
    \put(6,2.5){$w_k$}
    \put(3.5,4.5){$w_l$}
%    \put(1,2.5){$r_m$}
    \put(6,5.3){$z_c$}
    \put(1,0.2){$z_a$}
    \put(6,0.2){$z_b$}
    \put(1.2,5.3){1}
  \end{picture}}\caption{Four cases of the endpoint of $I$ or $J$}\label{pic19}
\end{figure}
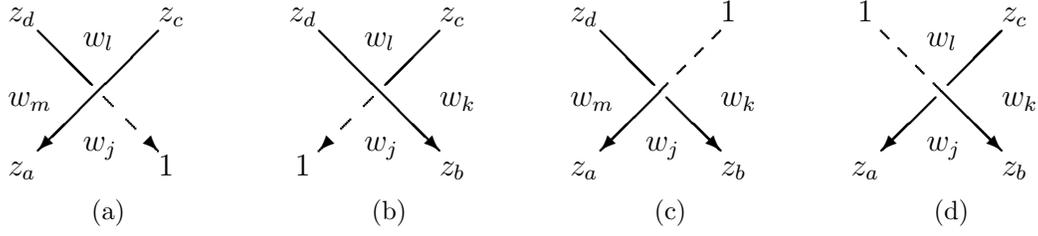

First, we assume all three regions in Figure \ref{pic19}(a) are bounded.
Then in Figure \ref{pic9}(a), the edge ${\rm B}_n{\rm E}_n$ is collapsed to a point and
$\frac{z_d}{z_c}$, $\frac{z_a}{z_d}$, $\frac{w_m}{w_j}$, $\frac{w_j}{w_l}$ are assigned to
the edges ${\rm C}_n{\rm D}_n$, ${\rm D}_n{\rm A}_n$, ${\rm A}_n{\rm F}_n$, ${\rm C}_n{\rm F}_n$, respectively.
Also, we obtain
\begin{equation}\label{eq35}
\frac{z_d}{z_c}=\left(\frac{w_j}{w_l}\right)''=1-\frac{w_l}{w_j} ~\text{ and }~
\frac{w_m}{w_j}=\left(\frac{z_a}{z_d}\right)''=1-\frac{z_d}{z_a}.
\end{equation}
Applying (\ref{eq35}) to Yokota potential function
$X(z_a,z_c,z_d):=\li(\frac{z_d}{z_c})-\li(\frac{z_d}{z_a})$, we obtain
\begin{eqnarray*}
\lefteqn{X_0=\li(\frac{z_d}{z_c})-\li(\frac{z_d}{z_a})+\log(1-\frac{z_d}{z_a})\log z_a-\log(1-\frac{z_d}{z_c})\log z_c}\\
&&~~-\left(-\log(1-\frac{z_d}{z_c})+\log(1-\frac{z_d}{z_a})\right)\log z_d\\
&&=\li(\frac{z_d}{z_c})-\li(1-\frac{w_m}{w_j})+\log\frac{w_m}{w_j}(\log z_a-\log z_d)
+\log\frac{w_l}{w_j}(\log z_d-\log z_c).\\
\end{eqnarray*}
Also, applying (\ref{eq35}) to the potential function of the colored Jones polynomial
$Y(w_j,w_l,w_m):=P_1(w_j,w_j,w_l,w_m)=\li(\frac{w_m}{w_j})-\li(\frac{w_l}{w_j})$, we obtain
\begin{eqnarray*}
\lefteqn{Y_0=\li(\frac{w_m}{w_j})-\li(\frac{w_l}{w_j})
-\left(\log(1-\frac{w_m}{w_j})-\log(1-\frac{w_l}{w_j})\right)\log w_j}\\
&&-\log(1-\frac{w_l}{w_j})\log w_l+\log(1-\frac{w_m}{w_j})\log w_m\\
&&=\li(\frac{w_m}{w_j})-\li(1-\frac{z_d}{z_c})-\log\frac{z_d}{z_a}(\log w_j-\log w_m)
-\log\frac{z_d}{z_c}(\log w_l-\log w_j).
\end{eqnarray*}
Using the well-known identity $\li(z)+\li(1-z)=\frac{\pi^2}{6}-\log z\log(1-z)$ for $z\in\mathbb{C}-\{0,1\}$ from \cite{Lewin1},
we obtain the remaining term
\begin{eqnarray*}
Z_n&=&X_0-Y_0\equiv-\log\frac{z_d}{z_c}\log\frac{w_l}{w_j}+\log\frac{w_m}{w_j}\log\frac{z_d}{z_a}\\
&&+\log\frac{w_m}{w_j}(\log z_a-\log z_d)+\log\frac{w_l}{w_j}(\log z_d-\log z_c)\\
&&+\log\frac{z_d}{z_a}(\log w_j-\log w_m)+\log\frac{z_d}{z_c}(\log w_l-\log w_j)\\
&=&\log\frac{w_l}{w_j}(-\log\frac{z_d}{z_c}+\log z_d-\log z_c)
+\log\frac{w_m}{w_j}(\log\frac{z_d}{z_a}+\log z_a-\log z_d)\\
&&+\log\frac{z_d}{z_a}(\log w_j-\log w_m)+\log\frac{z_d}{z_c}(\log w_l-\log w_j)\\
&\equiv&(\log{w_l}-\log{w_j})\left(-\log\frac{z_d}{z_c}+\log z_d-\log z_c\right)\\
&&+(\log{w_m}-\log{w_j})\left(\log\frac{z_d}{z_a}+\log z_a-\log z_d\right)\\
&&+\log\frac{z_d}{z_a}(\log w_j-\log w_m)+\log\frac{z_d}{z_c}(\log w_l-\log w_j)\modulos\\
&=&-(\log{w_j}-\log{w_m})\log z_a+(\log{w_j}-\log{w_l})\log z_c+(\log w_l-\log w_m)\log z_d.
\end{eqnarray*}

Finally, we consider the case when the region assigned with $w_l$ in Figure \ref{pic19}(a) is unbounded.
Then the edges ${\rm B}_n{\rm E}_n$ and ${\rm C}_n{\rm D}_n$ are collapsed to points. Furthermore,
$z_c=z_d=1$ and $w_l=0$, and $z_a$, $\frac{w_m}{w_j}$ are assigned
to the edges ${\rm D}_n{\rm A}_n$, ${\rm A}_n{\rm F}_n$ in Figure \ref{pic9}(a), respectively.
Applying
\begin{equation*}
\frac{w_m}{w_j}=z_a''=1-\frac{1}{z_a}
\end{equation*}
to Yokota potential function
$X(z_a):=-\li(\frac{1}{z_a})+\frac{\pi^2}{6}$, we obtain
$$X_0=-\li(\frac{1}{z_a})+\frac{\pi^2}{6}+\log(1-\frac{1}{z_a})\log z_a
=-\li(\frac{1}{z_a})+\frac{\pi^2}{6}+\log\frac{w_m}{w_j}\log z_a,$$
and to the potential function of the colored Jones polynomial
$Y(w_j,w_m):=P_1(w_j,w_j,0,w_m)=\li(\frac{w_m}{w_j})$, we obtain
$$Y_0=\li(\frac{w_m}{w_j})+\log(1-\frac{w_m}{w_j})(\log w_m-\log w_j)
=\li(1-\frac{1}{z_a})+\log\frac{1}{z_a}(\log w_m-\log w_j).$$
Therefore, we obtain the remaining term
\begin{eqnarray*}
Z_n&:=&X_0-Y_0\equiv\log\frac{1}{z_a}\log\frac{w_m}{w_j}+\log\frac{w_m}{w_j}\log z_a
-\log\frac{1}{z_a}(\log w_m-\log w_j)\\
&=&\log\frac{1}{z_a}(\log\frac{w_m}{w_j}-\log w_m+\log w_j)+\log\frac{w_m}{w_j}\log z_a\\
&\equiv&-\log{z_a}(\log\frac{w_m}{w_j}-\log w_m+\log w_j)+\log\frac{w_m}{w_j}\log z_a\modulos\\
&=&-(\log w_j-\log w_m)\log z_a.
\end{eqnarray*}
Likewise, we can show the remaining term becomes
$$Z_n=(\log w_l-\log w_m)\log z_d$$
when the region assigned to $w_m$ in Figure \ref{pic19}(a) is unbounded.
The remaining three cases in Figure \ref{pic19} can be obtained by the same method.

We complete the proof by proving 
$$\sum_{n~:~\text{crossings of }G}Z_n=0.$$
Note that we defined a contributing side of $G$ in Section \ref{ch31}. 
Assume the side assigned by $z_a$ in Figure \ref{final} is a contributing side of $G$.
(This means that $z_a\neq 1$.)

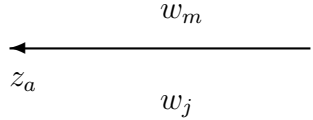
\begin{figure}[h]
\centering
\begin{picture}(4,2)\thicklines
   \put(4,1){\vector(-1,0){4}}
   \put(2,1.4){$w_m$}
   \put(2,0.2){$w_j$}
   \put(0,0.5){$z_a$}
  \end{picture}
  \caption{Contributing side assigned by $z_a$}\label{final}
\end{figure}

If the side goes out of the crossing point $n_1$,
then the coefficient of $\log z_a$ in $Z_{n_1}$ is $-(\log w_j-\log w_l)$, and if
the side goes into the crossing point $n_2$,
then the coefficient of $\log z_a$ in $Z_{n_2}$ is $(\log w_j-\log w_l)$. They are cancelled by each other,
and this happens for all the contributing sides.

\end{proof}

\appendix
\section{Appendix}
\subsection{Formal substitution of the colored Jones polynomial and the potential function}\label{app1}

In this Appendix, we induce the potential function $W(w_1,\ldots,w_m)$ defined in Section \ref{ch32}
from the formal substitution (\ref{formalsubsti}) of the colored Jones polynomial.

The colored Jones polynomial is determined
by the R-matrix and the local maxima/minima (see \cite{Murakami00a} for reference).
However, as seen in (\ref{formalsubsti}), the local maxima/minima do not have an effect on
the formal substitution. So we only consider the R-matrix of the colored Jones polynomial: 

\begin{eqnarray*}
R_{l,m}^{j,k}&=&\delta_{m,j-h}\delta_{l,k+h}\frac{(q^{-1})_j(q^{-1})_k^{-1}}{(q^{-1})_h(q^{-1})_l^{-1}(q^{-1})_m}(-1)^{k+m+1}q^{-km-(k+m+1)/2},\\
(R^{-1})_{l,m}^{j,k}&=&\delta_{m,j+h}\delta_{l,k-h}\frac{(q)_j^{-1}(q)_k}{(q)_h(q)_l(q)_m^{-1}}(-1)^{j+l+1}q^{jl+(j+l+1)/2},
\end{eqnarray*}
where $j,k,l,m,h\in\{0,1,\ldots,N-1\}$ and 
$\delta_{j,k}$
%$\delta_{j,k}=\left\{
%      \begin{array}{ll}
%     1&\text{ if }j=k,\\
%     0&\text{ if }j\neq k,
%      \end{array}\right\}$
is the Kronecker's delta. If $R_{l,m}^{j,k}\neq 0$, then $h$ is uniquely determined by the formula $h=j-m=l-k$,
and if $(R^{-1})_{l,m}^{j,k}\neq 0$, then $h=m-j=k-l$. 

Note that this R-matrix is the inverse of the one in \cite{Murakami00a}.
This implies the colored Jones polynomial of a knot $K$ here 
is the one of the mirror image $\overline{K}$ in \cite{Murakami00a}.
This choice is natural to \cite{YokotaPre} and Theorem \ref{thm}.

Let $K$ be the hyperbolic knot with a fixed diagram and 
$G$ be the diagram defined in Section \ref{ch21} with the orientation from $J$ to $I$. 
We assign 0 to one bounded region of $G$, then
assign variables $r_1,\ldots,r_m\in\{0,1,\ldots,N-1\}$ to the remaining bounded regions of $G$ and $r_{m+1}\in\{0,1,\ldots,N-1\}$ to the unbounded region.
We assign variables to each side according to the signed sum of variables of adjacent regions with orientations modulo $N$
(see Figure \ref{pic8} for an example).

\begin{figure}[h]
\centering
  \includegraphics[scale=0.5]{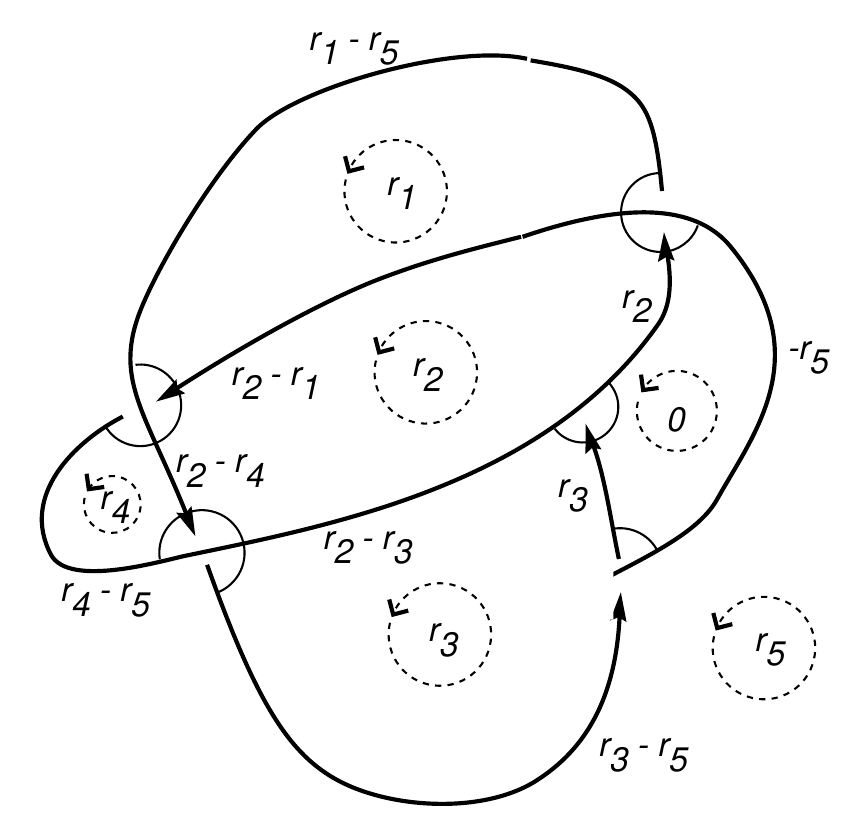}
  \caption{Assigning variables to each region and side}\label{pic8}
\end{figure}

For each non-trivalent vertex of $G$, we assign the R-matrix to the positive crossing and the inverse to the negative crossing. Then we apply the formal substitution (\ref{formalsubsti}) to each R-matrix
and substitute $q^{r_n}$ to $w_n$ as below.
In the substitution process, if $r_n=0$, then we put $w_n=1$.
Note that we apply the same R-matrix or its inverse in different forms according to
the position of the collapsed horizontal edge. If none of the horizontal edges are collapsed in the octahedron, then we
choose any formal substitution among the four possibilities.
For positive crossings :

%Positive crossing

  {\setlength{\unitlength}{0.4cm}
  \begin{picture}(18,9.5)\thicklines
    \put(6,7){\vector(-1,-1){4}}
    \put(2,7){\line(1,-1){1.8}}
    \put(4.2,4.8){\vector(1,-1){1.8}}
    \put(3.5,3){$r_j$}
    \put(6,4.5){$r_k$}
    \put(3.5,6.5){$r_l$}
    \put(1,4.5){$r_m$}
    \put(0,7.3){$r_l-r_m$}
    \put(5,7.3){$r_k-r_l$}
    \put(0,2.2){$r_j-r_m$}
    \put(5,2.2){$r_k-r_j$}
    \put(4,5){\arc(-0.6,0.6){270}}
    \put(9,5){: $\displaystyle\frac{(q)_{r_l-r_m}(q^{-1})^{-1}_{r_k-r_l}}{(q)_{r_j+r_l-r_k-r_m}(q^{-1})^{-1}_{r_j-r_m}(q)_{r_k-r_j}}(-1)^{r_l+r_j+1}$}
    \put(10,2.5){$\times q^{(r_m-r_j)(r_k-r_j)-(2r_k-r_l-r_j+1)/2}$}
    \put(0,0){$\sim\exp\left\{\frac{N}{2\pi i}\left(-\li(\frac{w_l}{w_m})-\li(\frac{w_l}{w_k})+\li(\frac{w_jw_l}{w_kw_m})+\li(\frac{w_m}{w_j})+\li(\frac{w_k}{w_j})-\frac{\pi^2}{6}+\log\frac{w_m}{w_j}\log\frac{w_k}{w_j}\right)\right\}$,}
  \end{picture}}

  {\setlength{\unitlength}{0.4cm}
  \begin{picture}(18,9.5)\thicklines
    \put(6,7){\vector(-1,-1){4}}
    \put(2,7){\line(1,-1){1.8}}
    \put(4.2,4.8){\vector(1,-1){1.8}}
    \put(3.5,3){$r_j$}
    \put(6,4.5){$r_k$}
    \put(3.5,6.5){$r_l$}
    \put(1,4.5){$r_m$}
    \put(0,7.3){$r_l-r_m$}
    \put(5,7.3){$r_k-r_l$}
    \put(0,2.2){$r_j-r_m$}
    \put(5,2.2){$r_k-r_j$}
    \put(4,5){\arc(-0.6,-0.6){270}}
    \put(9,5){: $\displaystyle\frac{(q^{-1})_{r_l-r_m}(q^{-1})^{-1}_{r_k-r_l}}{(q^{-1})_{r_j+r_l-r_k-r_m}(q^{-1})^{-1}_{r_j-r_m}(q^{-1})_{r_k-r_j}}(-1)^{r_l+r_j+1}$}
    \put(10,2.5){$\times q^{-(r_k-r_l)(r_k-r_j)-(2r_k-r_l-r_j+1)/2}$}
    \put(0,0){$\sim\exp\left\{\frac{N}{2\pi i}\left(\li(\frac{w_m}{w_l})-\li(\frac{w_l}{w_k})-\li(\frac{w_kw_m}{w_jw_l})+\li(\frac{w_m}{w_j})-\li(\frac{w_j}{w_k})+\frac{\pi^2}{6}-\log\frac{w_k}{w_l}\log\frac{w_k}{w_j}\right)\right\}$,}
  \end{picture}}

  {\setlength{\unitlength}{0.4cm}
  \begin{picture}(18,9.5)\thicklines
    \put(6,7){\vector(-1,-1){4}}
    \put(2,7){\line(1,-1){1.8}}
    \put(4.2,4.8){\vector(1,-1){1.8}}
    \put(3.5,3){$r_j$}
    \put(6,4.5){$r_k$}
    \put(3.5,6.5){$r_l$}
    \put(1,4.5){$r_m$}
    \put(0,7.3){$r_l-r_m$}
    \put(5,7.3){$r_k-r_l$}
    \put(0,2.2){$r_j-r_m$}
    \put(5,2.2){$r_k-r_j$}
    \put(4,5){\arc(0.6,-0.6){270}}
    \put(9,5){: $\displaystyle\frac{(q^{-1})_{r_l-r_m}(q)^{-1}_{r_k-r_l}}{(q)_{r_j+r_l-r_k-r_m}(q)^{-1}_{r_j-r_m}(q^{-1})_{r_k-r_j}}(-1)^{r_l+r_j+1}$}
    \put(10,2.5){$\times q^{(r_m-r_l)(r_k-r_l)-(2r_k-r_l-r_j+1)/2}$}
    \put(0,0){$\sim\exp\left\{\frac{N}{2\pi i}\left(\li(\frac{w_m}{w_l})+\li(\frac{w_k}{w_l})+\li(\frac{w_jw_l}{w_kw_m})-\li(\frac{w_j}{w_m})-\li(\frac{w_j}{w_k})-\frac{\pi^2}{6}+\log\frac{w_m}{w_l}\log\frac{w_k}{w_l}\right)\right\}$,}
  \end{picture}}

  {\setlength{\unitlength}{0.4cm}
  \begin{picture}(18,9.5)\thicklines
    \put(6,7){\vector(-1,-1){4}}
    \put(2,7){\line(1,-1){1.8}}
    \put(4.2,4.8){\vector(1,-1){1.8}}
    \put(3.5,3){$r_j$}
    \put(6,4.5){$r_k$}
    \put(3.5,6.5){$r_l$}
    \put(1,4.5){$r_m$}
    \put(0,7.3){$r_l-r_m$}
    \put(5,7.3){$r_k-r_l$}
    \put(0,2.2){$r_j-r_m$}
    \put(5,2.2){$r_k-r_j$}
    \put(4,5){\arc(0.6,0.6){270}}
    \put(9,5){: $\displaystyle\frac{(q)_{r_l-r_m}(q)^{-1}_{r_k-r_l}}{(q^{-1})_{r_j+r_l-r_k-r_m}(q)^{-1}_{r_j-r_m}(q)_{r_k-r_j}}(-1)^{r_l+r_j+1}$}
    \put(10,2.5){$\times q^{-(r_m-r_l)(r_m-r_j)-(r_l+r_j-2r_m+1)/2}$}
    \put(0,0){$\sim\exp\left\{\frac{N}{2\pi i}\left(-\li(\frac{w_l}{w_m})+\li(\frac{w_k}{w_l})-\li(\frac{w_kw_m}{w_jw_l})-\li(\frac{w_j}{w_m})+\li(\frac{w_k}{w_j})+\frac{\pi^2}{6}-\log\frac{w_m}{w_l}\log\frac{w_m}{w_j}\right)\right\}$.}
  \end{picture}}

\vspace{0.5cm}

For negative crossings :
%Negative crossing

  {\setlength{\unitlength}{0.4cm}
  \begin{picture}(18,9.5)\thicklines
    \put(2,7){\vector(1,-1){4}}
   \put(6,7){\line(-1,-1){1.8}}
   \put(3.8,4.8){\vector(-1,-1){1.8}}
    \put(3.5,3){$r_j$}
    \put(6,4.5){$r_k$}
    \put(3.5,6.5){$r_l$}
    \put(1,4.5){$r_m$}
    \put(0,7.3){$r_l-r_m$}
    \put(5,7.3){$r_k-r_l$}
    \put(0,2.2){$r_j-r_m$}
    \put(5,2.2){$r_k-r_j$}
    \put(4,5){\arc(-0.6,0.6){270}}
    \put(9,5){: $\displaystyle\frac{(q)^{-1}_{r_l-r_m}(q^{-1})_{r_k-r_l}}{(q^{-1})_{r_k+r_m-r_j-r_l}(q^{-1})_{r_j-r_m}(q)^{-1}_{r_k-r_j}}(-1)^{r_l+r_j+1}$}
    \put(10,2.5){$\times q^{-(r_j-r_m)(r_j-r_k)+(r_l+r_j-2r_m+1)/2}$}
    \put(0,0){$\sim\exp\left\{\frac{N}{2\pi i}\left(\li(\frac{w_l}{w_m})+\li(\frac{w_l}{w_k})-\li(\frac{w_jw_l}{w_kw_m})-\li(\frac{w_m}{w_j})-\li(\frac{w_k}{w_j})+\frac{\pi^2}{6}-\log\frac{w_j}{w_m}\log\frac{w_j}{w_k}\right)\right\}$,}
  \end{picture}}

  {\setlength{\unitlength}{0.4cm}
  \begin{picture}(18,9.5)\thicklines
    \put(2,7){\vector(1,-1){4}}
   \put(6,7){\line(-1,-1){1.8}}
   \put(3.8,4.8){\vector(-1,-1){1.8}}
    \put(3.5,3){$r_j$}
    \put(6,4.5){$r_k$}
    \put(3.5,6.5){$r_l$}
    \put(1,4.5){$r_m$}
    \put(0,7.3){$r_l-r_m$}
    \put(5,7.3){$r_k-r_l$}
    \put(0,2.2){$r_j-r_m$}
    \put(5,2.2){$r_k-r_j$}
    \put(4,5){\arc(-0.6,-0.6){270}}
    \put(9,5){: $\displaystyle\frac{(q^{-1})^{-1}_{r_l-r_m}(q^{-1})_{r_k-r_l}}{(q)_{r_k+r_m-r_j-r_l}(q^{-1})_{r_j-r_m}(q^{-1})^{-1}_{r_k-r_j}}(-1)^{r_l+r_j+1}$}
    \put(10,2.5){$\times q^{(r_l-r_k)(r_j-r_k)+(2r_k-r_l-r_j+1)/2}$}
    \put(0,0){$\sim\exp\left\{\frac{N}{2\pi i}\left(-\li(\frac{w_m}{w_l})+\li(\frac{w_l}{w_k})+\li(\frac{w_k w_m}{w_j w_l})-\li(\frac{w_m}{w_j})+\li(\frac{w_j}{w_k})-\frac{\pi^2}{6}+\log\frac{w_l}{w_k}\log\frac{w_j}{w_k}\right)\right\}$,}
  \end{picture}}

  {\setlength{\unitlength}{0.4cm}
  \begin{picture}(18,9.5)\thicklines
    \put(2,7){\vector(1,-1){4}}
   \put(6,7){\line(-1,-1){1.8}}
   \put(3.8,4.8){\vector(-1,-1){1.8}}
    \put(3.5,3){$r_j$}
    \put(6,4.5){$r_k$}
    \put(3.5,6.5){$r_l$}
    \put(1,4.5){$r_m$}
    \put(0,7.3){$r_l-r_m$}
    \put(5,7.3){$r_k-r_l$}
    \put(0,2.2){$r_j-r_m$}
    \put(5,2.2){$r_k-r_j$}
    \put(4,5){\arc(0.6,-0.6){270}}
    \put(9,5){: $\displaystyle\frac{(q^{-1})^{-1}_{r_l-r_m}(q)_{r_k-r_l}}{(q^{-1})_{r_k+r_m-r_j-r_l}(q)_{r_j-r_m}(q^{-1})^{-1}_{r_k-r_j}}(-1)^{r_l+r_j+1}$}
    \put(10,2.5){$\times q^{-(r_l-r_m)(r_l-r_k)+(r_l+r_j-2r_m+1)/2}$}
    \put(0,0){$\sim\exp\left\{\frac{N}{2\pi i}\left(-\li(\frac{w_m}{w_l})-\li(\frac{w_k}{w_l})-\li(\frac{w_j w_l}{w_k w_m})+\li(\frac{w_j}{w_m})+\li(\frac{w_j}{w_k})+\frac{\pi^2}{6}-\log\frac{w_l}{w_m}\log\frac{w_l}{w_k}\right)\right\}$,}
  \end{picture}}

  {\setlength{\unitlength}{0.4cm}
  \begin{picture}(18,9.5)\thicklines
    \put(2,7){\vector(1,-1){4}}
   \put(6,7){\line(-1,-1){1.8}}
   \put(3.8,4.8){\vector(-1,-1){1.8}}
    \put(3.5,3){$r_j$}
    \put(6,4.5){$r_k$}
    \put(3.5,6.5){$r_l$}
    \put(1,4.5){$r_m$}
    \put(0,7.3){$r_l-r_m$}
    \put(5,7.3){$r_k-r_l$}
    \put(0,2.2){$r_j-r_m$}
    \put(5,2.2){$r_k-r_j$}
    \put(4,5){\arc(0.6,0.6){270}}
    \put(9,5){: $\displaystyle\frac{(q)^{-1}_{r_l-r_m}(q)_{r_k-r_l}}{(q)_{r_k+r_m-r_j-r_l}(q)_{r_j-r_m}(q)^{-1}_{r_k-r_j}}(-1)^{r_l+r_j+1}$}
    \put(10,2.5){$\times q^{(r_l-r_m)(r_j-r_m)+(r_l+r_j-2r_m+1)/2}$}
    \put(0,0){$\sim\exp\left\{\frac{N}{2\pi i}\left(\li(\frac{w_l}{w_m})-\li(\frac{w_k}{w_l})+\li(\frac{w_k w_m}{w_j w_l})+\li(\frac{w_j}{w_m})-\li(\frac{w_k}{w_j})-\frac{\pi^2}{6}+\log\frac{w_l}{w_m}\log\frac{w_j}{w_m}\right)\right\}$.}
  \end{picture}}

\vspace{0.5cm}

For the trivalent vertices of $G$, we assign $0$ to the sides in $I$ or $J$,
then apply the same formal substitution to the R-matrix as follows
(here, we use the same form of the R-matrix disregarding whether certain horizontal edge is collapsed or not).

For the endpoint of $I$ :

%Trivalent
  {\setlength{\unitlength}{0.4cm}
  \begin{picture}(18,7.5)\thicklines
    \put(6,5){\vector(-1,-1){4}}
    \dashline{0.5}(4.2,2.8)(6,1)
    \put(5.2,1.8){\vector(1,-1){0.8}}
    \put(2,5){\line(1,-1){1.8}}
    \put(3.5,1){$r_j$}
%    \put(6,2.5){$r_k$}
    \put(3.5,4.5){$r_l$}
    \put(1,2.5){$r_m$}
    \put(0,5.3){$r_l-r_m$}
    \put(5,5.3){$r_j-r_l$}
    \put(0,0.2){$r_j-r_m$}
    \put(6,0.2){0}
    \put(9,3.5){: $\displaystyle\frac{(q^{-1})_{r_j-r_m}}{(q^{-1})_{r_j-r_l}}(-1)^{r_l+r_j+1}q^{-(r_j-r_l+1)/2}$}
    \put(9,1){$\sim\exp\left\{\frac{N}{2\pi i}\left(\li(\frac{w_m}{w_j})-\li(\frac{w_l}{w_j})\right)\right\}$,}
  \end{picture}}

  {\setlength{\unitlength}{0.4cm}
  \begin{picture}(18,7)\thicklines
    \put(2,5){\vector(1,-1){4}}
    \put(6,5){\line(-1,-1){1.8}}
%    \put(6,5){\vector(-1,-1){4}}
    \dashline{0.5}(2,1)(3.8,2.8)
    \put(2.8,1.8){\vector(-1,-1){0.8}}
    \put(3.5,1){$r_j$}
    \put(3.5,4.5){$r_l$}
    \put(6,2.5){$r_k$}
    \put(0,5.3){$r_l-r_j$}
    \put(5,5.3){$r_k-r_l$}
    \put(1.3,0){0}
    \put(5,0.2){$r_k-r_j$}
    \put(9,3.5){: $\displaystyle\frac{(q)_{r_k-r_j}}{(q)_{r_l-r_j}}(-1)^{r_l+r_j+1}q^{(r_l-r_j-1)/2}$}
    \put(9,1){$\sim\exp\left\{\frac{N}{2\pi i}\left(-\li(\frac{w_k}{w_j})+\li(\frac{w_l}{w_j})\right)\right\}$.}
  \end{picture}}

\vspace{0.5cm}

For the endpoint of $J$ :

  {\setlength{\unitlength}{0.4cm}
  \begin{picture}(18,7)\thicklines
   \put(4.2,2.8){\vector(1,-1){2}}
%   \put(4,3){\line(1,1){2}}
   \put(2,5){\line(1,-1){1.8}}
   \dashline{0.5}(6,5)(4,3)
   \put(4,3){\vector(-1,-1){2}}
    \put(3.5,1){$r_j$}
    \put(6,2.5){$r_k$}
    \put(1,2.5){$r_m$}
%    \put(1,2.5){$r_m$}
    \put(6.3,5.3){0}
    \put(0,0.2){$r_j-r_m$}
    \put(5,0.2){$r_k-r_j$}
    \put(0,5.3){$r_k-r_m$}
    \put(9,3.5){: $\displaystyle\frac{(q^{-1})_{r_k-r_m}}{(q^{-1})_{r_k-r_j}}(-1)^{r_k+r_j+1}q^{-(r_k-r_j+1)/2}$}
    \put(9,1){$\sim\exp\left\{\frac{N}{2\pi i}\left(\li(\frac{w_m}{w_k})-\li(\frac{w_j}{w_k})\right)\right\}$,}
  \end{picture}}

  {\setlength{\unitlength}{0.4cm}
  \begin{picture}(18,7)\thicklines
   \put(3.8,2.8){\vector(-1,-1){1.8}}
   \put(4.2,3.2){\line(1,1){1.8}}
%   \put(2,5){\line(1,-1){2}}
   \dashline{0.5}(2,5)(4,3)
   \put(4,3){\vector(1,-1){2}}
    \put(3.5,1){$r_j$}
    \put(6,2.5){$r_k$}
    \put(3.5,4.5){$r_l$}
    \put(5,5.3){$r_k-r_l$}
    \put(0,0.2){$r_j-r_l$}
    \put(5.5,0.2){$r_k-r_j$}
    \put(1.3,5){0}
    \put(9,3.5){: $\displaystyle\frac{(q)_{r_k-r_l}}{(q)_{r_j-r_l}}(-1)^{r_l+r_j+1}q^{(r_j-r_l+1)/2}$}
    \put(9,1){$\sim\exp\left\{\frac{N}{2\pi i}\left(-\li(\frac{w_k}{w_l})+\li(\frac{w_j}{w_l})\right)\right\}$.}
  \end{picture}}

Note that the colored Jones polynomial is expressed by 
the products of various forms of the R-matrices of crossings or trivalent vertices of $G$ (with slight modification by the local maxima/minima)
and summed over all the possible indices $r_1,\ldots,r_{m+1}$ 
(see \cite{Murakami00a} for the calculation of the colored Jones polynomial; 
the description in \cite{Murakami00a} may look slightly different from ours, 
but removing the sides of the tangle diagram assigned with 0 in \cite{Murakami00a} gives the diagram $G$).
Now we define a potential function $\widetilde{W}(w_1,\ldots,w_{m+1})$ of the knot diagram by letting
the product of all formal substitutions of $G$ to be $\exp\left\{\frac{N}{2\pi i}\widetilde{W}(w_1,\ldots,w_{m+1})\right\}$.
One important property of $\widetilde{W}$ is that the variable $w_{m+1}$ assigned to the unbounded region
appears only in the numerator. Therefore, we can define another potential function
$W(w_1,\ldots,w_m):=\widetilde{W}(w_1,\ldots,w_m,0)$,\footnote{Note that $\li(0)=0$.}
which coincides with the potential function $W(w_1,\ldots,w_m)$ defined in Section \ref{ch32}.

For example, $\widetilde{W}$ and $W$ of Figure \ref{pic8} become
\begin{eqnarray*}
\lefteqn{\widetilde{W}(w_1,\ldots,w_5)=\left\{\li(\frac{1}{w_2})-\li(\frac{w_3}{w_2})\right\}
+\left\{\li(\frac{w_5}{w_3})-\li(\frac{1}{w_3})\right\}}\\
&&+\left\{-\li(\frac{w_5}{w_4})+\li(\frac{w_4}{w_2})+\li(\frac{w_5w_2}{w_4w_3})-\li(\frac{w_5}{w_3})+\li(\frac{w_3}{w_2})
-\frac{\pi^2}{6}+\log\frac{w_4}{w_2}\log\frac{w_3}{w_2}\right\}\\
&&+\left\{-\li(\frac{w_5}{w_1})+\li(\frac{w_1}{w_2})+\li(\frac{w_5w_2}{w_1w_4})-\li(\frac{w_5}{w_4})+\li(\frac{w_4}{w_2})
-\frac{\pi^2}{6}+\log\frac{w_1}{w_2}\log\frac{w_4}{w_2}\right\}\\
&&+\left\{-\li(w_5)+\li(\frac{1}{w_2})+\li(\frac{w_5w_2}{w_1})-\li(\frac{w_5}{w_1})+\li(\frac{w_1}{w_2})
-\frac{\pi^2}{6}+\log\frac{1}{w_2}\log\frac{w_1}{w_2}\right\},
\end{eqnarray*}
and
\begin{eqnarray*}
&&W(w_1,\ldots,w_4)=2\left\{\li(\frac{1}{w_2})+\li(\frac{w_4}{w_2})+\li(\frac{w_1}{w_2})\right\}-\li(\frac{1}{w_3})
-\frac{\pi^2}{2}\\
&&~~~+\log\frac{w_4}{w_2}\log\frac{w_3}{w_2}+\log\frac{w_1}{w_2}\log\frac{w_4}{w_2}
+\log\frac{1}{w_2}\log\frac{w_1}{w_2}.
\end{eqnarray*}
This potential function $W(w_1,\ldots,w_4)$ coincides with the one defined previously in (\ref{W}).

Note that using $W$ instead of $\widetilde{W}$ does not violate the formulation of the optimistic limit because,
for a solution $(w_1^{(0)},\ldots,w_m^{(0)})$ of
$\mathcal{H}_2=\left\{\exp\left(w_l\frac{\partial W}{\partial w_l}\right)=1\,\vert\, l=1,\ldots,m\right\}$,
$(w_1^{(0)},\ldots,w_m^{(0)},0)$ becomes a solution of
$\widetilde{\mathcal{H}}_2:=\left\{\exp\left(w_l\frac{\partial \widetilde{W}}{\partial w_l}\right)=1\,\vert\, l=1,\ldots,m+1\right\}$.
We are considering only the solutions of $\widetilde{\mathcal{H}}_2$ with the condition $w_{m+1}=0$ because
this condition corresponds to the collapsing process of tetrahedra of Thurston triangulation in Section \ref{ch22}
and the solutions correspond to the triangulation. However, other solutions with the condition $w_{m+1}\neq 0$
also have good geometric meanings and this will be discussed in later papers.

\subsection{Inessential solutions induced by essential solutions}\label{app2}

Let $\bold{z}$ and $\bold{w}$ be the solutions in Lemma \ref{lem12}.
In this Appendix, we determine the condition when an essential solution induces an inessential solution.
Note that solutions $\bold{z}$ and $\bold{w}$ uniquely determine 
shape parameters of ideal tetrahedra in Yokota triangulation and in Thurston triangulation, respectively,
and that, by definition, essential solution determines the shape parameters with none of them belonging to $\{0,1,\infty\}$.
Therefore, we focus on the shape parameters of each triangulation.
We call the set of shape parameters of ideal tetrahedra {\it essential} when no elements of it belongs to $\{0,1,\infty\}$.

Note that the shape parameters of two triangulations are determined by the local picture at each crossings
and that, from Observation \ref{obs}, what we have to consider are 3-2 moves and 4-5 moves at the crossings.
Consider the two cases of Figure \ref{pic17} and Figure \ref{cola}, which correspond to
4-5 move and 3-2 move, respectively, 
and for which we have the determining relations of shape parameters in (\ref{eq20}) and in (\ref{eq30}), respectively.

\begin{lem}\label{apple}
\begin{enumerate}
\item In Figure \ref{cola}, if $\{t_1,t_2,t_4\}$ is essential, then $\{u_1,u_2\}$ is essential. 
Conversely, if $\{u_1,u_2\}$ is essential, then $\{t_1,t_2,t_4\}$ is essential if and only if 
\begin{equation}\label{a0}u_1+u_2=1.\end{equation}

\item In Figure \ref{pic17}, if $\{t_1,t_2,t_3, t_4\}$ is essential, then $\{u_1,u_2, u_3, u_4, u_5\}$ is essential if and only if
\begin{equation}\label{a1}
t_1-\frac{1}{t_2}\neq 0,~\frac{1}{t_2}-t_3\neq 0,~t_3-\frac{1}{t_4}\neq 0,~\frac{1}{t_4}-t_1\neq 0, ~
~t_1-\frac{1}{t_2}+t_3-\frac{1}{t_4}\neq 0.
\end{equation}
(Note that $u_5=\frac{1}{u_1 u_3}=\frac{1}{u_2 u_4}$.) 
Conversely, if $\{u_1,u_2, u_3, u_4, u_5\}$ is essential, then $\{t_1,t_2,t_3, t_4\}$ is essential if and only if
\begin{eqnarray}\label{a2}
      \left\{
      \begin{array}{l}
     \frac{1}{u_1}+\frac{1}{u_2}-\frac{1}{u_1u_2}\neq u_5,\\
     u_2+u_3-u_2u_3\neq\frac{1}{u_5},\\
     \frac{1}{u_3}+\frac{1}{u_4}-\frac{1}{u_3u_4}\neq u_5,\\
     u_4+u_1-u_4u_1\neq\frac{1}{u_5}.
      \end{array}\right.
\end{eqnarray}
\end{enumerate}
\end{lem}

\begin{proof}
From the relations (\ref{eq30}) and (\ref{eq20}), if one of the sets $\{t_1,t_2,t_3, t_4\}$ and $\{u_1,u_2, u_3, u_4, u_5\}$ is essential,
then the shape parameters of the other set are expressed by products of nonzero and non-infinity numbers.
This implies any shape parameter in the other set cannot be zero nor infinity.
Therefore, what we have to check is the case when $t_k=1$ or $u_l=1$ for some $k,l$.

Consider Figure \ref{cola}. Assume $\{t_1,t_2,t_4\}$ is essential and $u_1=1$. Then from $u_1=t_1't_4''=1$, we obtain $t_1t_4=1$.
Using $t_1 t_2 t_4=1$, this induces $t_2=1$, which contradicts the essentiality of $\{t_1,t_2,t_4\}$. The case when $u_2=1$ is the same.

Conversely, assume $\{u_1,u_2\}$ is essential. By direct calculation from (\ref{eq30}), we obtain
$$t_1=u_1''u_2''=1\iff  u_1+u_2=1\iff t_2=u_1u_2'=1\iff t_4=u_1'u_2=1.$$

Now consider Figure \ref{pic17}. Assume $\{t_1,t_2,t_3, t_4\}$ is essential. 
Then direct calculation from (\ref{eq20}) shows (\ref{a1}) is equivalent to
$$u_2\neq 1,~u_3\neq 1,~u_4\neq 1,~u_1\neq 1,~u_5\neq 1.$$
For example, using $t_1t_3=\frac{1}{t_2t_4}$, we have
$$u_5=(t_1't_2''t_3't_4'')^{-1}=1\iff (1-\frac{1}{t_2})(1-\frac{1}{t_4})=(1-t_1)(1-t_3)\iff t_1-\frac{1}{t_2}+t_3-\frac{1}{t_4}=0.$$

Conversely, assume $\{u_1,u_2, u_3, u_4, u_5\}$ is essential.
Then direct calculation from (\ref{eq20}) shows (\ref{a2}) is equivalent to
$$t_1\neq1, ~t_2\neq1,~t_3\neq1,~t_4\neq1.$$

\end{proof}

From the above, if the essential solution $\bold{z}$ in Lemma \ref{lem12} 
determines the shape parameters of Yokota triangulation that satisfy the conditions (\ref{a1}) in Lemma \ref{apple},
then the corresponding solution $\bold{w}$ is also essential. Conversely, the essential solution $\bold{w}$ in Lemma \ref{lem12}
determines the shape parameters of Thurston triangulation that satisfy the conditions (\ref{a0}) and (\ref{a2}) in Lemma \ref{apple},
then the corresponding solution $\bold{z}$ is also essential. We expect these conditions hold for almost all cases. For example,
the essential solutions $\bold{z}$ and $\bold{w}$ of twist knots in \cite{Cho09b} and \cite{Cho10a},
and the geometric solutions $\bold{w}$ of the two-bridge knots in \cite{Ohnuki05} satisfy these conditions.
Furthermore, if every octahedron in the Yokota triangulation have one collapsed horizontal edge,
then the essential solution $\bold{z}$ always satisfies the condition.
Therefore, essential solutions $\bold{z}$ coming from the standard diagrams of 2-bridge knots in \cite{Ohnuki05} always induce
the essential solutions $\bold{w}$.

\vspace{5mm}
\begin{ack}
The authors show gratitude to Yoshiyuki Yokota for sending us his preprint in advance before publication.
  This work was started when the first author was visiting Waseda university with Grant-in-Aid for JSPS Fellows 21.09221 and 
  he is POSCO TJ Park fellow now.
  The first author was supported by Korea Research Foundation Grant funded by the Korean Government (KRF-2008-341-C00004) and
   the second author was supported by Grant-in-Aid for
Scientific Research no. 22540236.
\end{ack}

\bibliography{VolConj}
\bibliographystyle{abbrv}

{\sc
\begin{flushleft}
  School of Mathematics, Korea Institute for Advanced Study, 85 Hoegiro, Dongdaemun-gu, Seoul 130-722, Republic of Korea\\[5pt]
  Department of Mathematics, Faculty of Science and Engineering, Waseda University, 3-4-1 Okubo, Shinjuku-ku, Tokyo 169-8555, Japan\\
E-mail: dol0425@gmail.com\\
\mbox{\phantom{E-mail: }murakami@waseda.jp}
\end{flushleft}}
\end{document}